\numberwithin{equation}{section}
\numberwithin{figure}{section}
\numberwithin{equation}{section}
\numberwithin{figure}{section}
\numberwithin{equation}{section}
\newtheorem{theorem}{Theorem}
\newtheorem{lemma}[theorem]{Lemma}
\newtheorem{corollary}[theorem]{Corollary}
\newtheorem{proposition}[theorem]{Proposition}
\theoremstyle{definition}
\newtheorem{definition}[theorem]{Definition}
\newtheorem{example}[theorem]{Example}
\newtheorem*{acknowledgements*}{Acknowledgements}
\theoremstyle{remark}
\newtheorem{remark}[theorem]{Remark}
\numberwithin{theorem}{section}   
\thanks{The work of the first author was completed as a part of the implementation of the development program of the Volga Region Scientific and Educational Mathematical Center (agreement
no. 075-02-2023-931.}
\thanks{}
\thanks{}
\subjclass[2010]{Primary 46B70; Secondary 46B42, 15A15}
\keywords{Banach lattice, $s$-relative decomposable couples, relative decomposable couples, lower, upper estimates , interpolation, Calder\'{o}n-Mityagin property}
\begin{document}
\title[$S$-decomposable Banach lattices]{$S$-decomposable Banach lattices,
optimal sequence spaces and interpolation}
\author{Sergey V. Astashkin}
\address{Astashkin: Department of Mathematics, Samara National Research
University, Moskovskoye shosse 34, 443086, Samara, Russian Federation;
Department of Mathematics, Bahcesehir University, 34353, Istanbul, Turkey}
\email{astash56@mail.ru}
\author{Per G. Nilsson}
\address{Nilsson: Stockholm, Sweden}
\email{pgn@plntx.com}
\date{\today }

\begin{abstract}
We investigate connections between upper/lower estimates for Banach lattices
and the notion of relative $s$-decomposability, which has roots in
interpolation theory. To get a characterization of relatively $s$%
-decomposable Banach lattices in terms of the above estimates, we assign to
each Banach lattice $X$ two sequence spaces $X_{U}$ and $X_{L}$ that are
largely determined by the set of $p$, for which $l_{p}$ is finitely lattice
representable in $X$. As an application, we obtain an orbital factorization
of relative $K$-functional estimates for Banach couples $\vec{X}%
=(X_{0},X_{1})$ and $\vec{Y}=(Y_{0},Y_{1})$ through some suitable couples of
weighted $L_{{p}}$-spaces provided if $X_{i},Y_{i}$ are relatively $s$%
-decomposable for $i=0,1$.

Also, we undertake a detailed study of the properties of optimal upper and
lower sequence spaces $X_{U}$ and $X_{L}$, and, in particular, prove that
these spaces are rearrangement invariant. In the Appendix, a description of
the optimal upper sequence space for a separable Orlicz space as a certain
intersection of some special Musielak-Orlicz sequence spaces is given.
\end{abstract}

\maketitle

\section{Introduction}

\label{Intro}

This paper has roots in the classification problem of the interpolation
theory of operators, see for instance Peetre \cite{Pee71}, i.e., the problem
of identification of equivalence classes of Banach couples with the "same"
interpolation structure\footnote{%
For standard definitions and notation used in the interpolation theory, see
e.g. \cite{BeLo76},\cite{BK91}, \cite{BrKeSe88}, \cite{Pee71}.}.
Specifically, there are close connections of the topic of this paper with
the so-called Calderón-Mityagin property of Banach couples, which often
allows to describe effectively the class of all interpolation spaces with
respect to them. Let us recall this notion. $\medskip $

Assuming that $\overrightarrow{X}=(X_{0},X_{1})$ and $\overrightarrow{Y}%
=(Y_{0},Y_{1})$ are Banach couples, we consider the following two properties
of elements $x\in X_{0}+X_{1}$ and $y\in Y_{0}+Y_{1}$:

\begin{equation}
y=Tx\text{ for some bounded linear operator }T:\,X_{i}\rightarrow
Y_{i},\;i=0,1,  \label{kmon1}
\end{equation}%
and 
\begin{equation}
K(t,y;\overrightarrow{Y})\leq C\cdot K(t,x;\overrightarrow{X})\text{ for
some constant }C\text{ and }t>0.  \label{kmon2}
\end{equation}%
Here, $K(t,x;\overrightarrow{X})$ is the Peetre $K$-functional defined for
all $x\in X_{0}+X_{1}$ and $t>0$ by 
\begin{equation*}
K(t,x;\overrightarrow{X}):=\inf \{\Vert x_{0}\Vert _{X_{0}}+t\Vert
x_{1}\Vert _{X_{1}}:\,x=x_{0}+x_{1},\,x_{i}\in X_{i}\}.
\end{equation*}

It is easy to check that condition \eqref{kmon1} implies \eqref{kmon2} with $%
C=\max_{i=0,1}\Vert T\Vert _{{X_{i}}\rightarrow {Y_{i}}}$. If the converse
implication holds (for all $x\in X_{0}+X_{1}$ and $y\in Y_{0}+Y_{1}$), the
couples $\overrightarrow{X}$ and $\overrightarrow{Y}$ are said to have the 
\textit{relative Calderón-Mityagin property} (in brief, \textit{$\mathcal{C-M%
}$ property}). In this case all relative interpolation spaces with respect
to these couples can be described by using $K$-functionals in a precise
quantitative manner, see for instance \cite[Theorem 4.1.11]{BK91}.

$\medskip $

A result of this type related to the finite dimensional couples $\left(
l_{1}^{n},l_{\infty }^{n}\right) $ goes back to Hardy-Littlewood-Polya, see 
\cite[Theorem 5.5]{Sch12}. As the name of the term suggests, the first
example of a $\mathcal{C-M}$ couple in the non-discrete infinite dimensional
setting was obtained independently by Calderón \cite{Cal66} and Mityagin 
\cite{Mit65}. Their result that the Banach couple $(L_{1},L_{\infty })$ over
an arbitrary $\sigma $-finite measure space has the $\mathcal{C-M}$ property
marked the start of searching for other such Banach couples. Early examples
are pairs of weighted $L_{\infty }$-spaces, Peetre \cite{Pee71}, and $L_{1}$%
-spaces, Sedaev-Semenov, \cite{SeSe71}. Over time, this property has been
verified for a large number of Banach couples (and also quasi-Banach
couples, couples of normed abelian groups); cf. \cite[Sec. 4.7.2]{BK91}, 
\cite[p. 2 onwards]{CwNiSc03}, \cite{ACN-23}.\textbf{\ $\medskip $ }$%
\medskip $

In the paper \cite{Cwi84}, Cwikel have found a general condition, which
guarantees that two given couples of Banach lattices have the relative $%
\mathcal{C-M}$ property. Let $X$ and $Y$ be Banach lattices of measurable
functions (possibly having different underlying measure spaces). Then $X,Y$
are called \textit{relatively decomposable}\textbf{\ }if for any sequences $%
\{x_{n}\}_{n=1}^{\infty }\subset X$ and $\{y_{n}\}_{n=1}^{\infty }\subset Y$
of elements with pair-wise disjoint supports such that $\sum_{n=1}^{\infty
}x_{n}\in X$ and $\left\Vert y_{n}\right\Vert _{Y}\leq \left\Vert
x_{n}\right\Vert _{X}$, $n\in N$, we have $\sum_{n=1}^{\infty }y_{n}\in Y$
and 
\begin{equation}
\left\Vert \sum_{n=1}^{\infty }y_{n}\right\Vert _{Y}\leq D\left\Vert
\sum_{n=1}^{\infty }x_{n}\right\Vert _{X},  \label{reldec}
\end{equation}%
for some constant $D$ independent of $\{x_{n}\}_{n=1}^{\infty }$ and $%
\{y_{n}\}_{n=1}^{\infty }$. As is proved in \cite{Cwi84} (see also \cite[%
Theorem 4.4.29]{BK91}), couples of Banach lattices $\overrightarrow{X}%
=(X_{0},X_{1})$ and $\overrightarrow{Y}=(Y_{0},Y_{1})$ have the relative
Calderón-Mityagin property whenever both pairs $X_{0}$, $Y_{0}$ and $X_{1}$, 
$Y_{1}$ are relatively decomposable.$\medskip $

Let us mention that the latter notion is closely connected with $L_{p}$%
-spaces. In particular, by an appropriate form of the
Kakutani-Bohnenblust-Tzafriri representation theorem, any decomposable
lattice $X$ of measurable functions on $\Omega $ (i.e., when $X=Y$), which
is also $\sigma $-order continuous and has the Fatou property, coincides,
for some $p\in \lbrack 1,\infty )$, with an $L_{p}$-space of functions
supported on some measurable subset $\Omega ^{\prime }$ of $\Omega $ for
some suitable measure defined on $\Omega ^{\prime }$ (see Proposition 1.4 in 
\cite[p. 58]{CwNi84}). Observe also that there is a simple sufficient
condition for couples of Banach lattices to\textbf{\ }be relatively
decomposable. Namely, if $Y$ satisfies an upper $p$-estimate and $X$ a lower 
$p$-estimate, then the Banach lattices $X$ and $Y$ are relative decomposable.%
$\medskip $

The notion of relative decomposibility plays a central role in the paper 
\cite{CwNiSc03}, where it was proved that all weighted couples modelled on
two given couples $\overrightarrow{X}=(X_{0},X_{1})$ and $\overrightarrow{Y}
=(Y_{0},Y_{1})$ of Banach lattices of measurable functions over $\sigma $
-finite measure spaces possess the relative $\mathcal{C-M}$ property if and
only if both pairs $X_{0},Y_{0}$ and $X_{1},Y_{1}$ are relatively
decomposable. Therefore, in the case when $\overrightarrow{X}= 
\overrightarrow{Y}$ and $X_{0},X_{1}$ are $\sigma $-order continuous and
have the Fatou property, the latter property of a couple $\overrightarrow{X}$
implies that both $X_{0}$ and $X_{1}$ are $L_{p}$-spaces, which is a strong
converse to the well-known result of Sparr \cite{Spa78}, asserting that each
weighted $L_{p}$-couple has the relative $\mathcal{C-M}$ property. $\medskip$

Of course, not all pairs of Banach couples possess the relative $\mathcal{C-M%
}$ property and this motivated Cwikel to consider a weakened version of
that. Specifically, already in the papers \cite{Cwi76} and \cite{Cwi81},
condition \eqref{kmon2} is replaced with the inequality 
\begin{equation}
K(t,y;\overrightarrow{Y})\leq w(t)K(t,x;\overrightarrow{X}),\;t>0,\;\;%
\mbox{with}\;\int\nolimits_{0}^{\infty }w(u)^{s}du/u<\infty ,  \label{wCM}
\end{equation}%
for some (fixed) function $w(u)\ge 0$ and $s\in \lbrack 1,\infty ]$ (if $%
s=\infty $, after the usual modification of the condition imposed on $w$, we
come certainly to the definition of relative $\mathcal{C-M}$ property). In
turn, this led to the introduction of the following more general concept of
\thinspace relatively $s$-decomposable pairs of Banach lattices.$\medskip $

Let $1\leq s\leq \infty $. A pair of Banach lattices $X$, $Y$ is said to be 
\textit{relatively }$s$-\textit{decomposable}\textbf{\ }whenever for all
sequences $\{x_{n}\}_{n=1}^{\infty }\subset X$, $\{y_{n}\}_{n=1}^{\infty
}\subset Y$ of pair-wise disjoint elements such that $\sum_{n=1}^{\infty
}x_{n}\in X$ and $\left\Vert y_{n}\right\Vert _{Y}\leq \left\Vert
x_{n}\right\Vert _{X}$, $n\in N$, we have $\sum_{n=1}^{\infty }y_{n}\in Y$
and 
\begin{equation}
\left\Vert \sum_{n=1}^{\infty }\lambda _{n}y_{n}\right\Vert _{Y}\leq D\left(
\sum_{n=1}^{\infty }\left\vert \lambda _{n}\right\vert ^{s}\right)
^{1/s}\left\Vert \sum_{n=1}^{\infty }x_{n}\right\Vert _{X},  \label{frr}
\end{equation}%
for some constant $D$ and every sequence $\{\lambda _{n}\}_{n=1}^{\infty
}\in l_{s}$ (again with the usual modification in the case $s=\infty $ that
gives \eqref{reldec}, i.e., the relative decomposibility). According to \cite%
{Cwi84} (see also\textbf{\ }\cite[Remark 4.4.33]{BK91}), if $\overrightarrow{%
X}=(X_{0},X_{1})$ and $\overrightarrow{Y}=(Y_{0},Y_{1})$ are two couples of
Banach lattices such that the pairs $X_{0}$, $Y_{0}$ and $X_{1}$, $Y_{1}$
are relatively $s$-decomposable, for every $x\in X_{0}+X_{1}$ and $y\in
Y_{0}+Y_{1}$ satisfying condition \eqref{wCM} we have $y=Tx$ for some linear
operator $T:X_{i}\rightarrow Y_{i}$, $i=0,1$. $\medskip $

One of the main results of the paper \cite{CwNiSc03} is a characterization
of the relative decomposability in the setting of Banach lattices of
measurable functions, implying that the above-mentioned trivial sufficient
condition expressed in terms of upper and lower estimates is also necessary.
In the case of relative $s$-decomposable pairs of Banach lattices $X$, $Y$
there is also a simple sufficient condition, formulated in terms of upper
estimates for $Y$ and lower estimates for $X.$ The main aim \textbf{\ }of
this paper is to prove that, in a more general setting of abstract Banach
lattices, this trivial sufficient condition is also necessary (as in the
case of relative decomposability, i.e., when $s=\infty $). $\medskip $

A pivotal role in the proof of our main result is played by the notions of
optimal upper and lower sequence spaces, introduced in this paper.
Specifically, we associate to every Banach lattice $X$ two sequence spaces $%
X_{U}$ and $X_{L}$, which rather precisely reflect lattice properties of $X$%
, in particular, encoding the optimal upper and lower estimate information,
respectively. Section \ref{Sec-optimal-spaces} is devoted to a detailed
study of the properties of these spaces, which, as we believe, can be useful
tools also when considering other issues related to Banach lattices. As a
result, in Section \ref{Proof_Main_theorem} we present the proof of Theorem %
\ref{Th_main}, which gives a solution of the above problem. On the way, we
obtain also other results related to optimal sequence spaces. We show\textbf{%
\ t}hat if Banach lattices $X$ and $Y$ are relatively $s$-decomposable, then
the space of multiplicators from $X_{L}$ into $Y_{U}$ with respect to
coordinate-wise multiplication includes the space $l_{s}$ (see Proposition %
\ref{Prop_rel_decomp_mult}). Another important ingredient in the proof of
Theorem \ref{Th_main} is the relationship between the construction of
optimal sequence spaces $X_{U}$ and $X_{L}$ and the finite lattice
representability of $l_{r}$-spaces in $X$.$\medskip $

Section \ref{Inter} contains some applications of Theorem \ref{Th_main} to
the interpolation theory. In particular, in Theorem \ref{factor3} we prove
that Banach lattices $X$ and $Y$ being relatively $s$-decomposable admit an
orbital factorization of relative $K$-functional estimates through some
suitable couples of weighted $L_{{p}}$-spaces. In the next section we
present the full proof of rearrangement invariance of the optimal upper and
lower sequence spaces (see Theorem \ref{Th_XL_XU_Prop}). $\medskip $

Finally, in the Appendix we identify the optimal upper sequence space for a
separable Orlicz space $L_{M}$ on $[0,1]$. Namely, in Theorem \ref{Th. Oricz}%
, we prove that the space $(L_{M})_{U}$ can be described as a certain
intersection of some special Musielak-Orlicz sequence spaces. $\medskip $

It is worth noting that, in contrast to \cite{Cwi84} and \cite{BK91}, in
this paper we use a weaker version of \textit{finite} relative $s$%
-decomposibility, which involves estimates \eqref{frr} only for finite
sequences. In a certain sense, this approach seems to be more natural, since
our main result reveals the relationship between this property and
upper/lower estimates of Banach lattices involved, whose definition contains
only finite sequences of elements of these lattices as well. Observe that
the above versions of the definition of relative $s$-decomposibility
coincide if $X$, $Y$ are Banach lattices of measurable functions such that $Y
$ has the Fatou property\footnote{%
A Banach lattice $X$ of measurable functions on a $\sigma $-finite measure
space $\left( T,\Sigma ,\mu \right) $ is said to have the \textit{Fatou
property} if the conditions $x_n\in X$, $n=1,2,\dots$, $\sup_{n=1,2,\dots}%
\|x_n\|_X<\infty$ and $x_n\to x$ a.e. on $T$ imply that $x\in X$ and $%
\|x\|_X\le \liminf_{n\to\infty}\|x_n\|_X$.} (in this case only, the concept
of relative $s$-decomposibility is applied in \cite{Cwi84} (see also \cite%
{BK91}) to the study of interpolation properties of Banach couples).

Some remarks around the history of this paper. Already in \cite[Theorem 1.3, p.~98]{CwNiSc03} the classification of decomposable pairs of Banach lattices on
measure spaces was addressed. In fact, the corresponding results for the
general case were also announced there  (see \cite[p.~100]{CwNiSc03}). This
problem was presented to the second author by Michael Cwikel back in 2003.
Some preliminary results were presented by the second author during Jaak
Peetre's "Summer Seminar" in Lund, 2003. Time flies, and resulted the in the
present paper 20 years later. The authors thank Michael Cwikel for his
insight and contributions to this paper.

\vskip0.3cm

\section{\label{Sec-definitions}Some preliminaries and statements of the
main results.}

We will assume that the reader is familiar with the definition of Banach
lattices, their basic properties and some basic terminology (see, for
instance, \cite{LT79},\cite{MeNi91},\cite{Sch12}). In particular, two
elements $x,y$ from a Banach lattice $X$ are said to be \textit{disjoint }
if they satisfy $\left\vert x\right\vert \wedge \left\vert y\right\vert =0$.
For a given Banach space (lattice), we set $B_{X}:=\left\{ x\in
X:\,\left\Vert x\right\Vert _{X}\leq 1\right\} $ and $S_{X}:=\left\{ x\in
X:\,\left\Vert x\right\Vert _{X}=1\right\} $. $\medskip $

Recall that a lattice $X$ is said to be \textit{$\sigma $-order complete} if
every order bounded sequence in $X$ has a least upper bound (see e.g. \cite[%
Definition~1.a.3]{LT79}). A Banach lattice has a \textit{$\sigma $-order
continuous norm} if every positive, increasing, norm bounded sequence
converges in norm (see e.g. \cite[Definition~5.12]{Sch12}, \cite[%
Definition~1.a.6]{LT79}).$\medskip $

Let $l_{p}\left( I\right) $ denote the Banach space of all sequences,
indexed by the set $I,$ which are absolutely $p$-summable if $1\leq p<\infty 
$ (resp. bounded if $p=\infty $). In the case $I=\mathbb{N}$ we simply write 
$l_{p}$. As usual, by $c_{0}=c_{0}(\mathbb{N})$ will be denoted the space of
all sequences tending to zero as $n\rightarrow \infty $. For definiteness,
all Banach spaces and lattices considered in this paper are assumed to be
real. $\medskip $

Let $F_{1}$ and $F_{2}$ be two positive functions (quasinorms). We write $%
F_{1}\preceq F_{2}$ if we have $F_{1}\leq CF_{2}$ for some positive constant 
$C$ that does not depend on the arguments of $F_{1}$ and $F_{2}$. In the
case when both $F_{1}\preceq F_{2}$ and $F_{2}\preceq F_{1}$ we write $%
F_{1}\asymp F_{2}$. For a finite set $E\subset \mathbb{N}$ we denote by $|E|$
cardinality of $E$. Finally, if $F:\,\Omega \rightarrow \mathbb{R}$ is a
function (resp. $a=\left\{ a_{i}\right\} _{i=1}^{\infty }$ is a sequence of
real numbers), then $\mathrm{supp}\,F:=\{\omega \in \Omega :\,F(\omega )\neq
0\}$ (resp. $\mathrm{supp}\,a:=\{i\in \mathbb{N}:\,a_{i}\neq 0\}$).

\subsection{Relative decomposability of Banach lattices}

The following definition generalizes the first of two definitions in \cite[%
p. 44]{Cwi84} (see also \cite[Definition 4.4.26, p. 597]{BK91}) and has
roots in the interpolation theory of operators (see e.g. \cite[Theorem~2]%
{Cwi84}, \cite[Theorem~4.4.29, p. 598]{BK91} and Section \ref{Intro}).
However, as was already mentioned in Section \ref{Intro}, in contrast to 
\cite{Cwi84} (and also to \cite{CwNiSc03}, where the case $s=\infty $ is
covered), we will use a weaker version of this notion, which involves only
finite sums of elements of given lattices.

\begin{definition}
\label{Def-Rel-Decomposable}Let $1\leq s\leq \infty .$ Banach lattices $X $
and $Y$ are said to be \textit{(finitely) relative $s$-decomposable } if
there exists a constant $D>0$ such that for each $n\in \mathbb{N}$ and for
all sequences of pair-wise disjoint non-zero elements $\left\{ x_{i}\right\}
_{i=1}^{n}\subset X$ and $\left\{ y_{i}\right\} _{i=1}^{n}\subset Y$ it holds%
\begin{equation*}
\left\Vert \sum_{i=1}^{n}y_{i}\right\Vert _{Y}\leq D\left(
\sum_{i=1}^{n}\left( \left\Vert y_{i}\right\Vert _{Y}/\left\Vert
x_{i}\right\Vert _{X}\right) ^{s}\right) ^{1/s}\left\Vert
\sum_{i=1}^{n}x_{i}\right\Vert _{X}
\end{equation*}%
(with the usual modification in the case $s=\infty$). Let $D_{s}=D_{s}\left(
X,Y\right) $ denote the infimum of constants $D$ satisfying the above
condition and we refer to $D_{s}$ as the \textit{relative $s$%
-decomposibility constant }of $X$ and $Y.$
\end{definition}

In what follows, we will suppress the word "finitely", although this is a
change of terminology as compared to the previous use of this term, both in 
\cite{Cwi84} and \cite[Definition 2.2.16]{BK91} (see also a related
discussion in Section \ref{Intro}).

Definition \ref{Def-Rel-Decomposable} can be also stated equivalently as
follows. Let $\left\{ x_{i}\right\} _{i=1}^{n}\subseteq S_{X},\left\{
y_{i}\right\} _{i=1}^{n}\subseteq S_{Y}$ be two sequences of pair-wise
disjoint elements. Then for all sequences $\left\{ a_{i}\right\} _{i=1}^{n}$
and $\left\{ b_{i}\right\} _{i=1}^{n}$ of scalars it holds 
\begin{equation*}
\left\Vert \sum_{i=1}^{n}a_{i}b_{i}y_{i}\right\Vert _{Y}\leq D\left(
\sum_{i=1}^{n}\left\vert a_{i}\right\vert ^{s}\right) ^{1/s}\left\Vert
\sum_{i=1}^{n}b_{i}x_{i}\right\Vert _{X}.
\end{equation*}

Note that in the case $s=\infty $ Definition \ref{Def-Rel-Decomposable}
reduces to the following: if $\left\{ x_{i}\right\} _{i=1}^{n}\subseteq
X,\left\{ y_{i}\right\} _{i=1}^{n}\subseteq Y$ are two sequences of
pair-wise disjoint elements with $\left\Vert y_{i}\right\Vert _{Y}\leq
\left\Vert x_{i}\right\Vert _{X}$, $i=1,\dots ,n$, then%
\begin{equation*}
\left\Vert \sum_{i=1}^{n}y_{i}\right\Vert _{Y}\leq D\left\Vert
\sum_{i=1}^{n}x_{i}\right\Vert _{X}
\end{equation*}%
Following \cite{CwNiSc03}, we will say in this case that Banach lattices $X$
and $Y$ are \textit{relatively decomposable} (see Section \ref{Intro}). $%
\medskip $

In particular, by Hölder inequality, we have

\begin{example}
\label{ex1} If $1\leq p,q,s\leq \infty $ then 
\begin{equation*}
D_{s}\left( l_{q},l_{p}\right) <\infty \iff \frac{1}{p}\leq \frac{1}{q}+%
\frac{1}{s}
\end{equation*}%
and $D_{s}\left( l_{q},l_{p}\right) =1$ whenever this constant is finite.
\end{example}

\subsection{Upper and lower estimates for disjoint elements in Banach
lattices and the Grobler-Dodds indices}

\label{estimates}

Let us start with recalling the notions of lower and upper estimates in
Banach lattices; see \cite[Definition 1.f.4]{LT79}. A Banach lattice $X$ is
said to \textit{satisfy an upper (resp. a lower) $p$-estimate}, where $p\in %
\left[ 1,\infty \right] $, if for some constant $M$ and all finite sequences
of pair-wise disjoint elements $\left\{ x_{i}\right\} _{i=1}^{n}\subseteq X$
it holds

\begin{equation*}
\left\Vert \sum_{i=1}^{n} x_{i} \right\Vert _{X}\leq M\left(
\sum_{i=1}^{n}\left\Vert x_{i}\right\Vert _{X}^{p}\right) ^{1/p}
\end{equation*}%
(resp. 
\begin{equation*}
\left( \sum_{i=1}^{n}\left\Vert x_{i}\right\Vert _{X}^{p}\right) ^{1/p}\leq
M\left\Vert \sum_{i=1}^{n} x_{i} \right\Vert).
\end{equation*}

The infimum of all $M$, satisfying the above inequality, is denoted by $M^{%
\left[ p\right] }\left( X\right) $ and $M_{\left[ p\right] }\left( X\right) $%
, respectively. Note that every Banach lattice admits (trivially) an upper $%
1 $-estimate and a lower $\infty $-estimate.$\medskip $

The fact that a Banach lattice satisfies an upper or a lower $p$-estimate
can be equivalently expressed in terms of relative decomposability. More
explicitly, since $M_{\left[ p\right] }\left( l_{p}\right) =M^{\left[ p%
\right] }\left(l_{p}\right)=1$, $1\leq p\leq \infty $, one can immediately
check the following (see also \cite{CwNiSc03}):

\begin{proposition}
\label{Prop_rel_dec_lp sp} Let $X$ be a Banach lattice and $1\leq p\leq
\infty.$ We have:

$\left( i\right)$ $X$ satisfies an upper $p$-estimate if and only if $l_{p}$
and $X$ are relatively decomposable;

$\left( ii\right)$ $X$ satisfies a lower $p$-estimate if and only if $X$ and 
$l_{p}$ are relatively decomposable.

Moreover, we have 
\begin{equation*}
M^{\left[ p\right] }\left( X\right) =D_{\infty }\left( l_{p},X\right) ,M_{%
\left[ p\right] }\left( X\right) =D_{\infty }\left( X,l_{p}\right) .
\end{equation*}
\end{proposition}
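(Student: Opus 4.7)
The plan is to exploit the identity $\|\sum_i a_i e_{n_i}\|_{l_p} = (\sum_i |a_i|^p)^{1/p}$ for pairwise disjointly supported standard basis vectors in $l_p$: this makes $l_p$ the canonical Banach lattice in which matching upper and lower $p$-estimates hold with constant $1$, so the right-hand sides of the upper/lower $p$-estimate inequalities in $X$ can always be realized as genuine $l_p$-norms of suitable test sequences. The whole proof is a translation between two reformulations; the standard basis vectors (scaled appropriately) will supply the bridge in each direction.

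For $(i)$, I would first prove $D_\infty(l_p,X) \le M^{[p]}(X)$. Given pairwise disjoint $\{x_i\}_{i=1}^n \subset l_p$ and pairwise disjoint $\{y_i\}_{i=1}^n \subset X$ with $\|y_i\|_X \le \|x_i\|_{l_p}$, apply the upper $p$-estimate to $\{y_i\}$ and bound $(\sum \|y_i\|_X^p)^{1/p} \le (\sum \|x_i\|_{l_p}^p)^{1/p} = \|\sum x_i\|_{l_p}$, where the final equality uses disjointness of the $x_i$ in $l_p$. For the reverse inequality, given disjoint $\{y_i\} \subset X$, choose the test sequence $x_i := \|y_i\|_X e_i \in l_p$; these are pairwise disjoint with $\|x_i\|_{l_p} = \|y_i\|_X$, and relative decomposability of the pair $(l_p, X)$ yields $\|\sum y_i\|_X \le D_\infty(l_p,X) (\sum \|y_i\|_X^p)^{1/p}$, which is an upper $p$-estimate with constant $D_\infty(l_p,X)$.

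For $(ii)$, I would proceed symmetrically. To show $D_\infty(X, l_p) \le M_{[p]}(X)$, given disjoint $\{x_i\} \subset X$ and disjoint $\{y_i\} \subset l_p$ with $\|y_i\|_{l_p} \le \|x_i\|_X$, use $\|\sum y_i\|_{l_p} = (\sum \|y_i\|_{l_p}^p)^{1/p} \le (\sum \|x_i\|_X^p)^{1/p}$ and apply the lower $p$-estimate to the right-hand side. For the converse, given disjoint $\{x_i\} \subset X$, test against $y_i := \|x_i\|_X e_i \in l_p$, which are pairwise disjoint with $\|y_i\|_{l_p} = \|x_i\|_X$; relative decomposability of $(X, l_p)$ delivers $(\sum \|x_i\|_X^p)^{1/p} = \|\sum y_i\|_{l_p} \le D_\infty(X, l_p) \|\sum x_i\|_X$, i.e., a lower $p$-estimate with constant $D_\infty(X, l_p)$.

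I do not expect any substantive obstacle: in each part the two directions give opposite inequalities between the two constants, forcing the claimed equalities $M^{[p]}(X) = D_\infty(l_p,X)$ and $M_{[p]}(X) = D_\infty(X,l_p)$. The only minor point of care is that Definition \ref{Def-Rel-Decomposable} requires the test elements to be nonzero, which is automatic for the standard basis vectors and can be arranged on the $X$-side by discarding any $y_i$ or $x_i$ of zero norm before constructing the test sequence.
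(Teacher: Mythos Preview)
Your proof is correct and follows exactly the approach the paper indicates: the paper does not spell out a proof but merely remarks that, since $M_{[p]}(l_p)=M^{[p]}(l_p)=1$, the equivalence can be ``immediately checked,'' which is precisely the identity $\|\sum a_i e_{n_i}\|_{l_p}=(\sum|a_i|^p)^{1/p}$ you exploit in both directions.
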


A direct application of the above definitions (see also Example \ref{ex1})
gives the following useful result.

\begin{proposition}
\label{Prop_estimates_decomp} Assume that $X$ and $Y$ are Banach lattices
such that $X$ satisfies a lower $q$-estimate and $Y$ satisfies an upper $p$%
-estimate, where $1\leq q,p\leq \infty $. If ${1}/{p}\leq {1}/{q}+{1}/{s}$,
then $X,Y$ are relatively $s$-decomposable and the following estimate holds: 
\begin{equation*}
D_{s}\left( X,Y\right) \leq M_{\left[ q\right] }\left( X\right) M^{\left[ p%
\right] }\left( Y\right) .
\end{equation*}
\end{proposition}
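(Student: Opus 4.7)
The plan is to combine the two estimate hypotheses through a Hölder-type inequality, after first reducing to the borderline case $1/p = 1/q + 1/s$.

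\textbf{Reduction.} Suppose $1/p < 1/q + 1/s$. Set $p' := (1/q + 1/s)^{-1}$, so that $p' \le p$ and $1/p' = 1/q + 1/s$. For a finite nonnegative sequence $(a_i)$ the $\ell_{r}$-norm is nonincreasing in $r$, hence $(\sum \|y_i\|_Y^{p})^{1/p} \le (\sum \|y_i\|_Y^{p'})^{1/p'}$ for any finite disjoint sequence $\{y_i\}\subset Y$. Consequently, an upper $p$-estimate for $Y$ implies an upper $p'$-estimate with $M^{[p']}(Y) \le M^{[p]}(Y)$. So it suffices to treat the case $1/p = 1/q + 1/s$, and the final inequality will follow with the announced constant.

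\textbf{Main chain of inequalities.} Fix $n\in\mathbb{N}$ and pair-wise disjoint non-zero sequences $\{x_i\}_{i=1}^n \subset X$ and $\{y_i\}_{i=1}^n \subset Y$. Applying the upper $p$-estimate for $Y$ and writing $\|y_i\|_Y = (\|y_i\|_Y/\|x_i\|_X)\cdot \|x_i\|_X$,
\begin{equation*}
\Bigl\| \sum_{i=1}^n y_i \Bigr\|_Y
\le M^{[p]}(Y)\left( \sum_{i=1}^n \bigl(\|y_i\|_Y/\|x_i\|_X\bigr)^{p}\,\|x_i\|_X^{p}\right)^{1/p}.
\end{equation*}
Next, since $1/p = 1/q + 1/s$, the generalized Hölder inequality (applied to the sequences $a_i := \|y_i\|_Y/\|x_i\|_X$ and $b_i := \|x_i\|_X$) gives
\begin{equation*}
\left(\sum_{i=1}^n (a_i b_i)^p\right)^{1/p}
\le \left(\sum_{i=1}^n a_i^s\right)^{1/s}\left(\sum_{i=1}^n b_i^q\right)^{1/q}.
\end{equation*}
Finally, the lower $q$-estimate for $X$ yields $\bigl(\sum_{i=1}^n \|x_i\|_X^q\bigr)^{1/q} \le M_{[q]}(X)\,\bigl\|\sum_{i=1}^n x_i\bigr\|_X$. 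Chaining the three bounds produces
\begin{equation*}
\Bigl\| \sum_{i=1}^n y_i \Bigr\|_Y
\le M^{[p]}(Y)\,M_{[q]}(X)\left(\sum_{i=1}^n (\|y_i\|_Y/\|x_i\|_X)^{s}\right)^{1/s} \Bigl\|\sum_{i=1}^n x_i\Bigr\|_X,
\end{equation*}
which is exactly Definition~\ref{Def-Rel-Decomposable} with $D = M_{[q]}(X)\,M^{[p]}(Y)$.

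\textbf{Boundary cases.} The argument covers the extreme values $p=\infty$, $q=1$, or $s=\infty$ after the usual modification: Hölder becomes the trivial bound $\sum a_i b_i \le (\max_i a_i)\sum b_i$ (or similar), and the relevant upper/lower $p$- or $q$-estimate reduces to the triangle inequality or a trivial identity. I do not expect a real obstacle here; the only thing worth checking carefully is that the monotonicity step producing $M^{[p']}(Y) \le M^{[p]}(Y)$ is valid in all these boundary situations, which it is, because it is an entrywise comparison of $\ell_r$-norms of a finite sequence.
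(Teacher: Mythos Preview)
Your proof is correct and is precisely the ``direct application of the above definitions'' that the paper refers to (the paper does not spell out a proof, merely pointing to the definitions and Example~\ref{ex1}). The reduction to $1/p=1/q+1/s$ via monotonicity of $M^{[p]}(Y)$ in $p$, followed by upper $p$-estimate $\Rightarrow$ H\"older $\Rightarrow$ lower $q$-estimate, is exactly the intended argument.
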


From the latter proposition and the trivial fact that every Banach lattice
satisfies an upper $1$-estimate and a lower $\infty $-estimate it follows
that any pair of Banach lattices is relatively $1$-decomposable. Hence,
given a pair of Banach lattices $X$ and $Y$, the set of all $s\in \left[
1,\infty \right] $ such that they are $s$-decomposable is always non-empty
and is of the form either~$\left[ 1,s_{\max }\right] $ or $\left[ 1,s_{\max
}\right) $, where 
\begin{equation*}
s_{\max }=s_{\max }(X,Y):=\sup \{s\in \left[ 1,\infty \right] :\,X,Y%
\mbox{are $s$-decomposable}\}.
\end{equation*}%
Taking $X=l_{p}$, $Y=l_{1}$ and $s_{\max }$ so that $1/s_{\max }+1/p=1$, we
obtain an example of the first type because of $X$ and $Y$ are $s$%
-decomposable if and only if $s\in \left[ 1,s_{\max }\right] $ (see Example %
\ref{ex1}). To get an example of the second type, let $Y$ be a Banach
lattice that satisfies, for a given $p_{\max }\in \left[ 1,\infty \right] $,
an upper $p$-estimate for all $p<p_{\max }$ but not for $p=p_{\max }.$ Then,
taking $l_{\infty }$ for $X$, we see that $X$ and $Y$ are $s$-decomposable
if and only if $s<p_{\max }$ (see Proposition \ref{Prop_rel_dec_lp sp}(i)). $%
\medskip $

Recall that the Grobler-Dodds indices $\delta(X)$ and $\sigma (X)$ of a
Banach lattice $X$ are defined by 
\begin{equation*}
\delta(X):=\sup \{p\geq 1:\,X\;\text{satisfies an upper $p$-estimate}\}
\end{equation*}%
and 
\begin{equation*}
\sigma (X):=\inf \{q\geq 1:\,X\;\text{satisfies a lower $q$-estimate}\}.
\end{equation*}
For every infinite-dimensional Banach lattice $X$ we have $1\leq
\delta(X)\leq\sigma (X)\leq \infty $. Moreover, the following duality
relations hold: 
\begin{equation*}
\frac{1}{\delta(X)}+\frac{1}{\sigma (X^{\ast })}=1\;\;\mbox{and}\;\;\frac{1}{%
\sigma (X)}+\frac{1}{\delta(X^{\ast })}=1.
\end{equation*}

\begin{definition}
\label{Def_ap} Let $1\le p\le\infty$. We say that $l_p$ is \textit{finitely
lattice representable} in a Banach lattice $X$ whenever for every $n\in%
\mathbb{N}$ and each $\varepsilon >0$ there exist pair-wise disjoint
elements $x_{i}\in X$, $i=1,2,\dots,n$, such that for any sequence $\left\{
a_{i}\right\} _{i=1}^{n}$ of scalars we have

\begin{equation}  \label{Fin distr}
\left( \sum_{i=1}^{n}\left\vert a_{i}\right\vert ^{p}\right) ^{1/p}\leq
\left\Vert \sum_{i=1}^{n}a_{i}x_{i}\right\Vert _{X}\leq \left( 1+\varepsilon
\right) \left( \sum_{i=1}^{n}\left\vert a_{i}\right\vert ^{p}\right) ^{1/p}.
\end{equation}

Similarly, $l_p$ is said to be \textit{crudely finitely lattice representable%
} in $X$ whenever instead of \eqref{Fin distr} it holds 
\begin{equation*}
C^{-1}\left( \sum_{i=1}^{n}\left\vert a_{i}\right\vert ^{p}\right)
^{1/p}\leq \left\Vert \sum_{i=1}^{n}a_{i}x_{i}\right\Vert _{X}\leq C\left(
\sum_{i=1}^{n}\left\vert a_{i}\right\vert ^{p}\right) ^{1/p},
\end{equation*}%
where $C$ is a constant independent of $n\in\mathbb{N}$ and $\left\{
a_{i}\right\} _{i=1}^{n}$.
\end{definition}

For the following result see, for instance, \cite[Theorem 1.f.12.ii]{LT79}.

\begin{proposition}
\label{Prop_dia} Let $X$ be a Banach lattice. Then $X$ admits a lower $p$%
-estimate for some $p<\infty$ if and only if has $l_\infty$ fails to be
finitely lattice representable in $X$.
\end{proposition}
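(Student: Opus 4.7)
The plan is to prove both directions separately. The forward direction (lower $p$-estimate implies $l_\infty$ not finitely lattice representable) is immediate: if $X$ admits a lower $p$-estimate with constant $M$ for some $p<\infty$ and $l_\infty$ is finitely lattice representable, testing \eqref{Fin distr} with $a_i=\delta_{ij}$ forces $\|x_i\|_X\ge 1$, and with $a_i=1$ forces $\|\sum_{i=1}^n x_i\|_X\le 1+\varepsilon$. The lower $p$-estimate then gives $n^{1/p}\le M(1+\varepsilon)$, which is impossible for large $n$.

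For the converse I would argue contrapositively. Assume $l_\infty$ is not finitely lattice representable in $X$ and introduce
\begin{equation*}
\gamma_m:=\inf\bigl\{\|x_1+\cdots+x_m\|_X:\,x_i\ge 0\text{ pairwise disjoint},\,\|x_i\|_X=1\bigr\}.
\end{equation*}
Reducing to positive disjoint families by passing to absolute values, and using the pointwise inequality $x_i/\|x_i\|_X\le x_i$ whenever $\|x_i\|_X\ge 1$, the hypothesis is equivalent to the existence of an integer $n_0\ge 2$ and a constant $c>1$ with $\gamma_{n_0}\ge c$. I would then carry out a blocking induction to show $\gamma_{n_0^k}\ge c^k$ for every $k$: given disjoint positive norm-$1$ elements $x_1,\dots,x_{n_0^{k+1}}$, partition them into $n_0$ blocks of size $n_0^k$, let $v_j$ be the block sums (so $\|v_j\|_X\ge c^k$ by induction), and set $\tilde v_j:=v_j/\|v_j\|_X$; these are disjoint positive norm-$1$, so $\|\sum_j\tilde v_j\|_X\ge c$ by the base case, and the pointwise bound $v_j\ge c^k\tilde v_j$ delivers $\|\sum_j v_j\|_X\ge c^{k+1}$. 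Comparing general $m$ with $n_0^{\lfloor\log_{n_0}m\rfloor}$ then yields $\gamma_m\ge\delta\,m^{1/p}$ with $p:=\log n_0/\log c<\infty$ and some $\delta>0$.

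The main obstacle is that this provides a lower estimate only for \emph{equal-norm} disjoint families, whereas a bona fide lower $p'$-estimate must hold for all disjoint families. To bridge the gap, for arbitrary disjoint positive $y_1,\dots,y_N\in X$ with $a_i:=\|y_i\|_X$ arranged in decreasing order and $\tilde y_i:=y_i/a_i$, the pointwise estimate $\sum_{i=1}^N y_i\ge a_k\sum_{i=1}^k\tilde y_i$ combined with the previous step yields $k^{1/p}a_k\le\delta^{-1}\|\sum_i y_i\|_X$ for every $k$. For any $p'>p$, the bound $a_k^{p'}\le(\delta^{-1}\|\sum_i y_i\|_X)^{p'}k^{-p'/p}$ is summable since $\sum_k k^{-p'/p}<\infty$, so
\begin{equation*}
\Bigl(\textstyle\sum_k a_k^{p'}\Bigr)^{1/p'}\le C_{p,p'}\,\bigl\|\textstyle\sum_i y_i\bigr\|_X,
\end{equation*}
a lower $p'$-estimate with $p'<\infty$, contradicting the hypothesis. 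This last trade of $p$ for a slightly larger $p'$, essentially a weak-$l_p$-to-strong-$l_{p'}$ passage on the decreasing rearrangement of $\{\|y_i\|_X\}$, is the only non-routine step.
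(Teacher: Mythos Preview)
The paper does not prove this proposition; it simply cites \cite[Theorem 1.f.12.ii]{LT79} as a reference. Your argument is correct and is essentially the classical proof found there: the supermultiplicativity/blocking argument shows that $\gamma_m$ grows at least like a power of $m$, and the weak-$l_p$-to-strong-$l_{p'}$ passage on the decreasing rearrangement of the norms upgrades this to a genuine lower $p'$-estimate.

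One expository point: your framing is muddled. You announce a contrapositive argument but then assume directly that $l_\infty$ is not finitely lattice representable and derive a lower $p'$-estimate. That is the direct proof of the converse implication, not the contrapositive, so the closing phrase ``contradicting the hypothesis'' is out of place; there is nothing to contradict, you have simply proved what was wanted. Either drop ``contrapositively'' and the final clause, or genuinely set up the contrapositive (assume no lower $p$-estimate exists for any $p<\infty$, suppose for contradiction that $l_\infty$ is not representable, and then contradict the first assumption). The mathematics is unaffected.
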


Moreover, in view of \cite[Theorem 1.a.5, 1.a.7]{LT79} and \cite[p.~288]%
{JMST}, it follows

\begin{proposition}
\label{Prop_not_A-infinity} If a Banach lattice $X$ is not $\sigma$-
complete or has a not $\sigma $-order continuous norm, then $l_\infty$ is
finitely lattice representable in $X$.
\end{proposition}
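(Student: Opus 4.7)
The plan is to derive, from each of the two hypotheses, a contradiction with the existence of any lower $p$-estimate for $X$ with $p<\infty$, and then to invoke Proposition~\ref{Prop_dia}. The whole argument rests on producing, in each hypothesized case, a pairwise disjoint sequence of positive elements $(x_{k})_{k=1}^{\infty} \subset X$ satisfying
\[
\delta := \inf_{k} \|x_{k}\|_{X} > 0 \quad \text{and} \quad M := \sup_{N} \left\|\sum_{k=1}^{N} x_{k}\right\|_{X} < \infty.
\]

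Once such a sequence is at hand, the finish is immediate: assuming a lower $p$-estimate with constant $M_{[p]}(X)$ and applying it to the finite disjoint family $(x_{k})_{k=1}^{N}$ yields
\[
\delta\, N^{1/p} \leq \left(\sum_{k=1}^{N} \|x_{k}\|_{X}^{p}\right)^{1/p} \leq M_{[p]}(X)\, \left\|\sum_{k=1}^{N} x_{k}\right\|_{X} \leq M_{[p]}(X)\, M,
\]
which fails as $N\to\infty$. Hence $X$ admits no lower $p$-estimate for any $p<\infty$, and Proposition~\ref{Prop_dia} then gives that $l_{\infty}$ is finitely lattice representable in $X$.

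It therefore suffices to produce the sequence $(x_{k})$ in each case, and this is where the cited results enter. If the norm of $X$ is not $\sigma$-order continuous, the classical characterization in \cite[Theorem~1.a.5]{LT79} supplies a disjoint sequence inside some order interval $[0,u] \subset X_{+}$ whose norms fail to tend to zero; passing to a subsequence gives $(x_{k})$, and the order bound $u$ provides $\sup_{N}\|\sum_{k=1}^{N} x_{k}\|_{X} \leq \|u\|_{X}$. If instead $X$ is not $\sigma$-order complete, one uses the result (via \cite[Theorem~1.a.7]{LT79} and \cite[p.~288]{JMST}) that $X$ contains a sublattice order-isomorphic to $c_{0}$; taking the image of the canonical basis there yields a disjoint positive sequence with uniformly norm-bounded partial sums, as required. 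The step I expect to be the most delicate is this second extraction, whose core amounts to a careful disjointification argument on a bounded chain in $X$ that fails to have a supremum.
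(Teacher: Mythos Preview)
Your argument is correct. The paper itself does not give a proof of this proposition; it merely records that it follows from \cite[Theorems~1.a.5 and~1.a.7]{LT79} together with \cite[p.~288]{JMST}. The route implicit in those citations is slightly more direct than yours: if $X$ is $\sigma$-complete but not $\sigma$-order continuous, \cite[Theorem~1.a.5]{LT79} yields a closed sublattice of $X$ order-isomorphic to $l_\infty$, and if $X$ is not $\sigma$-complete, \cite[Theorem~1.a.7]{LT79} yields a sublattice order-isomorphic to $c_0$; in either case $l_\infty^n$ embeds lattice-isomorphically into $X$ with uniform distortion for all $n$, and the passage from crude to exact finite lattice representability is the content of \cite[p.~288]{JMST}. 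Your approach instead extracts from those same structural results a single disjoint sequence obstructing every lower $p$-estimate and then invokes Proposition~\ref{Prop_dia}. This is a pleasant unification of the two cases, at the cost of one extra step; the paper's implied route bypasses Proposition~\ref{Prop_dia} entirely.

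Two small remarks. First, in your Case~1 you should note that you may assume $X$ is $\sigma$-complete (otherwise Case~2 applies), since \cite[Theorem~1.a.5]{LT79} is stated under that hypothesis; alternatively, the characterization ``$\sigma$-order continuous $\Leftrightarrow$ every order-bounded disjoint sequence is norm-null'' holds for arbitrary Banach lattices (see e.g.\ \cite[Theorem~2.4.2]{MeNi91}), which covers the case without any completeness assumption. Second, the reference to \cite[p.~288]{JMST} in your Case~2 is not needed for the $c_0$-sublattice extraction (that is purely \cite[Theorem~1.a.7]{LT79}); in the paper's direct route it is used only for the crude-to-exact upgrade, which your argument via Proposition~\ref{Prop_dia} avoids altogether.
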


\subsection{The main result and its consequences}

Now we are ready to state the main result of this paper, which gives a
characterization of relatively $s$-decomposable Banach lattices in terms of
their upper and lower estimates. This is an extension of results of the
paper \cite{CwNiSc03}, where the case $s=\infty $ was covered in a more
restrictive setting of Banach lattices of measurable functions. Note that
the case when $\delta\left(Y\right)\le \sigma \left( X\right)$ is more
interesting, because then the lattices $X$ and $Y$ potentially may be not
relatively decomposable. As was mentioned in Section \ref{Intro}, a
non-trivial part of the next theorem can be also treated as the converse to
Proposition \ref{Prop_estimates_decomp}.

\begin{theorem}
\label{Th_main} Suppose $X$ and $Y$ are infinite dimensional Banach
lattices. If $\delta\left(Y\right)\le \sigma \left( X\right)$ the following
conditions are equivalent:

$\left( i\right) $ $X$ and $Y$ are relatively $s$-decomposable;

$\left( ii\right)$ There exist $p,q$, with $1/p=1/q+1/s$, such that $X$
satisfies a lower $q$-estimate and $Y$ an upper $p$-estimate;

$\left( iii\right)$ There exist $p,q$ with $1/p=1/q+1/s$ such that $X,l_{q} $
and $l_{p},Y$ are relatively decomposable.

In addition, if 
\begin{equation*}
F_{s}\left( X,Y\right) :=\inf \left\{ M_{\left[ q\right] }\left( X\right) M^{%
\left[ p\right] }\left( Y\right) :\frac{1}{s}=\frac{1}{p}-\frac{1}{q},1\leq
p\leq q\leq \infty \right\} ,
\end{equation*}%
it holds 
\begin{equation*}
D_{s}\left( X,Y\right) \leq F_{s}\left( X,Y\right) \leq D_{s}\left(
X,Y\right) ^{2}.
\end{equation*}

Moreover, $\sigma \left( X\right)\le \delta\left(Y\right)$ if and only if $%
s_{\max }(X,Y)=\infty$.
\end{theorem}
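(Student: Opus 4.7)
The implication $(ii) \Rightarrow (i)$, together with the bound $D_s(X,Y) \leq F_s(X,Y)$, is immediate from Proposition \ref{Prop_estimates_decomp}, and the equivalence $(ii) \Leftrightarrow (iii)$ is a direct consequence of Proposition \ref{Prop_rel_dec_lp sp}. The nontrivial content is therefore the implication $(i) \Rightarrow (ii)$, together with the reverse quantitative bound $F_s(X,Y) \leq D_s(X,Y)^2$.

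To establish $(i) \Rightarrow (ii)$, my plan is to pass through the optimal sequence spaces $X_L$ and $Y_U$ developed in Section \ref{Sec-optimal-spaces}. Assuming $X$ and $Y$ are relatively $s$-decomposable, Proposition \ref{Prop_rel_decomp_mult} provides the continuous inclusion of $l_s$ into the multiplier space $M(X_L, Y_U)$ (with respect to coordinate-wise multiplication), with norm controlled by $D_s(X,Y)$. Because $X_L$ and $Y_U$ are rearrangement invariant (Theorem \ref{Th_XL_XU_Prop}), their lattice indices are well-behaved, and since these spaces are constructed via the finite lattice representability of $l_r$-spaces in $X$ and $Y$, Propositions \ref{Prop_dia} and \ref{Prop_not_A-infinity} identify the lower estimate index of $X_L$ with $\sigma(X)$ and the upper estimate index of $Y_U$ with $\delta(Y)$. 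Combining the multiplier inclusion with the hypothesis $\delta(Y) \leq \sigma(X)$ allows one to extract a pair $p \leq q$ satisfying $1/p - 1/q = 1/s$ such that $l_q \hookrightarrow X_L$ and $Y_U \hookrightarrow l_p$ with norms at most a constant multiple of $D_s(X,Y)$. Transferring these inclusions back to $X$ and $Y$ via the defining properties of $X_L$ and $Y_U$ produces a lower $q$-estimate for $X$ and an upper $p$-estimate for $Y$, and multiplying the two constants gives $F_s(X,Y) \leq D_s(X,Y)^2$.

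The principal obstacle I anticipate is precisely this extraction step: from the single inclusion $l_s \hookrightarrow M(X_L, Y_U)$, one has to produce concrete exponents $p, q$ realizing the sharp relation $1/p - 1/q = 1/s$ with matching one-sided embeddings into $X_L$ and out of $Y_U$. Carrying this out requires the rearrangement invariance of $X_L$ and $Y_U$ and the identification of their extremal lattice indices with $\sigma(X)$ and $\delta(Y)$ to be used in concert; the assumption $\delta(Y) \leq \sigma(X)$ is what excludes the degenerate regime where any $s$ would trivially work and forces the two indices into the correct position relative to $s$.

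Finally, for the ``Moreover'' statement, the implication $\sigma(X) \leq \delta(Y) \Rightarrow s_{\max}(X,Y) = \infty$ follows by choosing $r$ between $\sigma(X)$ and $\delta(Y)$ (with a short limiting argument at the boundary when the indices coincide and are not attained), so that $X$ admits a lower $r$-estimate and $Y$ an upper $r$-estimate, and then applying Proposition \ref{Prop_estimates_decomp} with $s = \infty$. The converse is the equivalence $(i) \Leftrightarrow (ii)$ just proved, applied at $s = \infty$ under the hypothesis $\delta(Y) < \sigma(X)$: no common exponent $p = q$ can simultaneously satisfy $q \geq \sigma(X)$ and $p \leq \delta(Y)$, so $X$ and $Y$ fail to be relatively decomposable, whence $s_{\max}(X,Y) < \infty$.
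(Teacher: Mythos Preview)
Your overall strategy --- pass to the optimal sequence spaces $X_L$ and $Y_U$, use the multiplier inclusion from Proposition~\ref{Prop_rel_decomp_mult}, and exploit the identification of their indices with $\sigma(X)$ and $\delta(Y)$ --- is exactly the paper's route. But the ``extraction'' step, which you yourself flag as the obstacle, is not only underspecified; as written it is misdirected. The embeddings you propose to extract, $l_q \hookrightarrow X_L$ and $Y_U \hookrightarrow l_p$, point the \emph{wrong way} for the transfer back to $X$ and $Y$: a lower $q$-estimate for $X$ corresponds to $X_L \hookrightarrow l_q$ (not $l_q \hookrightarrow X_L$), and an upper $p$-estimate for $Y$ corresponds to $l_p \hookrightarrow Y_U$ (not $Y_U \hookrightarrow l_p$). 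The embeddings you name are already available from Proposition~\ref{coincidence with lp} at the extreme indices $q=\sigma(X)$, $p=\delta(Y)$ with constant~$1$, and they are the \emph{inputs}, not the outputs, of the argument: combined with the multiplier inclusion they yield $l_{p_1}\hookrightarrow Y_U$ and $X_L\hookrightarrow l_{q_1}$ for $1/p_1=1/\sigma(X)+1/s$ and $1/q_1=1/\delta(Y)-1/s$, which then do transfer. The paper carries this out via the separate Propositions~\ref{Lemma-1} and~\ref{Lemma-2}, whose proofs use Proposition~\ref{Prop_main} and Schep's theorem (that $l_{\delta(Y)}$ is finitely lattice representable in $Y_U$ and $l_{\sigma(X)}$ in $X_L$, via Corollary~\ref{cor2}, not Propositions~\ref{Prop_dia} and~\ref{Prop_not_A-infinity} as you cite). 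The further relation $1/\delta(Y)=1/\sigma(X)+1/s_{\max}$ (Proposition~\ref{Lemma-3a}) is what ties $p_1,q_1$ back to the constraint $1/p-1/q=1/s$.

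Your ``Moreover'' argument also has a gap in the converse direction. From $s_{\max}=\infty$ you cannot apply $(i)\Leftrightarrow(ii)$ at $s=\infty$, because $s_{\max}=\infty$ is a supremum and does \emph{not} guarantee that $X,Y$ are relatively $\infty$-decomposable. The paper instead uses Proposition~\ref{Lemma-3} for each finite $s$ (giving $1/\delta(Y)\le 1/\sigma(X)+1/s$) and lets $s\to\infty$. Similarly, when $\sigma(X)=\delta(Y)$ your ``short limiting argument'' is not enough without the identity of Proposition~\ref{Lemma-3a}; one cannot simply pick $r$ between the indices when they coincide and are not attained.
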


The proof of this theorem is presented in Section \ref{Proof_Main_theorem}
below. $\medskip $

$\medskip $ Now, by using standard arguments (see e.g. \cite[Proposition
1.f.6]{LT79}), one can prove that Definition \ref{Def-Rel-Decomposable} is
equivalent to the following assertion for general sequences of elements in
Banach lattices. 

\begin{corollary}
Infinite dimensional Banach lattices $X$ and $Y$ are relatively $s$%
-decomposable if and only if there exists a constant $D_{s}>0$ such that for
each $n\in\mathbb{N}$ and all sequences $\left\{ x_{i}\right\}
_{i=1}^{n}\subseteq S_{X}$, $\left\{ y_{i}\right\} _{i=1}^{n}\subseteq S_{Y}$
and $\left\{ a_{i}\right\} _{i=1}^{n}\subseteq \mathbb{R}$ we have 
\begin{equation*}
\left\Vert \vee _{i=1}^{n}\left\vert a_{i}y_{i}\right\vert \right\Vert
_{Y}\leq D_{s}\left( \sum_{i=1}^{n}\left\vert a_{i}\right\vert ^{s}\right)
^{1/s}\left\Vert \vee _{i=1}^{n}\left\vert x_{i}\right\vert \right\Vert _{X}
\end{equation*}
\end{corollary}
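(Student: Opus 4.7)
\medskip

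\noindent\textbf{Proof plan.}
The ``if'' direction is immediate. Specialize the displayed inequality to \emph{pairwise disjoint} sequences $\{x_i\}_{i=1}^n\subseteq S_X$ and $\{y_i\}_{i=1}^n\subseteq S_Y$ (such sequences exist since $X,Y$ are infinite dimensional). For these, $\vee_{i=1}^n|x_i|=\sum_{i=1}^n|x_i|$ and $\vee_{i=1}^n|a_iy_i|=\sum_{i=1}^n|a_iy_i|$, so the inequality collapses onto the scalar-weighted formulation of Definition~\ref{Def-Rel-Decomposable} recorded just after that definition, giving one of the equivalent statements of relative $s$-decomposability.

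For the ``only if'' direction, my plan is to run a disjointification argument in the spirit of \cite[Proposition~1.f.6]{LT79}. First I would reduce to $x_i,y_i\ge 0$ and $a_i\ge 0$: the norms and the lattice expressions $\vee_i|a_i y_i|$, $\vee_i|x_i|$ are invariant under passing to $|x_i|,|y_i|$ and absorbing signs into $a_i$. Fix $\varepsilon>0$ and pass to the principal ideals $X_u$, $Y_v$ generated by the positive elements $u:=\vee_{i=1}^n x_i$, $v:=\vee_{i=1}^n y_i$; by Kakutani's representation these are lattice-isomorphic to $C(K_X)$ and $C(K_Y)$, with $u,v$ as order units. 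In the $C(K_X)$-picture, approximate each $x_i$ within $\varepsilon$ (in $X$-norm) by a step function $\sum_{k=1}^N c_{ik}\,e_k$ built from pairwise disjoint positive $e_k\in X$ (a common refinement of the level sets of $(x_1,\dots,x_n)$), and analogously $y_i\approx \sum_{k=1}^N d_{ik}\,f_k$ with pairwise disjoint $f_k\in Y$, indexing both refinements by the same set $\{1,\dots,N\}$.

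Using disjointness of $\{e_k\}$ in $X$ and $\{f_k\}$ in $Y$ one then rewrites, up to $O(\varepsilon)$,
\[
\vee_{i=1}^n a_i y_i \;\approx\; \sum_{k=1}^N \Bigl(\max_{1\le i\le n} a_i d_{ik}\Bigr) f_k,
\qquad
\vee_{i=1}^n x_i \;\approx\; \sum_{k=1}^N \Bigl(\max_{1\le i\le n} c_{ik}\Bigr) e_k.
\]
Setting $\tilde e_k=e_k/\|e_k\|_X\in S_X$ and $\tilde f_k=f_k/\|f_k\|_Y\in S_Y$ (discarding any vanishing terms) and applying the scalar-weighted form of Definition~\ref{Def-Rel-Decomposable} to the disjoint families $\{\tilde e_k\},\{\tilde f_k\}$, with scalars chosen so as to reproduce the two maxima above, should absorb the coefficients $\max_i a_i d_{ik}$ into $D_s(\sum|a_i|^s)^{1/s}$ and the coefficients $\max_i c_{ik}$ into $\|\vee_i x_i\|_X$. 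Letting $\varepsilon\to 0$ then closes the argument.

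The hard part, and the real content of the technique in \cite[Proposition~1.f.6]{LT79}, is the second step: the $C(K)$-representations of $X_u$ and of $Y_v$ are unrelated, so matching the two disjointifications to a \emph{common} index $k=1,\dots,N$ in a way that lets the bilinear inequality of Definition~\ref{Def-Rel-Decomposable} be read off cleanly--in particular, controlling the norms of the lifted $e_k,f_k$ and the shape of the coefficient matrices $(c_{ik}),(d_{ik})$--is where the delicate bookkeeping lies. I would expect this passage, rather than the reductions or the invocation of the definition itself, to occupy the bulk of the proof.
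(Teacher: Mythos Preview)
Your approach diverges from the paper's, and the difficulty you flag in the ``only if'' direction is a genuine obstruction, not just bookkeeping.

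The corollary sits immediately after Theorem~\ref{Th_main} because the paper's argument \emph{uses} that theorem: relative $s$-decomposability is first converted into the existence of $p,q$ with $1/p=1/q+1/s$ such that $Y$ has an upper $p$-estimate and $X$ a lower $q$-estimate, and then \cite[Proposition~1.f.6]{LT79} is applied \emph{separately} in $Y$ and in $X$ to pass from the disjoint formulation of these estimates to arbitrary finite families. The key feature is that Theorem~\ref{Th_main} decouples $X$ from $Y$, so one never has to disjointify in both lattices simultaneously or match anything across them.

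Your plan for ``only if'' tries instead to run a joint disjointification and then invoke Definition~\ref{Def-Rel-Decomposable} across a common index $k$. But the Kakutani representations of $X_u$ and $Y_v$ live on unrelated compacta, so any pairing $e_k\leftrightarrow f_k$ is arbitrary; to apply the definition you would need a bound of the shape
\[
\sum_k\Bigl(\frac{(\max_i a_i d_{ik})\,\|f_k\|_Y}{(\max_i c_{ik})\,\|e_k\|_X}\Bigr)^{\!s}\ \lesssim\ \sum_i|a_i|^s,
\]
and nothing in the setup controls the ratios $\|f_k\|_Y/\|e_k\|_X$. This is precisely why the paper detours through the structural characterization of Theorem~\ref{Th_main} rather than attempting a direct reduction.

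A smaller gap in your ``if'' direction: specializing the hypothesis to disjoint $x_i,y_i$ yields the inequality displayed just after Definition~\ref{Def-Rel-Decomposable} only with $b_i\equiv 1$, i.e.\ $\bigl\|\sum_i a_i y_i\bigr\|_Y\le D(\sum_i|a_i|^s)^{1/s}\bigl\|\sum_i x_i\bigr\|_X$, which is not the full bilinear formulation with two independent scalar sequences $\{a_i\},\{b_i\}$ that the paper records there as equivalent to relative $s$-decomposability.
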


\begin{corollary}
Suppose infinite dimensional Banach lattices $X$ and $Y$ are relatively $s$%
-decomposable for some $s\in [1,\infty]$. Then, there exist equivalent norms
on $X$ and $Y$ such that $X$ and $Y$ are relatively $s$-decomposable Banach
lattices with constant one.
\end{corollary}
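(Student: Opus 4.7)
The strategy is to extract upper and lower estimates from the relative $s$-decomposability via Theorem~\ref{Th_main}, then replace the norms on $X$ and $Y$ by equivalent lattice norms that force these estimates to hold with constant one, and finally invoke Proposition~\ref{Prop_estimates_decomp}.

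First I would produce suitable exponents from the hypothesis. If $\delta(Y)\le \sigma(X)$, then Theorem~\ref{Th_main} directly supplies $1\le p\le q\le \infty$ with $1/p=1/q+1/s$ such that $X$ admits a lower $q$-estimate and $Y$ an upper $p$-estimate. If instead $\sigma(X)<\delta(Y)$, the moreover clause of Theorem~\ref{Th_main} ensures $s_{\max}(X,Y)=\infty$, and one chooses $q$ slightly above $\sigma(X)$ and $p$ slightly below $\delta(Y)$ with $1/p-1/q=1/s$; this is feasible precisely because $1/\sigma(X)+1/s>1/\delta(Y)$ in this regime.

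Next I would renorm via the dual constructions
\[
|||x|||_X := \sup\Bigl\{\Bigl(\sum_{i=1}^n\|x_i\|_X^q\Bigr)^{1/q}:(x_i)_{i=1}^n\text{ pairwise disjoint},\ \sum_{i=1}^n|x_i|\le |x|\Bigr\},
\]
\[
|||y|||_Y := \inf\Bigl\{\Bigl(\sum_{i=1}^n\|y_i\|_Y^p\Bigr)^{1/p}:(y_i)_{i=1}^n\text{ pairwise disjoint},\ |y|\le \sum_{i=1}^n|y_i|\Bigr\}.
\]
Both quantities are lattice-monotone by construction, and they are equivalent to the given norms by the lower and upper estimates secured in the previous step (together with the fact that a disjoint sum bounded above by $|x|$ has norm at most $\|x\|$). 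A standard double-decomposition bookkeeping then shows that $X$ satisfies its lower $q$-estimate and $Y$ its upper $p$-estimate with constant one under the new norms: if $z=\sum_j z_j$ is disjoint and each $z_j$ is further refined by a near-optimal disjoint family, the combined family remains globally disjoint because the $z_j$ are (for the $Y$-side, one first projects each refinement onto the band generated by $z_j$, which in full generality may require passage to the Dedekind completion of $Y$ and returning afterwards). Applying Proposition~\ref{Prop_estimates_decomp} in the renormed setting yields $D_s(X,Y)\le M_{[q]}(X)\cdot M^{[p]}(Y)=1$, and since $D_s(X,Y)\ge 1$ always---by testing the definition with $n=1$ on unit vectors---one concludes $D_s(X,Y)=1$ exactly.

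The principal obstacle is establishing the triangle inequality for the renormed quantities, i.e., that $|||\cdot|||_X$ and $|||\cdot|||_Y$ are genuine norms. This requires a Riesz decomposition argument: given a disjoint family $(u_i)$ with $\sum_i|u_i|\le |x|+|x'|$, one must split each $u_i$ as $u_i=u_i'+u_i''$ so that both $(u_i')$ and $(u_i'')$ remain disjoint and $\sum_i|u_i'|\le |x|$, $\sum_i|u_i''|\le |x'|$, making the $q$-th power sums decouple. Once subadditivity is verified, positive homogeneity, equivalence to the original norm, and lattice-monotonicity are routine checks.
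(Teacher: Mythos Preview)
Your strategy is exactly the paper's: extract $p,q$ from Theorem~\ref{Th_main}, renorm so that the upper and lower estimate constants become~$1$, and conclude via Proposition~\ref{Prop_estimates_decomp}. The paper simply invokes \cite[Lemma~2.8.8]{MeNi91} for the renorming step and then observes $D_s(X,Y)\le F_s(X,Y)=1$; you instead attempt to reconstruct that lemma explicitly.

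Your $|||\cdot|||_X$ construction is correct, and the Riesz decomposition argument you sketch does establish its subadditivity (no Dedekind completion is needed: for disjoint $u_i\ge 0$ with $\sum u_i = s = s'+s''$, setting $u_i'=u_i\wedge s'$ and $u_i''=u_i-u_i'$ already gives disjoint families with $\sum u_i'=s'$ and $\sum u_i''=s''$, by distributivity). The gap is on the $Y$-side. Your formula $|||y|||_Y=\inf\{(\sum\|y_i\|^p)^{1/p}: (y_i)\ \text{disjoint},\ |y|\le\sum|y_i|\}$ is an infimum, so subadditivity requires \emph{combining} near-optimal disjoint covers of $y$ and $y'$ into a single disjoint cover of $y+y'$ with controlled $\ell_p$-sum---the opposite direction from the splitting argument you describe for the $X$-side. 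Two disjoint families need not merge into one disjoint family, and projecting onto common-refinement bands does not obviously control the $\ell_p$-sum without a lower estimate on $Y$, which is not assumed. Your final paragraph conflates the two cases.

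The clean fix is either to cite \cite[Lemma~2.8.8]{MeNi91} as the paper does, or to pass through duality: $Y$ has an upper $p$-estimate iff $Y^*$ has a lower $p'$-estimate, so one renorms $Y^*$ via your (working) sup-formula with exponent $p'$, then takes the induced predual norm on $Y$. A separate minor point: in the case $\sigma(X)<\delta(Y)$, insisting on $1/p-1/q=1/s$ with $q$ only slightly above $\sigma(X)$ can force $p<1$; but Proposition~\ref{Prop_estimates_decomp} only needs the inequality $1/p\le 1/q+1/s$, so one may freely choose any $q>\sigma(X)$ and any $p<\delta(Y)$, and the issue disappears.
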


\begin{proof}
By \cite[Lemma $2.8.8$]{MeNi91}, every Banach lattice which satisfies a
lower $p$-estimate/an upper $q$-estimate admits an equivalent Banach lattice
norm such that the corresponding lower/upper estimate constant is equal to
one. Consequently, after a suitable renorming $X$ and $Y$, in the notation
of Theorem \ref{Th_main} we have $F_{s}\left(X,Y\right) =1$, which implies
that $D_{s}\left( X,Y\right)=1. $
\end{proof}

\subsection{Rearrangement invariant spaces}

\label{RI}

For a detailed theory of rearrangement invariant spaces we refer to the
monographs \cite{LT79,KPS}. $\medskip $

{Let $I=[0,1]$ or $(0,\infty)$ and let $m$ be the Lebesgue measure on $I$.
Given a measurable function $x(t)$ on $I$ we define its distribution
function by 
\begin{equation*}
n_x(\tau):=m\{t\in I:\,|x(t)|>\tau\},\;\; \tau>0.
\end{equation*}
}

Measurable functions $x(t)$ and $y(t)$ on $I$ are called \textit{%
equimeasurable} if $n_{x}(\tau )=n_{y}(\tau )$ for all $\tau >0$. In
particular, each function $x(t)$ on $I$ is equimeasurable with its
non-increasing left-continuous rearrangement $x^{\ast }(t)$ of $|x(t)|$,
which defines by 
\begin{equation*}
x^{\ast }(t):=\inf \{\tau >0:\,n_{x}(\tau )<t\},\;\;t\in I.
\end{equation*}

\begin{definition}
A Banach function space $X$ on $I$ is said to be \textit{rearrangement
invariant} (in short, r.i.) (or \textit{symmetric}) if the conditions $x\in
X $ and $n_y(\tau)\le n_x(\tau)$ for all $\tau>0$ imply that $y\in X$ and $%
\|y\|_{X} \le \|x\|_{X}$.
\end{definition}

Let $X$ be a r.i. space. If $I=[0,1]$ (resp. $I=(0,\infty )$) we have $%
L_{\infty }\hookrightarrow X\hookrightarrow L_{1}$ (resp. $L_{\infty }\cap
L_{1}\hookrightarrow X\hookrightarrow L_{\infty }+L_{1}$). $\medskip $

$\medskip $The \textit{K{ö}the dual space} $X^{\prime }$ consists of all
measurable functions $y$ such that 
\begin{equation*}
\Vert y\Vert _{X^{\prime }}:=\sup \{\int_{I}|x(t)y(t)|\,dt:\,x\in X,\,\Vert
x\Vert _{X}\leq 1\}<\infty .
\end{equation*}%
Then, $X^{\prime }$ equipped with the norm $\Vert \cdot \Vert _{X^{\prime }}$
is a r.i. space. Moreover, $X\subset X^{\prime \prime }$, and the isometric
equality $X=X^{\prime \prime }$ holds if and only if the norm in $X$ has the 
\textit{Fatou property}, that is, if the conditions $0\leq x_{n}\uparrow x$
a.e. on $I$ and $\sup_{n\in {\mathbb{N}}}\Vert x_{n}\Vert <\infty $ imply $%
x\in X$ and $\Vert x_{n}\Vert \uparrow \Vert x\Vert $.$\medskip $

The \textit{fundamental function} $\phi _{X}$ of a r.i.\ space $X$ is
defined by $\phi _{X}(t):=\Vert \chi _{A}\Vert _{X}$, where $\chi _{A}$ is
the characteristic function of a measurable set $A\subset I$ with $m(A)=t$.
The function $\phi _{X}$ is \textit{quasi-concave} (i.e., $\phi _{X}(0)=0$, $%
\phi _{X}$ increases and $\phi _{X}(t)/t$ decreases). $\medskip $

Most important examples of r.i. spaces are the $L_{p}$-spaces, $1\leq p\leq
\infty $, and their natural generalization, the Orlicz spaces (for their
detailed theory we refer to the monographs \cite{KR61,RR,Mal}).$\medskip $

Let $M$ be an Orlicz function, that is, an increasing convex continuous
function on $[0,\infty )$ such that $M(0)=0$ and $\lim_{t\rightarrow \infty
}M(t)=\infty $. In what follows, we will assume also that $M(1)=1$. Denote
by $L_{M}:=L_{M}(I)$ the \textit{Orlicz space} endowed with the Luxemburg
norm 
\begin{equation*}
\Vert f\Vert _{L_{M}}:=\inf \left\{ \lambda >0\colon \int_{I}M\Big(\frac{%
|f(t)|}{\lambda }\Big)\,dt\leq 1\right\} .
\end{equation*}%
In particular, if $M(u)=u^{p}$, $1\leq p<\infty $, we obtain $L_{p}$.$%
\medskip $

Note that the definition of an Orlicz function space $L_{M}[0,1]$ depends
(up to equivalence of norms) only on the behaviour of the function $M(t)$
for large values of argument $t$. An easy calculation (see also formula
(9.23) in \cite{KR61} on page 79 of the English version) shows that 
\begin{equation}
\varphi _{L_{M}}(t)=\frac{1}{M^{-1}(1/t)},\;\;0<t\leq 1,  \label{fundamfunc}
\end{equation}%
where $M^{-1}$ is the inverse for $M$. $\medskip $

If $M$ is an Orlicz function, then the \textit{Young conjugate} function $%
\tilde{M}$ is defined by 
\begin{equation*}
\tilde{M}(u):=\sup_{t>0}(ut-M(t)),\;\;u\geq 0.
\end{equation*}%
Moreover, $\tilde{M}$ is also an Orlicz function and the Young conjugate for 
$\tilde{M}$ is $M$. $\medskip $

Every Orlicz space $L_{M}(I)$ has the Fatou property; $L_{M}[0,1]$ (resp. $%
L_{M}(0,\infty )$) is separable if and only if the function $M$ satisfies
the \textit{$\Delta _{2}^{\infty }$-condition} (resp. \textit{$\Delta _{2}$%
-condition}), i.e., $\sup_{u\geq 1}{M(2u)}/{M(u)}<\infty $ (resp. $\sup_{u>0}%
{M(2u)}/{M(u)}<\infty $). In this case we have $L_{M}(I)^{\ast
}=L_{M}(I)^{\prime }=L_{\tilde{M}}(I)$.$\medskip $

Let $1<q<\infty $, $1\leq r<\infty $. The Lorentz space $L_{q,r}=L_{q,r}(I)$
consists of all measurable functions $x$ such that 
\begin{equation*}
\Vert x\Vert _{q,r}:=\Big(\frac{r}{q}\int_{I}(t^{1/q}x^{\ast }(t))^{r}\frac{%
dt}{t}\Big)^{1/r}<\infty .
\end{equation*}%
The functional $x\mapsto \Vert x\Vert _{q,r}$ is not subadditive, but it is
equivalent to the norm $x\mapsto \Vert x^{\ast \ast }\Vert _{q,r}$, where $%
x^{\ast \ast }(t):=\frac{1}{t}\int_{0}^{t}x^{\ast }(s)\,ds$, $t>0$.
Moreover, $L_{q,r_{1}}\hookrightarrow L_{q,r_{2}}$, $1\leq r_{1}\leq
r_{2}<\infty $ and $L_{q,q}=L_{q}$ isometrically.$\medskip $

Rearrangement invariant (r.i.) sequence spaces are defined quite similarly.
In particular, the \textit{fundamental function} of a r.i.\ sequence space $%
X $ is defined by $\phi _{X}(n):=\Vert \sum_{k=1}^{n}e_{k}\Vert _{X}$, $%
n=1,2,\dots $. In what follows, $e_{k}$ are the canonical unit vectors,
i.e., $e_{k}=(e_{k}^{i})_{i=1}^{\infty }$, $e_{k}^{i}=0$ for $i\neq k$ and $%
e_{k}^{k}=1$, $k,i=1,2,\dots $.$\medskip $

Recall that an \textit{Orlicz sequence space} $\ell _{\psi }$, where $\psi $
is an Orlicz function, consists of all sequences $(a_{k})_{k=1}^{\infty }$
such that 
\begin{equation*}
\Vert (a_{k})\Vert _{\ell _{\psi }}:=\inf \left\{ u>0:\sum_{k=1}^{\infty
}\psi \Big(\frac{|a_{k}|}{u}\Big)\leq 1\right\} <\infty .
\end{equation*}%
Clearly, if $\psi (t)=t^{p}$, $p\geq 1$, then $\ell _{\psi }=\ell ^{p}$
isometrically. $\medskip $

The fundamental function of an Orlicz sequence space $\ell_{\psi}$ can be
calculated by the formula: $\phi_{\ell_{\psi}}(n)=\frac{1}{\psi^{-1}(1/n)}$, 
$n=1,2,\dots$ Furthermore, an Orlicz sequence space $\ell_{\psi}$ is
separable if and only if $\psi$ satisfies the $\Delta_{2}^{0}$-condition ($%
\psi\in \Delta_{2}^{0}$), that is, 
\begin{equation*}
\sup_{0<u\le 1}{\psi(2u)}/{\psi(u)}<\infty.
\end{equation*}
In this case we have $\ell_{\psi}^*=\ell_{\psi}^{\prime }=\ell_{\tilde{\psi}%
} $, with the Young conjugate function $\tilde{\psi}$ for $\psi$.

Observe that the definition of an Orlicz sequence space $\ell_{\psi}$
depends (up to equivalence of norms) only on the behaviour of $\psi$ near
zero.

\section{\label{Sec-optimal-spaces}Optimal Upper and Lower Sequence Lattices}

In this section we introduce and study some specialized notions which will
play an important role in the proof of our main Theorem \ref{Th_main}. They
are a special kind of sequence spaces which are generated via some
appropriate sequences of norms, defined on $\mathbb{R}^n$, $n\in\mathbb{N}$.

\subsection{Definitions and general properties}

\begin{definition}
Let $X$ be a Banach lattice. For each integer $n,$ let $\mathfrak{B}%
_{n}\left( X\right) $ denote the set of all sequences $\left\{ x_{i}\right\}
_{i=1}^{n}\subseteq S_{X}$ of elements with pair-wise disjoint support.
\end{definition}

\begin{lemma}
\label{Lemma_bn}If $X$ is a Banach lattice of dimension at least $n,$ then
the set $\mathfrak{B}_{n}\left( X\right) $ is non-empty.
\end{lemma}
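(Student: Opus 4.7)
The plan is to prove the lemma by induction on $n$. The case $n = 1$ is immediate: the hypothesis $\dim X \geq 1$ means $X \neq \{0\}$, and normalizing any nonzero vector gives the required singleton in $\mathfrak{B}_{1}(X)$.

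For the inductive step I would first establish the key auxiliary fact that any Banach lattice $E$ with $\dim E \geq 2$ contains two disjoint nonzero elements. Since every element of a Banach lattice is a difference of two positive elements, one can find two linearly independent positive $u, v \in E$. Set $\lambda_- := \sup\{\lambda \geq 0 : \lambda u \leq v\}$ and $\lambda_+ := \inf\{\lambda \geq 0 : \lambda u \geq v\}$; the Archimedean property forces both to be finite, while the linear independence of $u$ and $v$ rules out $v = \lambda u$ and hence forces $\lambda_- < \lambda_+$. For any $\lambda$ in the open interval $(\lambda_-, \lambda_+)$ the elements $(\lambda u - v)^+$ and $(\lambda u - v)^-$ are both nonzero, and they are disjoint by the identity $a^+ \wedge a^- = 0$.

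Now assume the statement for $n - 1$ and suppose $\dim X \geq n$. The inductive hypothesis applied for $n - 1$ yields pairwise disjoint positive elements $x_1, \ldots, x_{n-1} \in S_X$. Consider
\begin{equation*}
D := \{y \in X : |y| \wedge x_i = 0 \text{ for all } i = 1, \ldots, n - 1 \}.
\end{equation*}
If $D \neq \{0\}$, normalizing any $y \in D \setminus \{0\}$ yields the $n$-th disjoint element and finishes the argument.

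The main obstacle is the case $D = \{0\}$. Here I would argue that at least one of the principal ideals $X_{x_i}$ generated by the $x_i$ must have dimension $\geq 2$: applying the auxiliary fact inside such an $X_{x_i}$ produces two disjoint nonzero elements which, being dominated by $x_i$, are automatically disjoint from each $x_j$ with $j \neq i$. Replacing $x_i$ by one of them and appending the other extends the disjoint system to size $n$. The delicate point is showing that one cannot have $\dim X_{x_i} = 1$ for every $i$ under the hypothesis $\dim X \geq n$ combined with $D = \{0\}$. This reduces to a band-decomposition argument, cleanest after passing to the Dedekind completion of $X$ (or to the bidual $X^{**}$), where a direct sum splitting into the bands generated by the $x_i$ and their common disjoint complement is available. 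Since $D = \{0\}$ prevents the common disjoint complement from contributing anything to $X$, and one-dimensionality of each $X_{x_i}$ bounds the remaining contributions, the total dimension of $X$ would be at most $n - 1$, contradicting the hypothesis.
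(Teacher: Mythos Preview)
Your approach is correct and takes a different route from the paper's. One small slip: $\lambda_+$ need not be finite (take $u=e_1$, $v=e_2$ in $\mathbb{R}^2$), but all you actually need is $\lambda_-<\lambda_+$, which follows from linear independence and closedness of the positive cone.

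The ``delicate point'' is less delicate than you fear, and the detour through the Dedekind completion (which does work, since $X$ is order dense there) or the bidual (which needs more care, as $X$ need not be order dense in $X^{**}$) is avoidable. Assuming each $x_i$ is an atom and $D=\{0\}$, for $y\in X_+$ set $\mu_i:=\sup\{\mu\geq 0:\mu x_i\leq y\}$; the Archimedean property and closedness of the cone give $\mu_i x_i\leq y$, disjointness gives $\sum_i\mu_i x_i=\bigvee_i\mu_i x_i\leq y$, and the residual $z:=y-\sum_i\mu_i x_i\geq 0$ satisfies $z\wedge x_i=0$ for every $i$ (otherwise $z\wedge x_i=\alpha x_i$ with $\alpha>0$, whence $(\mu_i+\alpha)x_i\leq y$, contradicting maximality of $\mu_i$). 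Thus $z\in D=\{0\}$ and $y=\sum_i\mu_i x_i$, so $\dim X\leq n-1$ directly.

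The paper argues differently: it first disposes of the case where $l_\infty$ is finitely lattice representable in $X$ (then the disjoint sequence comes for free from Definition~\ref{Def_ap}), and otherwise invokes Proposition~\ref{Prop_not_A-infinity} to get that $X$ is $\sigma$-complete with $\sigma$-order continuous norm, so band projections $P_{x}$ exist inside $X$ itself; it then shows that any \emph{maximal} disjoint normalized system $\{x_i\}_{i=1}^m$ consists of atoms, each $P_{x_i}$ has one-dimensional range, and $X=\mathrm{span}\{x_i\}$, forcing $\dim X\leq m$. Your inductive argument with the two-dimensional splitting lemma is more self-contained and sidesteps the $l_\infty$ dichotomy entirely; the paper's route has the advantage of producing, in the same breath, the band projections needed immediately afterwards in Lemma~\ref{Lemma_bn_extension}.
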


The proof of this lemma will be provided in Section \ref{R.invariance}. $%
\medskip $

$\medskip $Let now $X$ be an infinite dimensional Banach lattice. Based on $%
X $ we associate two auxiliary constructions, which yield two sequence
spaces $X_{U}$ and $X_{L}$ that satisfy the following norm one continuous
embeddings: 
\begin{equation}
l_{1}\overset{1}{\hookrightarrow }X_{U}\overset{1}{\hookrightarrow }X_{L}%
\overset{1}{\hookrightarrow }l_{\infty }.  \label{embeddings}
\end{equation}%
We will call $X_{U}$ and $X_{L}$ the \textit{optimal upper} and respectively 
\textit{optimal lower sequence spaces} generated by $X.$ Note that the
construction, which leads to the space $X_{L}$, is close to the one
developed in the paper \cite{Jun02} and related to the optimal cotype and
summing properties of a Banach space. $\medskip $

To construct $X_{U}$ we define first, for each fixed integer $n$, the
following norm on $\mathbb{R}^{n}$ by 
\begin{equation*}
\left\Vert \left\{ a_{i}\right\} _{i=1}^{n}\right\Vert _{X_{U}\left(
n\right) }:=\sup \left\{ \left\Vert \sum_{i=1}^{n}a_{i}x_{i}\right\Vert
_{X}:\left\{ x_{i}\right\} _{i=1}^{n}\in \mathfrak{B}_{n}\left( X\right)
\right\} .
\end{equation*}%
Let $X_{U}$ be the space of all real-valued sequences $a=\left\{
a_{i}\right\} _{i=1}^{\infty }$, for which%
\begin{equation*}
\left\Vert a\right\Vert _{X_{U}}:=\sup_{n}\left\Vert \left\{ a_{i}\right\}
_{i=1}^{n}\right\Vert _{X_{U}\left( n\right) }<\infty .
\end{equation*}%
Since 
\begin{equation*}
\left\Vert \left\{ a_{i}\right\} _{i=1}^{n}\right\Vert _{X_{U}\left(
n\right) }\leq \sum_{i=1}^{n}\left\vert a_{i}\right\vert ,\;\;n\in \mathbb{N}%
,
\end{equation*}%
it follows the left-hand side embedding in \eqref{embeddings}. 

The first step in the definition of the space $X_{L}$ is the introduction of
the functionals $\Phi _{n}$, defined for $a=\left\{ a_{i}\right\}
_{i=1}^{n}\in \mathbb{R}^{n}$, $n\in \mathbb{N}$, by 
\begin{equation*}
\Phi _{n}\left( a\right) :=\inf \left\{ \left\Vert
\sum_{i=1}^{n}a_{i}x_{i}\right\Vert _{X}:\,\left\{ x_{i}\right\}
_{i=1}^{n}\in \mathfrak{B}_{n}\left( X\right) \right\} .
\end{equation*}%
Next, we set 
\begin{equation*}
\left\Vert a\right\Vert _{X_{L}\left( n\right) }:=\inf \left\{ \sum_{k\in
F}\Phi _{n}\left( a^{k}\right) :\,F\subseteq \mathbb{N},\left\vert
F\right\vert <\infty ,a^{k}\in \mathbb{R}^{n},a=\sum_{k\in F}a^{k}\right\} .
\end{equation*}%
Note that $\sup_{1\leq i\leq n}\left\vert a_{i}\right\vert \leq \Phi
_{n}\left( a\right) $ and hence $\left\Vert a\right\Vert _{l_{\infty
}^{n}}\leq \left\Vert a\right\Vert _{X_{L}\left( n\right) }$, which implies
that the mapping $a\mapsto \left\Vert a\right\Vert _{X_{L}\left( n\right) }$
defines a norm on $\mathbb{R}^{n}$. Finally, we define $X_{L}$ to be the
space of all real-valued sequences $a=\left\{ a_{i}\right\} _{i=1}^{\infty }$%
, for which 
\begin{equation*}
\left\Vert a\right\Vert _{X_{L}}:=\sup_{n}\left\Vert \left\{ a_{i}\right\}
_{i=1}^{n}\right\Vert _{X_{L}\left( n\right) }<\infty .
\end{equation*}%
Clearly, these definitions imply the second and third embeddings in %
\eqref{embeddings}. 

The proof of the following important properties of the spaces $X_{U}$ and $%
X_{L}$ we provide in Section \ref{R.invariance} below. 

\begin{theorem}
\label{Th_XL_XU_Prop} Let $X$ be an infinite dimensional Banach lattice.
Then $X_{L}$ is a r.i. sequence space and $X_{U}$ is a Banach sequence
lattice. Moreover, if $l_\infty$ is not finitely representable in $X $, $%
X_{U}$ is a r.i. sequence space as well.
\end{theorem}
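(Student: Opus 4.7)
The plan is to prove the theorem in three stages: elementary symmetry and monotonicity properties of the building blocks $\Phi_n$ and $\|\cdot\|_{X_U(n)}$; the structure and r.i.\ of $X_L$; and the structure and r.i.\ of $X_U$, with the latter being the delicate step. First, for each fixed $n$ I would verify by inspection that $\Phi_n$ and $\|\cdot\|_{X_U(n)}$ are symmetric under coordinate permutations (by relabelling $\{x_i\}\in\mathfrak{B}_n(X)$), sign-invariant (replacing $x_i\mapsto -x_i$ preserves $\mathfrak{B}_n(X)$), and monotone under $|a|\le|b|$ coordinatewise (since disjointness gives $|\sum a_ix_i|=\sum|a_i||x_i|\le|\sum b_ix_i|$). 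These properties transfer to $\|\cdot\|_{X_L(n)}$ via the rescaling trick $a^{(j)}_i:=b^{(j)}_i\,a_i/b_i$ (for $b_i\neq 0$) that lifts any $b$-decomposition to an $a$-decomposition of no greater cost. Lemma~\ref{Lemma_bn} itself follows from a standard inductive construction of disjoint unit-norm vectors in any infinite-dimensional Banach lattice.

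The key ingredient for $X_L$ is the unconditional consistency inequality
\[
\|(a,0,\dots,0)\|_{X_L(N)}\ \ge\ \|a\|_{X_L(k)}\qquad(N\ge k,\ a\in\mathbb{R}^k):
\]
indeed, any $\mathbb{R}^N$-decomposition can be truncated (by monotonicity of $\Phi_N$) to have each piece supported in the first $k$ coordinates, and the first-$k$ entries of any $\mathfrak{B}_N$-family lie in $\mathfrak{B}_k$, so the infimum defining $\Phi_N$ is over a subset of that defining $\Phi_k$. Completeness of $X_L$ and the embedding chain \eqref{embeddings} are then routine. For r.i., $\|a\|_{X_L}\le\|a^*\|_{X_L}$ follows immediately from monotonicity and symmetry of $\|\cdot\|_{X_L(n)}$ applied to any finite truncation; conversely, for each $n$ one picks $N$ containing the indices of the top-$n$ values of $|a|$ and chains together symmetry of $\|\cdot\|_{X_L(N)}$, monotonicity (zeroing out all but the top-$n$ coordinates), and the consistency inequality above to arrive at $\|(a_1,\dots,a_N)\|_{X_L(N)}\ge\|(a_1^*,\dots,a_n^*)\|_{X_L(n)}$.

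The Banach sequence lattice structure of $X_U$ is immediate since $\|\cdot\|_{X_U(n)}$ is a supremum of lattice seminorms. The r.i.\ claim reduces by the same scheme to the consistency $\|(c,0,\dots,0)\|_{X_U(N)}=\|c\|_{X_U(n)}$; here the $\le$ direction is automatic (supremum over a subset), and the $\ge$ direction is equivalent to extending near-optimal $\mathfrak{B}_n$-families to $\mathfrak{B}_N$-families (or approximating them by extendable ones). This is where the hypothesis enters: by Propositions~\ref{Prop_not_A-infinity} and~\ref{Prop_dia}, $\ell_\infty$ not being finitely lattice representable in $X$ forces $X$ to be $\sigma$-order complete with $\sigma$-order continuous norm and to satisfy a lower $p$-estimate for some $p<\infty$. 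Given a near-optimal $\{x_i\}_{i=1}^n\in\mathfrak{B}_n(X)$ and $\varepsilon>0$, I would split each $x_i=x_i'+z_i$ disjointly with $\|z_i\|<\delta$ (exploiting order continuity), normalize to $\tilde x_i=x_i'/\|x_i'\|$, and use the residual bands of the $z_i$'s---which are non-trivial and disjoint from $\{\tilde x_i\}$---to host arbitrarily many disjoint unit-norm extensions $y_{n+1},\dots,y_N$. The lower $p$-estimate then guarantees $\|\sum c_i\tilde x_i\|_X$ stays within a factor $1-C\varepsilon$ of $\|\sum c_ix_i\|_X$. The hardest step is precisely this splitting-extension argument, in particular handling configurations whose $x_i$ have atomic components not directly splittable; there one must exploit the abundance of atoms supplied by infinite-dimensionality together with the quantitative control of the lower $p$-estimate to locate the needed extensions.
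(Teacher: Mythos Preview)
Your overall scheme matches the paper's: both reduce the problem to (a) lattice-monotonicity and permutation-symmetry of the finite-dimensional norms $\|\cdot\|_{X_L(n)}$, $\|\cdot\|_{X_U(n)}$, (b) contractivity of the restriction maps $I_{n+1}:\mathbb{R}^{n+1}\to\mathbb{R}^n$ with respect to these norms, and (c) for $X_U$, an extension lemma that lets any $\mathfrak{B}_n(X)$-family be approximated by one embeddable in a $\mathfrak{B}_{n+1}(X)$-family. There are, however, two genuine gaps.

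First, your r.i.\ argument for $X_L$ tacitly assumes that for each $n$ the $n$ largest values of $|a|$ occur in some finite initial segment $\{1,\dots,N\}$. This fails when $a\notin c_0$, and nothing you have written excludes $X_L\not\subset c_0$. The paper handles this by a separate dichotomy (Propositions~\ref{Prop_xl_not_in_c0} and~\ref{Prop_xl_a-oo}): if $X_L\not\subset c_0$ one extracts a uniformly bounded disjoint sequence in $X$ witnessing that $\ell_\infty$ is crudely (hence genuinely) finitely lattice representable in $X$, and then shows directly that $X_L=\ell_\infty$ isometrically, which is trivially r.i. Without this step or an equivalent, your proof of the r.i.\ of $X_L$ is incomplete.

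Second, your extension mechanism for $X_U$---split every $x_i$ as $x_i'+z_i$ with $\|z_i\|<\delta$ and populate the bands of the $z_i$ with the required new disjoint vectors---does not work as stated: if some $x_i$ is an atom, no nontrivial disjoint splitting exists, and even when the $z_i$ exist their bands may be finite-dimensional, so ``arbitrarily many'' extensions need not be available. You flag this as the hard point but do not resolve it. The paper's Lemma~\ref{Lemma_bn_extension} proceeds differently and more robustly: either some unit vector is already disjoint from $x=\bigvee_{i=1}^n x_i$ (case (i), done), or else the band of $x$ is all of $X$, so at least one band $P_{x_k}(X)$ is infinite-dimensional; one then writes $x_k$ as a sum of $m$ pairwise disjoint pieces for any $m$, and the lower $p$-estimate forces the smallest piece to have norm at most $M_{[p]}(X)\,m^{-1/p}$. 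Thus only a \emph{single} $x_k$ is perturbed, by an arbitrarily small amount, and the extension goes through with no atomicity obstruction.
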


Denote by $X_{L}^{0}$ (resp. $X_{U}^{0}$) the closed linear span of all
finitely supported sequences in $X_{L}$ (resp. $X_{U}$). Recall that $e_{m}$%
, $m=1,2,\dots$, are the unit basis vectors in spaces of real-valued
sequences.

\begin{corollary}
If $X$ is an infinite dimensional Banach lattice, then $X_{L}^{0}$ is a
Banach space in which the vectors $e_{m}$, $m=1,2,\dots$, form a symmetric
normed basis. If $l_\infty$ is not finitely representable in $X$, the same
conclusion applies also to $X_{U}^{0}$.

In particular, if $\mathrm{supp}\,a\subset \{1,2,\dots,n\}$ for some $n\in%
\mathbb{N}$, we have $\|a\|_{X_L}=\|a\|_{X_L(n)}$ and $\|a\|_{X_U}=\|a%
\|_{X_U(n)}$.
\end{corollary}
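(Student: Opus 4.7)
The plan is to first prove the ``in particular'' identity, since the basis claim will then fall out of standard Banach-lattice facts. The key observation is that for a vector $a$ with $\mathrm{supp}\,a\subset\{1,\dots,n\}$ and any $m\ge n$, one has $\Phi_m(a)=\Phi_n(a)$. One inequality, $\Phi_m(a)\le \Phi_n(a)$, follows by extending any disjoint system $\{y_i\}_{i=1}^{n}\in\mathfrak{B}_n(X)$ to a system $\{x_i\}_{i=1}^{m}\in\mathfrak{B}_m(X)$ using Lemma~\ref{Lemma_bn}. The reverse inequality comes from restricting any $\{x_i\}_{i=1}^{m}\in\mathfrak{B}_m(X)$ to its first $n$ members (still in $\mathfrak{B}_n(X)$) and noting that, since the supports are disjoint, $|\sum_{i=1}^{n}a_i x_i|\le |\sum_{i=1}^{m}a_i x_i|$ in $X$, so the norms obey the same inequality.

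Next I would lift this to the $X_L(m)$ norm via a projection argument. Let $P\colon\mathbb R^m\to\mathbb R^n$ zero out coordinates past $n$. Given any decomposition $a=\sum_{k\in F}a^k$ in $\mathbb R^m$, the truncation $a=\sum_{k\in F}Pa^k$ is a valid decomposition in $\mathbb R^n$. Applying the inequality $\Phi_n(Pa^k)\le \Phi_m(a^k)$ (same argument as above, coordinate by coordinate) gives $\|a\|_{X_L(n)}\le\|a\|_{X_L(m)}$. The reverse inequality is obtained by padding a near-optimal decomposition in $\mathbb R^n$ by zeros, using $\Phi_m(\tilde a^k)=\Phi_n(a^k)$. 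Hence $\|a\|_{X_L(m)}=\|a\|_{X_L(n)}$ for all $m\ge n$, and taking $\sup_m$ yields $\|a\|_{X_L}=\|a\|_{X_L(n)}$. For $X_U$ the argument is simpler: the defining supremum over $\mathfrak B_m(X)$, restricted to vectors supported in $[1,n]$, coincides with the supremum over $\mathfrak B_n(X)$ by the same restriction/extension of disjoint systems, so $\|a\|_{X_U(m)}=\|a\|_{X_U(n)}$ for $m\ge n$.

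For the basis statement, I invoke Theorem~\ref{Th_XL_XU_Prop}: $X_L$ is a Banach sequence lattice, so the coordinate truncation $s_n a:=(a_1,\dots,a_n,0,0,\dots)$ satisfies $|s_n a|\le|a|$, whence $\|s_n a\|_{X_L}\le\|a\|_{X_L}$. Given $a\in X_L^0$ and $\varepsilon>0$, pick a finitely supported $b$ with $\|a-b\|_{X_L}<\varepsilon$. For all $n$ large enough that $s_n b=b$, the identity $s_n a - a = s_n(a-b)-(a-b)$ combined with the contraction property yields $\|s_n a-a\|_{X_L}<2\varepsilon$, so $s_n a\to a$ and $a=\sum_m a_m e_m$ converges in $X_L$. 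This exhibits $\{e_m\}$ as a Schauder basis of $X_L^0$; symmetry (invariance under permutations and sign changes) is immediate from the lattice and rearrangement-invariant properties given by Theorem~\ref{Th_XL_XU_Prop}. Under the additional hypothesis that $l_\infty$ is not finitely lattice representable in $X$, Theorem~\ref{Th_XL_XU_Prop} gives that $X_U$ is likewise a r.i.\ Banach sequence lattice, so the identical reasoning applies to $X_U^0$.

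The main obstacle I expect is the projection argument for the $X_L$ norm, because $\Phi_m$ is in general only a seminorm built as an infimum of norms, and one must verify that passing decompositions through $P$ does not inflate the total: the key coordinate-wise inequality $\Phi_n(Pc)\le \Phi_m(c)$ relies crucially on disjointness so that the norm of a partial disjoint sum is dominated by the full disjoint sum in the ambient lattice $X$. The remaining steps, and the $X_U$ analogue, are essentially bookkeeping once this is in place.
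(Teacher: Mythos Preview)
Your overall strategy is sound and matches the spirit of the paper (which states the corollary without proof, relying on the machinery of Section~\ref{R.invariance}), but the extension step has a genuine gap. You claim $\Phi_m(a)\le\Phi_n(a)$ by extending any $\{y_i\}_{i=1}^n\in\mathfrak{B}_n(X)$ to $\{x_i\}_{i=1}^m\in\mathfrak{B}_m(X)$ via Lemma~\ref{Lemma_bn}. But Lemma~\ref{Lemma_bn} only asserts that $\mathfrak{B}_n(X)$ is nonempty; it says nothing about extending a given disjoint system. And such extension can genuinely fail: in $C[0,1]$ the constant function $1$ belongs to $\mathfrak{B}_1(X)$ but admits no nonzero disjoint companion, so it cannot be extended to $\mathfrak{B}_2(X)$. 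The same issue contaminates your padding step for $X_L(m)$, since $\Phi_m(\tilde a^k)\le\Phi_n(a^k)$ is precisely the direction that needs extension.

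The repair is to invoke Lemma~\ref{Lemma_bn_extension} instead, which (under the hypothesis that $l_\infty$ is not finitely lattice representable in $X$) gives an \emph{approximate} extension: either case~(i) extends exactly, or case~(ii) splits one $y_k$ into $u_n,u_{n+1}$ with $\|u_n-y_k\|_X\le\varepsilon$. Using symmetry of $\Phi_{n+1}$ and the permuted vector $c$ as in the proof of Proposition~\ref{Prop_xu_contractions}, one obtains $\Phi_{n+1}(\tilde a)\le\Phi_n(a)+\|a\|_\infty\varepsilon$, whence equality after $\varepsilon\to 0$ and induction on $m$. For $X_L$ in the complementary case where $l_\infty$ \emph{is} finitely lattice representable, you cannot run this argument, but Proposition~\ref{Prop_xl_a-oo} gives $X_L=l_\infty$ isometrically (indeed $\|a\|_{X_L(n)}=\|a\|_{l_\infty^n}$ for every $n$), so the identity is immediate. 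Your restriction/projection arguments and the Schauder basis deduction from Theorem~\ref{Th_XL_XU_Prop} are correct as written.
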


\begin{example}
We claim that $\left( c_{0}\right) _{U}=\left( c_{0}\right) _{L}=l_{\infty }$
and hence $\left( c_{0}\right) _{U}^{0}=\left( c_{0}\right)_{L}^0=c_{0}.$

Indeed, first by \eqref{embeddings}, we have $\left( c_{0}\right) _{U}%
\overset{1}{\hookrightarrow }\left( c_{0}\right) _{L}\overset{1}{%
\hookrightarrow }l_{\infty }.$ For the converse, fix an integer $n$ and let $%
\left\{ x_{i}\right\} _{i=1}^{n}\subseteq c_{0}$ be a positive unit norm
sequence with pair-wise disjoint support. Clearly, for all $a_{i}\in \mathbb{%
R}$, $i=1,2,\dots ,n$, 
\begin{equation*}
\left\vert \sum_{i=1}^{n}a_{i}x_{i}\right\vert \leq \sup_{1\leq n\leq
n}\left\vert a_{i}\right\vert ,
\end{equation*}%
and hence $l_{\infty }^{n}\overset{1}{\hookrightarrow }\left( c_{0}\right)
_{U}\left( n\right) $. In consequence, $l_{\infty }\overset{1}{%
\hookrightarrow }\left( c_{0}\right) _{U}.$ Thus, in view of %
\eqref{embeddings}, everything is done.
\end{example}

The above example shows that the spaces $X_{U}$ and $X_{L}$ do not need to
have $\sigma $-order continuous norm even if $X$ has so. At the same time,
the construction of the optimal upper and lower sequence spaces ensures that
they \textit{always} have the following somewhat weaker property.$\medskip $

Recall that a Banach function lattice $X$ on a measure space $(T,\mu )$ is
called \textit{order semi-continuous} if the conditions $x_{n}\in X$, $%
n=1,2,\dots $, $x\in X$ and $x_{n}\chi _{B}\rightarrow x\chi _{B}$ $\mu $%
-a.e. for each set $B\subset T$ such that $\mu (B)<\infty $ imply that $%
\Vert x\Vert _{X}\leq \liminf_{n\rightarrow \infty }\Vert x_{n}\Vert _{X}$.$%
\medskip $

In particular, Banach sequence lattice $E$ (in this case $T=\mathbb{N}$ with
the counting measure $\mu$) is order semi-continuous if $\Vert a\Vert
_{E}\leq \liminf_{n\rightarrow \infty }\Vert a^{n}\Vert _{E}$ whenever a
sequence $\{a^n\}_{n=1}^\infty\subset E$ converges coordinate-wise to $a\in
E $.

\begin{lemma}
\label{Lemma semi-continuous} $X_U$ and $X_L$ are order semi-continuous
Banach sequence lattices for each Banach lattice $X$.
\end{lemma}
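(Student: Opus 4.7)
The plan rests on the observation that both $X_{U}$ and $X_{L}$ are defined as suprema over $n\in\mathbb{N}$ of norms on the finite-dimensional spaces $\mathbb{R}^{n}$. Since all norms on a finite-dimensional real vector space are mutually equivalent, they are continuous with respect to coordinate-wise convergence. Order semi-continuity will therefore drop out after taking limits coordinate-block by coordinate-block and then a supremum over the block size.

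Concretely, suppose $\{a^{(k)}\}_{k=1}^{\infty}\subset X_{U}$ converges coordinate-wise to $a\in X_{U}$; write $\pi_{n}(b):=(b_{1},\dots,b_{n})\in\mathbb{R}^{n}$ for the truncation operator. Then $\pi_{n}(a^{(k)})\to\pi_{n}(a)$ in $\mathbb{R}^{n}$, and because $\|\cdot\|_{X_{U}(n)}$ is a norm on the finite-dimensional space $\mathbb{R}^{n}$, it is continuous with respect to this convergence, so
$$\|\pi_{n}(a)\|_{X_{U}(n)}=\lim_{k\to\infty}\|\pi_{n}(a^{(k)})\|_{X_{U}(n)}.$$
Next, directly from the definition $\|b\|_{X_{U}}=\sup_{m}\|\pi_{m}(b)\|_{X_{U}(m)}$ (specialized to $m=n$) we have $\|\pi_{n}(a^{(k)})\|_{X_{U}(n)}\le\|a^{(k)}\|_{X_{U}}$ for every $k$. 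Combining the two displays gives $\|\pi_{n}(a)\|_{X_{U}(n)}\le\liminf_{k\to\infty}\|a^{(k)}\|_{X_{U}}$, and taking the supremum over $n\in\mathbb{N}$ yields the desired inequality $\|a\|_{X_{U}}\le\liminf_{k\to\infty}\|a^{(k)}\|_{X_{U}}$.

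Exactly the same argument, with $\|\cdot\|_{X_{U}(n)}$ replaced by the norm $\|\cdot\|_{X_{L}(n)}$ on $\mathbb{R}^{n}$ introduced in Section~\ref{Sec-optimal-spaces}, proves the analogous inequality for $X_{L}$; that $X_{U}$ and $X_{L}$ are Banach sequence lattices has already been established in Theorem~\ref{Th_XL_XU_Prop}. There is no genuine obstacle in this lemma: the entire content is the continuity of norms in finite dimensions, together with the fact that, by construction, $\|a\|_{X_{U}}$ and $\|a\|_{X_{L}}$ are the suprema of the finite-dimensional norms $\|\pi_{n}(a)\|_{X_{U}(n)}$ and $\|\pi_{n}(a)\|_{X_{L}(n)}$ of the truncations.
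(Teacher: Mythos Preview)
Your proof is correct and follows essentially the same approach as the paper: both exploit that $\|a\|_{X_U}$ (resp.\ $\|a\|_{X_L}$) is the supremum over $n$ of the finite-dimensional norms $\|\pi_n(a)\|_{X_U(n)}$, and that coordinate-wise convergence implies convergence of each truncation in $\mathbb{R}^n$. The only difference is cosmetic: the paper fixes $\varepsilon>0$, chooses $m$ with $\|a\|_{X_U}\le(1+\varepsilon)\|\pi_m(a)\|_{X_U(m)}$, and then uses the lattice monotonicity of $\|\cdot\|_{X_U(m)}$ to compare $\|\pi_m(a^{(k)})\|_{X_U(m)}$ with $\|\pi_m(a)\|_{X_U(m)}$, whereas you invoke directly the continuity of any norm on the finite-dimensional space $\mathbb{R}^n$, which makes the argument marginally cleaner and avoids appealing to lattice monotonicity.
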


\begin{proof}
We prove this result only for $X_U$, because for $X_L$ this can be done in
the same way.

Assume that a sequence $\{a^n\}_{n=1}^\infty\subset X_U$ converges
coordinate-wise to an element $a\in X_U$. Let $a^n=\{a^n_i\}_{i=1}^\infty$, $%
a=\{a_i\}_{i=1}^\infty$. For arbitrary $\varepsilon>0$ select $m\in\mathbb{N}
$ so that 
\begin{equation*}
\left\Vert a\right\Vert _{X_{U}}\le (1+\varepsilon)\left\Vert
\left\{a_{i}\right\} _{i=1}^{m}\right\Vert_{X_{U}\left( m\right) }.
\end{equation*}
Then, for all sufficiently large $n\in\mathbb{N}$ we have 
\begin{equation*}
|a^n_i|\ge (1-\varepsilon)|a_i|,\;\;i=1,2,\dots,m,
\end{equation*}
whence 
\begin{equation*}
\left\Vert \left\{a^n_{i}\right\} _{i=1}^{m}\right\Vert_{X_{U}\left(
m\right) }\ge (1-\varepsilon)\left\Vert \left\{a_{i}\right\}
_{i=1}^{m}\right\Vert_{X_{U}\left( m\right) }.
\end{equation*}
Combining the above inequalities, we get 
\begin{equation*}
\left\Vert a\right\Vert _{X_{U}}\le \frac{1+\varepsilon}{1-\varepsilon}%
\left\Vert \left\{a^n_{i}\right\} _{i=1}^{m}\right\Vert_{X_{U}\left(
m\right) }
\end{equation*}
if $n\in\mathbb{N}$ is sufficiently large. This implies that 
\begin{equation*}
\left\Vert a\right\Vert _{X_{U}}\le \frac{1+\varepsilon}{1-\varepsilon}%
\liminf_{n\rightarrow \infty }\Vert a^{n}\Vert _{X_{U}}.
\end{equation*}
Since $\varepsilon>0$ is arbitrary, the desired result for $X_U$ is proved.
\end{proof}

For any sequence $a=\left\{ a_{i}\right\}_{i=1}^{\infty }$ we have $%
a_{i}=\left\langle a,e_{i}\right\rangle,$ where $\left\langle
\cdot,\cdot\right\rangle$ is the usual inner product. In what follows, the
properties of the optimal sequence spaces from the next proposition, will
play a crucial role.

\begin{proposition}
\label{Prop_main} Let $X$ be a Banach lattice.

$\left( i\right)$. For any $m\in\mathbb{N}$ and every pairwise disjoint
sequences $u_{k}\in X_U$, $k=1,2,\dots,m$, we have

\begin{equation*}
\left\Vert \sum_{k=1}^{m}u_{k}\right\Vert _{X_{U}}\leq \left\Vert
\sum_{k=1}^{m}\left\Vert u_{k}\right\Vert _{X_{U}}e_{k}\right\Vert _{X_{U}};
\end{equation*}

$\left( ii\right)$. For any $m\in\mathbb{N}$ and every pairwise disjoint
sequences $u_{k}\in X_L$, $k=1,2,\dots,m$,

\begin{equation*}
\left\Vert \sum_{k=1}^{m}\left\Vert u_{k}\right\Vert
_{X_{L}}e_{k}\right\Vert _{X_{L}}\leq \left\Vert
\sum_{k=1}^{m}u_{k}\right\Vert _{X_{L}}.
\end{equation*}
\end{proposition}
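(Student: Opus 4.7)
The plan is to prove $(i)$ and $(ii)$ separately, in both cases by fixing the ``outer'' quantifier in the defining formula of the target norm and reducing to a bound in terms of the sequence $(\|u_k\|_{X_*})_{k=1}^m$. Throughout I write $u = \sum_{k=1}^m u_k$ and $S_k := \mathrm{supp}\,u_k$; the $S_k$ are pairwise disjoint by hypothesis, and I set $c_k := \|u_k\|_{X_U}$ in $(i)$ and $c_k := \|u_k\|_{X_L}$ in $(ii)$.

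For $(i)$ I fix $n \in \mathbb{N}$ and a disjoint unit system $\{x_i\}_{i=1}^n \in \mathfrak{B}_n(X)$ and bound $\|\sum_{i=1}^n u_i x_i\|_X$ from above. Grouping the sum by the supports $S_k$ gives $\sum_i u_i x_i = \sum_k v_k$ with $v_k := \sum_{i \in S_k \cap [1,n]} u_{k,i} x_i$; these are pairwise disjoint in $X$, and applying the definition of $\|\cdot\|_{X_U(n)}$ to the restriction of $u_k$ to $[1,n]$ immediately yields $\|v_k\|_X \leq \|u_k\|_{X_U} = c_k$. Normalizing the nonzero $v_k$ to $y_k := v_k/\|v_k\|_X$ produces a disjoint unit family that can be extended to an element of $\mathfrak{B}_m(X)$ by the infinite-dimensionality of $X$; plugging this extension into the definition of $\|\cdot\|_{X_U(m)}$ gives $\|\sum_k v_k\|_X \leq \|(\|v_k\|_X)_{k=1}^m\|_{X_U}$. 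Monotonicity of the lattice $X_U$ (Theorem~\ref{Th_XL_XU_Prop}) together with $\|v_k\|_X \leq c_k$ completes the estimate, and taking the supremum over $n$ and $\{x_i\}$ gives $(i)$.

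For $(ii)$ the strategy is dual but more delicate because the $X_L$-norm is an infimum. Choose $N$ with $\bigcup_k S_k \subseteq [1,N]$ and consider an arbitrary decomposition $u = \sum_{\ell \in F} a^\ell$ with $a^\ell \in \mathbb{R}^N$. Replacing each $a^\ell$ by its restriction to $\bigcup_k S_k$ only decreases $\Phi_N$ (by monotonicity of $\Phi_N$, which follows from the lattice property of $X$) while leaving $\sum_\ell a^\ell = u$ unchanged, so one may assume $a^\ell = \sum_k a^{\ell,k}$ with $a^{\ell,k}$ supported in $S_k$. Setting $\alpha_{\ell,k} := \Phi_N(a^{\ell,k})$, the identity $u_k = \sum_\ell a^{\ell,k}$ is a decomposition of $u_k$ in $\mathbb{R}^N$, so using $\|u_k\|_{X_L(N)} = \|u_k\|_{X_L} = c_k$ from the corollary following Theorem~\ref{Th_XL_XU_Prop}, one obtains $c_k \leq \sum_\ell \alpha_{\ell,k}$. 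Now define $\beta_{\ell,k} := c_k\, \alpha_{\ell,k}/\sum_{\ell'}\alpha_{\ell',k}$ (with $0/0 := 0$; note that $\sum_{\ell'}\alpha_{\ell',k}=0$ forces $c_k=0$) and $b^\ell := \sum_k \beta_{\ell,k} e_k \in \mathbb{R}^m$; then $\sum_\ell b^\ell = \sum_k c_k e_k$ and $0 \leq \beta_{\ell,k} \leq \alpha_{\ell,k}$, so monotonicity of $\Phi_m$ gives $\Phi_m(b^\ell) \leq \Phi_m(\sum_k \alpha_{\ell,k} e_k)$.

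The key step, which I expect to be the main obstacle, is the ``descent'' inequality $\Phi_m(\sum_k \alpha_{\ell,k} e_k) \leq \Phi_N(a^\ell)$. Given any $\{x_i\}_{i=1}^N \in \mathfrak{B}_N(X)$, put $w_k := \sum_{i \in S_k}(a^{\ell,k})_i x_i$; these are pairwise disjoint in $X$, and since $\{x_i\}_{i \in S_k}$ is a disjoint unit sub-system of $X$, the definition of $\Phi_N$ gives $\|w_k\|_X \geq \alpha_{\ell,k}$. Normalizing the nonzero $w_k$ and extending to a member of $\mathfrak{B}_m(X)$ as in $(i)$, one obtains from the definition of $\Phi_m$ and monotonicity
\[
\Phi_m\bigl(\sum_k \alpha_{\ell,k} e_k\bigr) \leq \Phi_m\bigl(\sum_k \|w_k\|_X e_k\bigr) \leq \bigl\|\sum_k w_k\bigr\|_X = \bigl\|\sum_i a^\ell_i x_i\bigr\|_X.
\]
Taking the infimum over $\{x_i\}$ yields $\Phi_m(\sum_k \alpha_{\ell,k} e_k) \leq \Phi_N(a^\ell)$; chaining with the previous paragraph gives $\Phi_m(b^\ell) \leq \Phi_N(a^\ell)$, so $\|\sum_k c_k e_k\|_{X_L} \leq \sum_\ell \Phi_N(a^\ell)$, and taking the infimum over decompositions $u = \sum_\ell a^\ell$ produces $(ii)$.
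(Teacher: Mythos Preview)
Your proof is essentially correct, and part $(i)$ follows the same idea as the paper: group $\sum_i u_i x_i$ by the supports $S_k$, observe that the resulting blocks $v_k$ are pairwise disjoint with $\|v_k\|_X\le \|u_k\|_{X_U}$, and feed the normalised blocks back into the definition of $\|\cdot\|_{X_U(m)}$. The only difference is that the paper first reduces to finitely supported $u_k$ (so that, assuming $u_k\neq 0$, \emph{all} $z_k$ are nonzero and no extension is needed) and then passes to the general case via order semi-continuity (Lemma~\ref{Lemma semi-continuous}), whereas you work with arbitrary $n$ directly.

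In part $(ii)$ your argument takes a genuinely different route from the paper. The paper proves the single inequality
\[
\Big\|\sum_{k}\|v_{l,k}\|_{X_L}\,e_k\Big\|_{X_L(m)}\le \Phi_N(v_l)
\]
for the restriction $v_{l,k}$ of $v_l$ to $S_k$, and then combines over $l$ using only the triangle inequality and $\|u_k\|_{X_L}\le\sum_l\|v_{l,k}\|_{X_L}$. You instead track the $\Phi_N$-values $\alpha_{\ell,k}=\Phi_N(a^{\ell,k})$, establish the ``descent'' inequality $\Phi_m\big(\sum_k\alpha_{\ell,k}e_k\big)\le\Phi_N(a^\ell)$, and then use a rescaling $\beta_{\ell,k}=c_k\alpha_{\ell,k}/\sum_{\ell'}\alpha_{\ell',k}$ to manufacture a decomposition $\sum_k c_k e_k=\sum_\ell b^\ell$ at the level of $\mathbb{R}^m$. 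This is a valid and rather elegant alternative; the rescaling trick replaces the paper's appeal to subadditivity of $\|\cdot\|_{X_L}$, and everything stays at the level of the $\Phi$-functionals until the very end.

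Two points deserve tightening. First, the phrase ``can be extended to an element of $\mathfrak{B}_m(X)$ by the infinite-dimensionality of $X$'' is not a valid general principle: a single unit element of $C[0,1]$ with full support has no nonzero disjoint companion. In your setting the extension \emph{is} available, but for a structural reason you should make explicit. In $(ii)$, having chosen $N$ with $\bigcup_k S_k\subseteq[1,N]$ and assuming WLOG each $u_k\neq0$, every $S_k$ is a nonempty subset of $[1,N]$; hence for any $k$ with $w_k=0$ you may take $y_k:=x_{i_k}$ for some $i_k\in S_k$, and the disjointness of the $S_k$ ensures these fillers are disjoint from the nonzero $w_{k'}/\|w_{k'}\|_X$. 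In $(i)$ the cleanest fix is to follow the paper: assume WLOG $u_k\neq0$, first treat finitely supported $u_k$ (so that for $n\ge\max\bigcup_k S_k$ all $v_k\neq0$ and no extension is needed), and then pass to general $u_k$ via Lemma~\ref{Lemma semi-continuous}.

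Second, your argument for $(ii)$ begins with ``choose $N$ with $\bigcup_k S_k\subseteq[1,N]$'', which tacitly assumes the $u_k$ are finitely supported. The paper handles the general case by truncating each $u_k$ to $[1,n]$, applying the finitely supported result, and letting $n\to\infty$ using order semi-continuity of $X_L$ and continuity of the finite-dimensional norm $\|\cdot\|_{X_L(m)}$. You should add this step.
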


\begin{proof}
First, we prove $\left( i\right) $ assuming additionally that the elements $%
u_{k}$, $k=1,2,\dots ,m$, have finite support. Then, denoting $%
u:=\sum_{k=1}^{m}u_{k}$ and $n:=|\mathrm{supp}\,u|$, we get 
\begin{equation*}
u_{k}=\sum_{i\in \mathrm{supp}\,u_{k}}\left\langle u_{k},e_{i}\right\rangle
e_{i},\;\;k=1,2,\dots ,m.
\end{equation*}

Take $\left\{ x_{i}\right\} _{i=1}^{n}\in \mathfrak{B}_{n}\left( X\right) $
and put 
\begin{equation*}
z_{k}:=\sum_{i\in \mathrm{supp}\,u_k}\left\langle u_{k},e_{i}\right\rangle
x_{i},\;\;k=1,2,\dots,m.
\end{equation*}
Without any loss of generality, we may assume that $u_{k}\ge 0$ and $%
u_{k}\neq 0$ for all $k$. Hence, $z_{k}\neq 0$ for all $k=1,2,\dots,m$ and 
\begin{equation}  \label{est z}
\left\Vert z_{k}\right\Vert _{X}\leq \left\Vert \sum_{i=1}^{n}\left\langle
u_{k},e_{i}\right\rangle e_{i}\right\Vert _{X_U\left( n\right)}=\left\Vert
u_{k}\right\Vert _{X_{U}}.
\end{equation}%
Moreover, since $u_{k}$, $k=1,2,\dots,m$, are pairwise disjoint sequences
and $x_i$, $i=1,2,\dots,n$, are pairwise disjoint elements from $X$, we
infer that $z_{i}\wedge z_{j}=0$ if $i\neq j$. Thus, $\left\{
z_{k}/\left\Vert z_{k}\right\Vert _{X}:\,1 \leq k\leq m\right\} \in 
\mathfrak{B}_{m}\left( X\right)$. Consequently, from \eqref{est z} it
follows 
\begin{eqnarray*}
\left\Vert \sum_{i=1}^{n}\left\langle u,e_{i}\right\rangle
x_{i}\right\Vert_{X} &=&\left\Vert\sum_{k=1}^{m}\sum_{i=1}^{n}\left\langle
u_{k},e_{i}\right\rangle x_{i}\right\Vert
_{X}=\left\Vert\sum_{k=1}^{m}\sum_{i\in \mathrm{supp}\,u_k}\left\langle
u_{k},e_{i}\right\rangle x_{i}\right\Vert _{X} \\
&=& \left\Vert\sum_{k=1}^{m}z_k\right\Vert _{X}
=\left\Vert\sum_{k=1}^{m}\left\Vert z_{k}\right\Vert _{X}\frac{z_{k}}{%
\left\Vert z_{k}\right\Vert _{X}}\right\Vert _{X} \\
&\leq &\left\Vert \sum_{k=1}^{m}\left\Vert z_{k}\right\Vert
_{X}e_{k}\right\Vert _{X_{U}\left( m\right) }\leq \left\Vert
\sum_{k=1}^{m}\left\Vert u_{k}\right\Vert _{X_{U}}e_{k}\right\Vert _{X_{U}}.
\end{eqnarray*}%
Hence, as a sequence $\left\{ x_{i}\right\} _{i=1}^{n}\in \mathfrak{B}%
_{n}\left( X\right) $ is arbitrary, we conclude that 
\begin{equation*}
\left\Vert u\right\Vert _{X_{U}}=\left\Vert \sum_{i=1}^{n}\left\langle
u,e_{i}\right\rangle e_i \right\Vert _{X_{U}\left( n\right) }\leq \left\Vert
\sum_{k=1}^{m}\left\Vert u_{k}\right\Vert _{X_{U}}e_{k}\right\Vert _{X_{U}},
\end{equation*}
and for finitely supported sequences assertion $\left( i\right) $ is proved.

Let now $u_{k}\in X_{U}$, $k=1,2,\dots ,m$, be arbitrary pairwise disjoint
non-negative elements. Denote by $u_{k}^{(n)}$ the truncations of $u_{k}$ to
the set $\left\{ 1,\dots ,n\right\} $, that is, 
\begin{equation}
u_{k}^{(n)}:=\sum_{1\leq j\leq n,j\in \mathrm{supp}\,u_{k}}a_{j}e_{j},\;%
\;k=1,\dots ,m,  \label{aux seq}
\end{equation}%
Since $u_{k}^{(n)}$, $k=1,2,\dots ,m$, are pairwise disjoint sequences with
finite support, by the first part of the proof, we have 
\begin{equation*}
\Big\|\sum_{k=1}^{m}u_{k}^{(n)}\Big\|_{X_{U}}\leq \Big\|\sum_{k=1}^{m}\Vert
u_{k}^{(n)}\Vert _{X_{U}}e_{k}\Big\|_{X_{U}}\leq \Big\|\sum_{k=1}^{m}\Vert
u_{k}\Vert _{X_{U}}e_{k}\Big\|_{X_{U}}.
\end{equation*}%
Therefore, taking into account that the sequence $\sum_{k=1}^{m}u_{k}^{(n)}$
tends coordinate-wise to $\sum_{k=1}^{m}u_{k}$ as $n\rightarrow \infty $, by
Lemma \ref{Lemma semi-continuous}, we obtain 
\begin{equation*}
\Big\|\sum_{k=1}^{m}u_{k}\Big\|_{X_{U}}\leq \liminf_{n\rightarrow \infty }%
\Big\|\sum_{k=1}^{m}u_{k}^{(n)}\Big\|_{X_{U}}\leq \Big\|\sum_{k=1}^{m}\Vert
u_{k}\Vert _{X_{U}}e_{k}\Big\|_{X_{U}},
\end{equation*}%
which implies $\left( i\right) $ in the general case.

Proceeding with the proof of $\left( ii\right)$, we again consider first the
case when the elements $u_{k}$, $k=1,2,\dots,m$, have finite support. Let $u$%
, $n\in\mathbb{N}$, $\left\{ x_{j}\right\} _{i=1}^{n}$ and $z_k$, $%
k=1,2,\dots,m$, be defined in the same way as in the beginning of the proof
of $\left( i\right)$. Assuming as above that $u_{k}\ge 0$ and $u_{k}\ne 0$, $%
k=1,2,\dots,m$, we get $z_{k}\ne 0$, $k=1,2,\dots,m$, and $n\ge m$.
Consequently, $\left\{ z_{k}/\left\Vert z_{k}\right\Vert_{X}\right\}
_{k=1}^{m}\in \mathfrak{B}_{m}\left( X\right) .$ Moreover, by the definition
of the norm in $X_L$, we have 
\begin{equation*}
\left\Vert u_{k}\right\Vert _{X_{L}}\leq \Phi _{n}\left( u_{k}\right)
\leq\left\Vert z_{k}\right\Vert _{X},\;\;k=1,2,\dots,m.
\end{equation*}

Hence, by Theorem \ref{Th_XL_XU_Prop}, it follows 
\begin{eqnarray*}
\left\Vert \sum_{k=1}^{m}\left\Vert u_{k}\right\Vert
_{X_{L}}e_{k}\right\Vert _{X_{L}\left( m\right) } &\leq &\left\Vert
\sum_{k=1}^{m}\left\Vert z_{k}\right\Vert _{X}e_{k}\right\Vert _{X_{L}\left(
m\right) }\leq \left\Vert \sum_{k=1}^{m}\left\Vert z_{k}\right\Vert _{X}%
\frac{z_{k}}{\left\Vert z_{k}\right\Vert _{X}}\right\Vert _{X} \\
&=&\left\Vert \sum_{k=1}^{m}z_{k}\right\Vert _{X}=\left\Vert
\sum_{k=1}^{m}\sum_{j\in \sigma _{k}}\left\langle u_{k},e_{j}\right\rangle
x_{j}\right\Vert _{X}=\left\Vert \sum_{j=1}^{m}\left\langle
u,e_{j}\right\rangle x_{j}\right\Vert _{X}.
\end{eqnarray*}%
Passing to the infimum over all $\left\{ x_{j}\right\} _{i=1}^{n}\in 
\mathfrak{B}_{n}\left( X\right) $, we obtain 
\begin{equation}
\left\Vert \sum_{k=1}^{m}\left\Vert u_{k}\right\Vert
_{X_{L}}e_{k}\right\Vert _{X_{L}\left( m\right) }\leq \Phi _{n}\left(
u\right) .  \label{est for XL}
\end{equation}

Next, let $u=\sum_{l\in F}v_{l}$ for some finite set $F$ of positive
integers. Clearly, we may assume that the supports of $v_{l}$ are contained
in that of $u$ and hence in the set $\cup _{k=1}^{m}{\mathrm{supp}\,u_{k}}$.
Then, if 
\begin{equation*}
v_{l,k}:=\sum_{i\in \mathrm{supp}\,u_{k}}\left\langle
v_{l},e_{i}\right\rangle e_{i},\;\;k=1,2,\dots ,m,
\end{equation*}%
we have $v_{l}=\sum_{k=1}^{m}v_{l,k}$, $l\in F$, and $u_{k}=\sum_{l\in
F}v_{l,k}$, $k=1,2,\dots ,m$. Furthermore, since $u_{k}^{l}$, $k=1,2,\dots
,m $, are pairwise disjoint and have finite support, applying \eqref{est for
XL} for $v_{l}$, we infer 
\begin{equation*}
\left\Vert \sum_{k=1}^{m}\left\Vert v_{l,k}\right\Vert
_{X_{L}}e_{k}\right\Vert _{X_{L}\left( m\right) }\leq \Phi _{n}\left(
v_{l}\right) ,\;\;l\in F.
\end{equation*}%
Hence, by the triangle inequality, 
\begin{eqnarray*}
\left\Vert \sum_{k=1}^{m}\left\Vert u_{k}\right\Vert
_{X_{L}}e_{k}\right\Vert _{X_{L}\left( m\right) } &\leq &\left\Vert
\sum_{k=1}^{m}\sum_{l\in F}\left\Vert v_{l,k}\right\Vert
_{X_{L}}e_{k}\right\Vert _{X_{L}\left( m\right) } \\
&\leq &\sum_{l\in F}\left\Vert \sum_{k=1}^{m}\left\Vert v_{l,k}\right\Vert
_{X_{L}}e_{k}\right\Vert _{X_{L}\left( m\right) } \\
&\leq &\sum_{l\in F}\Phi _{n}\left( v_{l}\right) .
\end{eqnarray*}%
Since the above representation of $u$ is arbitrary, from Theorem \ref%
{Th_XL_XU_Prop} it follows 
\begin{equation*}
\left\Vert \sum_{k=1}^{m}\left\Vert u_{k}\right\Vert
_{X_{L}}e_{k}\right\Vert _{X_{L}}=\left\Vert \sum_{k=1}^{m}\left\Vert
u_{k}\right\Vert _{X_{L}}e_{k}\right\Vert _{X_{L}\left( m\right) }\leq \Vert
u\Vert _{X_{L}\left( n\right) }=\Vert u\Vert _{X_{L}}.
\end{equation*}%
Thus, for sequences with finite support $\left( ii\right) $ is proved.

To extend the assertion $\left( ii\right) $ to the general case, assume that 
$u_{k}\in X_{L}$, $k=1,\dots ,m$, are pairwise disjoint and non-negative.
Let $n\in \mathbb{N}$ be arbitrary and $u_{k}^{(n)}$ be the truncations
defined by formula \eqref{aux seq}. Since $u_{k}^{(n)}$, $k=1,\dots ,m$, are
finitely supported, as was already proved, it holds 
\begin{equation*}
\Big\|\sum_{k=1}^{m}u_{k}\Big\|_{X_{L}}\geq \Big\|\sum_{k=1}^{m}u_{k}^{(n)}%
\Big\|_{X_{L}}\geq \Big\|\sum_{k=1}^{m}\Vert u_{k}^{(n)}\Vert _{X_{L}}e_{k}%
\Big\|_{X_{L}}.
\end{equation*}%
Observe that from Lemma \ref{Lemma semi-continuous} it follows $%
\lim_{n\rightarrow \infty }\Vert u_{k}^{(n)}\Vert _{X_{L}}=\Vert u_{k}\Vert
_{X_{L}}$ for each $k=1,\dots ,m$. In consequence, we have 
\begin{equation*}
\lim_{n\rightarrow \infty }\Big\|\sum_{k=1}^{m}\Vert u_{k}^{(n)}\Vert
_{X_{L}}e_{k}\Big\|_{X_{L}}=\Big\|\sum_{k=1}^{m}\Vert u_{k}\Vert
_{X_{L}}e_{k}\Big\|_{X_{L}}.
\end{equation*}%
Combining this together with the preceding estimate, we infer that 
\begin{equation*}
\Big\|\sum_{k=1}^{m}u_{k}\Big\|_{X_{L}}\geq \Big\|\sum_{k=1}^{m}\Vert
u_{k}\Vert _{X_{L}}e_{k}\Big\|_{X_{L}},
\end{equation*}%
and so the proof is completed.
\end{proof}

By Theorem \ref{Th_XL_XU_Prop}, both spaces $X_{U}$ and $X_{L}$ are Banach
lattices and hence $X_{U}$- and $X_{L}$-constructions can be applied also to
them. However, this process terminates already on the second step, because
of the following result.

\begin{proposition}
\label{termination} (a) For every Banach sequence lattice $E$ we have $E%
\overset{1}{\hookrightarrow} E_L$. If additionally $E$ is order
semi-continuous, then $E_U\overset{1}{\hookrightarrow} E$.

(b) For every Banach lattice $X$ we have $(X_L)_L=X_L$ and $(X_U)_U=X_U$
isometrically.
\end{proposition}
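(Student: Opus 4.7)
The plan is to split each of the equalities in part (b) into a trivial direction coming from part (a) (applied to $E=X_L$ or $E=X_U$) and a reverse direction coming from Proposition~\ref{Prop_main}. The whole argument rests on the observation that, in every Banach sequence lattice we encounter here --- in particular $X_L$ and $X_U$ --- the canonical basis vectors have norm one, so $\{e_1,\dots,e_n\}\in\mathfrak{B}_n(\cdot)$ is always an admissible choice in the definitions of $\Phi_n$ and $\|\cdot\|_{E_U(n)}$.

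For part (a), let $a$ be finitely supported in $\{1,\dots,n\}$. The trivial one-term decomposition combined with the test family $x_i=e_i$ fed into $\Phi_n(a)$ immediately gives $\|a\|_{E_L(n)}\le \Phi_n(a)\le \|a\|_E$; the same choice fed into the supremum defining $\|a\|_{E_U(n)}$ gives $\|a\|_E\le \|a\|_{E_U(n)}$. The first inequality passes to general $a\in E$ by lattice monotonicity of truncations and $\sup_n$. For the second, order semi-continuity of $E$ supplies $\|a\|_E\le \liminf_n\|a^{(n)}\|_E$, so the bound $\|a^{(n)}\|_E\le \|a^{(n)}\|_{E_U}\le \|a\|_{E_U}$ extends as needed.

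For part (b), Theorem~\ref{Th_XL_XU_Prop} and Lemma~\ref{Lemma semi-continuous} show that $X_L,X_U$ are order semi-continuous Banach sequence lattices with canonical basis vectors of norm $1$, so part (a) yields $X_L\overset{1}{\hookrightarrow}(X_L)_L$ and $(X_U)_U\overset{1}{\hookrightarrow}X_U$. For the reverse embeddings, fix finitely supported $a$ and any $\{y_i\}\in\mathfrak{B}_n(X_L)$; the disjoint family $u_i:=a_iy_i\in X_L$ has $\|u_i\|_{X_L}=|a_i|$, so Proposition~\ref{Prop_main}(ii) gives
\[
\|a\|_{X_L}=\Big\|\sum_i |a_i|\,e_i\Big\|_{X_L}\le \Big\|\sum_i a_i y_i\Big\|_{X_L}.
\]
Taking infima over $\{y_i\}$ and over decompositions of $a$, combined with the triangle inequality in $X_L$, yields $\|a\|_{X_L}\le \|a\|_{(X_L)_L(n)}$. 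The dual computation using Proposition~\ref{Prop_main}(i) with $\{y_i\}\in\mathfrak{B}_n(X_U)$ symmetrically produces $\|a\|_{(X_U)_U(n)}\le \|a\|_{X_U}$.

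The only nontrivial remaining step --- really the one technical obstacle --- is the passage from finitely supported to arbitrary sequences, for which the identity $\|a\|_Y=\sup_n\|a^{(n)}\|_Y$ in every order semi-continuous sequence lattice $Y$ (available for $X_L,X_U,(X_L)_L,(X_U)_U$ via Lemma~\ref{Lemma semi-continuous}) is exactly the right tool. Combining the two-sided finite-support inequalities then extends to all $a$ and delivers the two isometric identities in~(b).
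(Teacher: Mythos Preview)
Your proposal is correct and follows essentially the same route as the paper's own proof: part~(a) via the test family $x_i=e_i$, and the nontrivial halves of part~(b) via Proposition~\ref{Prop_main} applied to the disjoint elements $a_iy_i$ for $\{y_i\}\in\mathfrak{B}_n(X_L)$ (resp.\ $\mathfrak{B}_n(X_U)$), followed by the infimum/supremum and a passage from finitely supported to general sequences. One cosmetic remark: the identity $\|a\|_Y=\sup_n\|a^{(n)}\|_Y$ that you invoke for $Y\in\{X_L,X_U,(X_L)_L,(X_U)_U\}$ is actually immediate from the very \emph{definition} of these norms as $\sup_n\|(a_i)_{i=1}^n\|_{Y(n)}$ together with the Corollary to Theorem~\ref{Th_XL_XU_Prop}, so you do not really need to route it through order semi-continuity.
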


\begin{proof}
(a) We show first that $E\overset{1}{\hookrightarrow }E_{L}$. Let $n\in 
\mathbb{N}$ be arbitrary. Since $\left\{ e_{i}\right\} _{i=1}^{n}\in 
\mathfrak{B}_{n}\left( E\right) $, for every $a=(a_{i})_{i=1}^{\infty }\in E$%
, we can write 
\begin{equation*}
\Vert a\Vert _{E}\geq \Big\|\sum_{i=1}^{n}a_{i}e_{i}\Big\|_{E}\geq \Phi
_{n}((a_{i})_{i=1}^{n})\geq \Vert (a_{i})_{i=1}^{n}\Vert _{E_{L}(n)}.
\end{equation*}%
Consequently, $\Vert a\Vert _{E}\geq \Vert a\Vert _{E_{L}}$, i.e., $E\overset%
{1}{\hookrightarrow }E_{L}$.

Assume now that $E$ is order semi-continuous. Then, for every $%
a=(a_i)_{i=1}^\infty\in E_U$ and $n\in\mathbb{N}$ we have 
\begin{equation*}
\|(a_i)_{i=1}^n\|_{E_U(n)}\ge \Big\|\sum_{i=1}^n a_ie_i\Big\|_{E},
\end{equation*}
whence 
\begin{equation*}
\|a\|_{E_U}\ge\Big\|\sum_{i=1}^n a_ie_i\Big\|_{E},\;\;n\in\mathbb{N}.
\end{equation*}
Therefore, since $E$ is order semi-continuous, we get $\|a\|_{E_U}\ge
\|a\|_{E}$ for each $a\in E_U$. Thus, the proof of (a) is completed.

(b) If $X$ is an arbitrary Banach lattice, then by Lemma \ref{Lemma
semi-continuous}, $X_L$ is an order semi-continuous Banach sequence lattice.
Hence, from the already proved part (a) it follows that $X_L\overset{1}{%
\hookrightarrow} (X_L)_L$. It remains to check that $(X_L)_L\overset{1}{%
\hookrightarrow} X_L$.

Suppose $n\in\mathbb{N}$ and $b=(b_i)_{i=1}^n\in \mathbb{R}$ is arbitrary.
For every $\varepsilon>0$ there is a sequence $\left\{ u_{i}\right\}
_{i=1}^{n}\in \mathfrak{B}_{n}\left( X_L\right)$ such that 
\begin{equation*}
\Phi_n(b)\ge (1-\varepsilon)\Big\|\sum_{i=1}^n b_iu_i\Big\|_{X_L}.
\end{equation*}
Hence, by Proposition \ref{Prop_main}(ii), we obtain 
\begin{equation*}
\Phi_n(b)\ge (1-\varepsilon)\Big\|\sum_{i=1}^n b_i\|u_i\|_{X_L}e_i\Big\|%
_{X_L}=(1-\varepsilon)\Big\|\sum_{i=1}^n b_ie_i\Big\|_{X_L}.
\end{equation*}

Now, let $a=(a_i)_{i=1}^\infty\in (X_L)_L$ and $n\in\mathbb{N}$. Let $%
\sum_{i=1}^n a_ie_i=\sum_{k\in F} b^k$, where $F\subset \mathbb{N}$ is a
finite set and $b^k=(b_i^k)_{i=1}^n$, $k\in F$. Then, from the preceding
estimate and the triangle inequality it follows that 
\begin{equation*}
\sum_{k\in F}\Phi_n((b_i^k)_{i=1}^n)\ge (1-\varepsilon)\sum_{k\in F}\Big\|%
\sum_{i=1}^n b_i^k e_i\Big\|_{X_L}\ge (1-\varepsilon)\Big\|\sum_{i=1}^n a_i
e_i\Big\|_{X_L},
\end{equation*}
Hence, taking the infimum over all the above representations of $%
(a_i)_{i=1}^n$, implies 
\begin{equation*}
\|a\|_{(X_L)_L}\ge\|(a_i)_{i=1}^n\|_{(X_L)_L(n)}\ge
(1-\varepsilon)\|(a_i)_{i=1}^n\|_{X_L(n)}.
\end{equation*}
Since $\varepsilon>0$ is arbitrary and $X_L$ is order semi-continuous, one
can easily get now that $\|a\|_{(X_L)_L}\ge\|a\|_{X_L}$, and so the proof of
the equality $(X_L)_L=X_L$ is completed.

The proof of the fact that $(X_U)_U=X_U$ is very similar and simpler. Again,
in view of Lemma \ref{Lemma semi-continuous} and the part (a) of this
proposition, it suffices to show that $X_U\overset{1}{\hookrightarrow}
(X_U)_U$. Indeed, if $a=(a_i)_{i=1}^\infty\in X_U$ and $n\in\mathbb{N}$,
then for each sequence $\left\{ u_{i}\right\} _{i=1}^{n}\in \mathfrak{B}%
_{n}\left( X_U\right)$, by Proposition \ref{Prop_main}(i), we have 
\begin{equation*}
\Big\|\sum_{i=1}^n a_iu_i\Big\|_{X_U}\le\Big\|\sum_{i=1}^n
a_i\|u_i\|_{X_U}e_i\Big\|_{X_U}=\Big\|\sum_{i=1}^n a_ie_i\Big\|_{X_U}\le
\|a\|_{X_U}.
\end{equation*}
Therefore, 
\begin{equation*}
\|(a_i)_{i=1}^n\|_{(X_U)_U(n)}\le \|a\|_{X_U}\;\;\mbox{for all}\;n\in\mathbb{%
N},
\end{equation*}
whence $\|a\|_{(X_U)_U}\le \|a\|_{X_U}$. Thus, the proof of the proposition
is completed.
\end{proof}

\subsection{Optimal sequence spaces and upper/lower estimates of Banach
lattices}

As we will see in this section, properties of the spaces $X_{U}$ and $X_{L}$
are largely determined by the optimal upper and lower estimate information
related to the given Banach lattice $X$. The connections, revealed in the
next proposition, will play an important role in the proof of our main
Theorem \ref{Th_main}. Recall that $\delta \left( X\right) $ and $\sigma
\left( X\right) $ are the Grobler-Dodds indices of a Banach lattice $X$ (see
Section \ref{estimates}).

\begin{proposition}
\label{coincidence with lp} Let $X$ be a Banach lattice. Then,

(i) $X_{U}\overset{1}{\hookrightarrow }l_{\delta(X)}$ and $%
l_{p}\hookrightarrow X_{U}$ for every $p<\delta(X)$;

(ii) $l_{\sigma (X)}\overset{1}{\hookrightarrow }X_{L}$ and $%
X_{L}\hookrightarrow l_{q}$ for every $q>\sigma (X)$;

(iii) $X_U=l_p$ if and only if $p=\delta(X)$ and $X$ admits an upper $%
\delta(X)$-estimate;

(iv) $X_{L}=l_{q}$ if and only if $q=\sigma (X)$ and $X$ admits a lower $%
\sigma (X)$-estimate.
\end{proposition}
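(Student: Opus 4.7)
The plan is to treat (i) and (ii) directly from the definitions of $X_U$, $X_L$ and $\Phi_n$, invoking two standard ingredients: the behaviour of disjoint sums of elements of $X$ under upper/lower $p$-estimates, and the finite lattice representability of $l_{\delta(X)}$ and $l_{\sigma(X)}$ in $X$ (a classical consequence of Krivine-type results for Banach lattices, cf.\ \cite[Section 1.f]{LT79}). Parts (iii) and (iv) will then follow by combining these embeddings with the converse observation that $X_U \simeq l_p$ forces an upper $p$-estimate on $X$ (and similarly for $X_L$).

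For $l_p \hookrightarrow X_U$ with $p < \delta(X)$, I would apply an upper $p$-estimate with constant $M^{[p]}(X)$ directly to the definition of $\|\cdot\|_{X_U(n)}$, obtaining
\begin{equation*}
\Big\|\sum_{i=1}^n a_i x_i\Big\|_X \leq M^{[p]}(X)\Big(\sum_{i=1}^n |a_i|^p\Big)^{1/p}
\end{equation*}
for every $\{x_i\}_{i=1}^n\in \mathfrak{B}_n(X)$, and passing to the supremum. For $X_U \overset{1}{\hookrightarrow} l_{\delta(X)}$, I would fix $\varepsilon>0$ and use finite lattice representability of $l_{\delta(X)}$ in $X$ (after renormalizing the disjoint elements to unit norm at the cost of a factor $(1+\varepsilon)$) to produce $\{x_i\}\in \mathfrak{B}_n(X)$ with $\|\sum a_i x_i\|_X\geq (1+\varepsilon)^{-1}\|a\|_{l_{\delta(X)}^n}$; letting $\varepsilon\to 0$ then gives the embedding with constant one. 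Part (ii) is dual: a lower $q$-estimate with constant $M_{[q]}(X)$ gives $\Phi_n(a)\geq M_{[q]}(X)^{-1}\|a\|_{l_q^n}$, and combining this with the triangle inequality in $l_q^n$ applied to any decomposition $a=\sum_k a^k$ yields $\|a\|_{l_q^n}\leq M_{[q]}(X)\|a\|_{X_L(n)}$; while finite lattice representability of $l_{\sigma(X)}$ in $X$ supplies, via the trivial single-term decomposition, the bound $\|a\|_{X_L(n)}\leq \Phi_n(a)\leq (1+\varepsilon)\|a\|_{l_{\sigma(X)}^n}$.

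For (iii), the direction $(\Leftarrow)$ is immediate from (i): an upper $\delta(X)$-estimate upgrades the embedding $l_p \hookrightarrow X_U$ from ``every $p<\delta(X)$'' to the endpoint $p=\delta(X)$, which together with $X_U \overset{1}{\hookrightarrow} l_{\delta(X)}$ yields $X_U = l_{\delta(X)}$. For $(\Rightarrow)$, reading the $X_U$-definition on arbitrary disjoint elements $x_i=\|x_i\|\tilde x_i$ with $a_i=1$ shows that $X_U \simeq l_p$ isomorphically gives $X$ an upper $p$-estimate, hence $\delta(X)\geq p$; and the embeddings $l_{p'}\hookrightarrow X_U \simeq l_p$ from (i), valid for every $p'<\delta(X)$, force $p\geq p'$ and therefore $p\geq \delta(X)$. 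Part (iv) is completely parallel, the lower $q$-estimate being extracted from $X_L \simeq l_q$ via the corresponding disjointness argument applied to $\Phi_n$.

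The main obstacle is obtaining the sharp embedding constant \emph{one} in (i) and (ii) rather than a constant tied to $M^{[p]}(X)$ or $M_{[q]}(X)$; this is exactly what forces the use of finite lattice representability of $l_{\delta(X)}$ and $l_{\sigma(X)}$ in $X$ at the endpoint indices, a classical but nontrivial ingredient that carries the real content of the argument.
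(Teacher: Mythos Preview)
Your proposal is correct and follows essentially the same route as the paper's proof: upper/lower $p$-estimates give the non-endpoint embeddings in (i) and (ii), finite lattice representability of $l_{\delta(X)}$ and $l_{\sigma(X)}$ in $X$ gives the endpoint norm-one embeddings, and (iii)--(iv) follow by reading off an upper/lower estimate from the definition of $X_U$/$X_L$ applied to $a_i=\|x_i\|_X$. The only cosmetic difference is that the paper attributes the finite lattice representability at the Grobler--Dodds indices to Schep \cite{She92} rather than to Krivine-type results in \cite{LT79}, and in (iii)$(\Rightarrow)$ the paper deduces $p=\delta(X)$ directly from both embeddings in (i) before extracting the upper estimate, whereas you first extract the upper $p$-estimate and then combine with (i); the two orderings are equivalent.
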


\begin{proof}
(i) Let $p<\delta (X)$ and $a=(a_{k})_{k=1}^{\infty}\in l_p$. Since $X$
admits an upper $p$-estimate, then for every $n\in \mathbb{N}$ and $\left\{
x_{k}\right\} _{k=1}^{n}\in \mathfrak{B}_{n}\left( X\right)$ we have 
\begin{equation*}
\left\Vert \sum_{k=1}^{n}a_{k}x_{k}\right\Vert _{X}\leq C_{p}\left(
\sum_{k=1}^{n}|a_{k}|^{p}\right) ^{1/p},
\end{equation*}%
where $C_{p}$ depends only on $p$. Consequently, by the definition of $X_{U}$%
, we get 
\begin{equation*}
\Vert a\Vert _{X_{U}}=\sup_{n=1,2,\dots }\Vert (a_{k})_{k=1}^{n}\Vert
_{X_{U}(n)}\leq C_{p}\Vert a\Vert _{l_{p}}.
\end{equation*}

Next, suppose $a=(a_{k})_{k=1}^{\infty}\in X_U$. By Schep's result (see \cite{She92}), $l_{\delta(X)}$ is finitely lattice representable in $X$, which
implies that for every $\varepsilon >0$ and $n\in \mathbb{N}$ there exists a
sequence $\left\{ y_{k}\right\} _{k=1}^{n}\in \mathfrak{B}_{n}\left(
X\right) $ such that for any $a_{k}\in \mathbb{R}$, $k=1,2,\dots ,n$, we
have 
\begin{equation*}
\left\Vert \sum_{k=1}^{n}a_{k}y_{k}\right\Vert _{X}\geq (1-\varepsilon
)\Vert (a_{k})_{k=1}^{n}\Vert _{l_{\delta(X)}}.
\end{equation*}%
Hence, 
\begin{equation*}
\Vert (a_{k})_{k=1}^{n}\Vert _{l_{\delta(X)}}\leq \frac{1}{1-\varepsilon }%
\left\Vert \sum_{k=1}^{n}a_{k}y_{k}\right\Vert _{X}\leq \frac{1}{%
1-\varepsilon }\Vert (a_{k})_{k=1}^{n}\Vert _{X_{U}(n)}\leq \frac{1}{%
1-\varepsilon }\Vert a\Vert _{X_{U}}.
\end{equation*}%
Since $n\in \mathbb{N}$ and $\varepsilon >0$ are arbitrary, we have $\Vert
a\Vert _{l_{\delta(X)}}\leq \Vert a\Vert _{X_{U}}$, which completes the
proof of (i).

(ii) Let $q>\sigma (X)$, $n\in \mathbb{N}$ and $a=(a_{k})_{k=1}^{n}\in 
\mathbb{R}^n$. Since $X$ admits a lower $q$-estimate, then for every $%
\left\{ x_{k}\right\} _{k=1}^{n}\in \mathfrak{B}_{n}\left( X\right) $ we
have 
\begin{equation*}
\Vert a\Vert _{l_{q}^{n}}=\left( \sum_{k=1}^{n}|a_{k}|^{q}\right) ^{1/q}\leq
C_{q}\left\Vert \sum_{k=1}^{n}a_{k}x_{k}\right\Vert _{X},
\end{equation*}%
where $C_{q}$ depends only on $q$. Passing to the infimum over all sequences 
$\left\{ x_{k}\right\} _{k=1}^{n}\in \mathfrak{B}_{n}\left( X\right) $, we
come to the inequality 
\begin{equation*}
\Vert a\Vert _{l_{q}^{n}}\leq C_{q}\Phi _{n}(a).
\end{equation*}%
Next, if $a=\sum_{l\in F}b^{l}$, where $F\subset \mathbb{N}$ is finite and $%
b^{l}\in \mathbb{R}^{n}$, we have 
\begin{equation*}
\Vert a\Vert _{l_{q}^{n}}\leq \sum_{l\in F}\Vert b^{l}\Vert _{l_{q}^{n}}\leq
C_{q}\sum_{l\in F}\Phi _{n}(b^{l}).
\end{equation*}%
Passing to the infimum over all above representations of $a$, we obtain 
\begin{equation*}
\Vert a\Vert _{l_{q}^{n}}\leq C_{q}\Vert a\Vert _{X_{L}(n)}.
\end{equation*}%
Since this holds for every $n\in \mathbb{N}$ and $a=(a_{k})_{k=1}^{n}\in 
\mathbb{R}^{n}$, by using Lemma \ref{Lemma semi-continuous}, we conclude
that $X_{L}\hookrightarrow l_{q}$.

Further, again appealing to \cite{She92}, we have that $l_{\sigma (X)}$ is
finitely lattice representable in $X$. Therefore, for every $\varepsilon >0$
and $n\in \mathbb{N}$ there exists a sequence $\left\{ y_{k}\right\}
_{k=1}^{n}\in \mathfrak{B}_{n}\left( X\right) $ such that for any $%
a=(a_{k})_{k=1}^n\in \mathbb{R}^n$ it holds 
\begin{equation*}
\left\Vert \sum_{k=1}^{n}a_{k}y_{k}\right\Vert _{X}\leq (1+\varepsilon
)\Vert a\Vert _{l_{\sigma (X)}}.
\end{equation*}%
In consequence, by the definition of the $X_{L}(n)$-norm, 
\begin{equation*}
\Vert a\Vert _{X_{L}(n)}\leq \Phi_{n}(a)\leq (1+\varepsilon )\Vert a\Vert
_{l_{\sigma (X)}},
\end{equation*}%
and hence again for each $a\in l_{\sigma(X)}$ and any $\varepsilon >0$ it
follows that 
\begin{equation*}
\Vert a\Vert _{X_{L}}\leq (1+\varepsilon )\Vert a\Vert _{l_{\sigma (X)}}.
\end{equation*}%
Application of Lemma \ref{Lemma semi-continuous} again completes the proof.

(iii) If $X$ admits an upper $\delta(X)$-estimate, the same argument as in
the proof of (i) implies that $l_{\delta(X)}\hookrightarrow X_{U}$.
Combining this with the first embedding in (i), we get that $%
X_{U}=l_{\delta(X)}$.

Conversely, let $X_U=l_{p}$ for some $p\ge 1$. Then, from (i) it follows
immediately that $p$ should be equal to $\delta(X)$. It remains to show that 
$X$ admits an upper $\delta(X)$-estimate.

Suppose that $x_{k}\in X$, $k=1,2,\dots ,n$, are arbitrary pair-wise
disjoint elements. Then, by the definition of the $X_{U}(n)$-norm and the
fact that $X_U=l_{\delta(X)}$, we have 
\begin{equation*}
\left\Vert \sum_{k=1}^{n}x_{k}\right\Vert _{X}\leq \left\Vert
\sum_{k=1}^{n}\|x_{k}\|_Xe_k\right\Vert_{X_{U}(n)}\leq C\left
(\sum_{k=1}^{n}\|x_{k}\|_X^{\delta(X)}\right)^{1/\delta(X)},
\end{equation*}%
where $C$ does not depend on $n$ and $x_{k}$. This means that $X$ admits an
upper $\delta(X)$-estimate.

(iv) If $X$ admits a lower $\sigma(X)$-estimate, then from (ii) it follows
immediately then $X_L=l_{\sigma(X)}$.

Conversely, if $X_{L}=l_{q}$, then, by (ii), $q=\sigma (X)$. Consequently,
for every pair-wise disjoint $x_{k}\in X$, $k=1,2,\dots ,n$, by the
definition of the $X_{L}(n)$-norm, it follows 
\begin{equation*}
\left (\sum_{k=1}^{n}\|x_{k}\|_X^{\sigma (X)}\right)^{1/\sigma (X)} \leq
C\left\Vert \sum_{k=1}^{n}\|x_{k}\|_Xe_k\right\Vert_{X_{L}(n)}\leq
C\left\Vert\sum_{k=1}^{n}x_{k}\right\Vert _{X}.
\end{equation*}%
Therefore, $X$ admits a lower $\sigma (X)$-estimate, and the proof is
complete.
\end{proof}

In some cases, an application of the last proposition allows to find
immediately the optimal sequence spaces.

\begin{example}
\label{Lorentz} Let $1<p<\infty $, $1\leq q<\infty $ and let $%
L_{p,q}=L_{p,q}(I)$ be the Lorentz space, where $I=[0,1]$ or $(0,\infty )$
(see Section \ref{RI}). It is well known that $\delta(L_{p,q})=\min (p,q)$, $%
\sigma (L_{p,q})=\max (p,q)$, and moreover, that $L_{p,q}$ admits an upper $%
\delta(L_{p,q})$-estimate and a lower $\sigma (L_{p,q})$-estimate (see e.g. 
\cite[Theorem~3]{D01}). Consequently, by Proposition \ref{coincidence with
lp}, $(L_{p,q})_{U}=l_{\min (p,q)}$ and $(L_{p,q})_{L}=l_{\max (p,q)}$.
\end{example}

\begin{corollary}
\label{cor2} Let $X$ be a Banach lattice. Then, $\delta(X_{U})=\delta(X)$
and $\sigma (X_{L})=\sigma (X)$.
\end{corollary}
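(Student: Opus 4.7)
The plan is to combine the embeddings from Proposition \ref{coincidence with lp} with the disjoint-sum inequalities of Proposition \ref{Prop_main}. Throughout, note that the chain \eqref{embeddings} forces $\|e_k\|_{X_U}=\|e_k\|_{X_L}=1$ for every $k$.

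First I would handle the inequality $\delta(X_U)\geq \delta(X)$. Fix $p<\delta(X)$. By Proposition \ref{coincidence with lp}(i), $l_p\hookrightarrow X_U$, say with constant $C_p$. For any pairwise disjoint $u_1,\dots,u_m\in X_U$, Proposition \ref{Prop_main}(i) followed by the embedding $l_p\hookrightarrow X_U$ gives
\[
\Big\|\sum_{k=1}^m u_k\Big\|_{X_U}\leq \Big\|\sum_{k=1}^m \|u_k\|_{X_U}e_k\Big\|_{X_U}\leq C_p\Big(\sum_{k=1}^m \|u_k\|_{X_U}^p\Big)^{1/p},
\]
so $X_U$ satisfies an upper $p$-estimate. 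Taking supremum over $p<\delta(X)$ yields $\delta(X_U)\geq\delta(X)$.

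Next, for $\delta(X_U)\leq \delta(X)$: if $X_U$ admits an upper $p$-estimate with constant $M$, apply it to the pairwise disjoint unit vectors $e_1,\dots,e_n\in X_U$ and use the embedding $X_U\overset{1}{\hookrightarrow} l_{\delta(X)}$ from Proposition \ref{coincidence with lp}(i):
\[
n^{1/\delta(X)}=\Big\|\sum_{k=1}^n e_k\Big\|_{l_{\delta(X)}}\leq \Big\|\sum_{k=1}^n e_k\Big\|_{X_U}\leq Mn^{1/p}.
\]
Letting $n\to\infty$ forces $p\leq\delta(X)$, hence $\delta(X_U)\leq \delta(X)$.

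The argument for $\sigma(X_L)=\sigma(X)$ is entirely symmetric. For the inequality $\sigma(X_L)\leq\sigma(X)$, fix $q>\sigma(X)$; by Proposition \ref{coincidence with lp}(ii), $X_L\hookrightarrow l_q$ with some constant $C_q$, and then for pairwise disjoint $u_1,\dots,u_m\in X_L$, Proposition \ref{Prop_main}(ii) gives
\[
\Big(\sum_{k=1}^m \|u_k\|_{X_L}^q\Big)^{1/q}=\Big\|\sum_{k=1}^m \|u_k\|_{X_L}e_k\Big\|_{l_q}\leq C_q\Big\|\sum_{k=1}^m \|u_k\|_{X_L}e_k\Big\|_{X_L}\leq C_q\Big\|\sum_{k=1}^m u_k\Big\|_{X_L},
\]
so $X_L$ satisfies a lower $q$-estimate and hence $\sigma(X_L)\leq q$. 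For the reverse inequality $\sigma(X_L)\geq\sigma(X)$, I would use the embedding $l_{\sigma(X)}\overset{1}{\hookrightarrow} X_L$ and apply any lower $q$-estimate for $X_L$ to the vectors $e_1,\dots,e_n\in X_L$, obtaining $n^{1/q}\leq M\|\sum_{k=1}^n e_k\|_{X_L}\leq Mn^{1/\sigma(X)}$, whence $q\geq\sigma(X)$. Since there is nothing subtle to overcome—the heavy lifting is already in Propositions \ref{coincidence with lp} and \ref{Prop_main}—the main (and only) care needed is to chain the right embedding with the right inequality in each of the four bounds.
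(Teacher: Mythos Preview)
Your proof is correct and follows essentially the same approach as the paper: both combine Proposition \ref{Prop_main} with the embeddings of Proposition \ref{coincidence with lp}, first to show that $X_U$ inherits upper $p$-estimates for all $p<\delta(X)$ (resp.\ $X_L$ inherits lower $q$-estimates for all $q>\sigma(X)$), and then to rule out any larger (resp.\ smaller) index by testing on the unit vectors $e_1,\dots,e_n$. The only cosmetic difference is that the paper phrases the second step as a contradiction with general coefficients $a_k$, while you take $a_k=1$ and read off $p\le\delta(X)$ directly; the content is identical.
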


\begin{proof}
First, we claim that $X_{U}$ admits an upper $p$-estimate for every $%
p<\delta(X)$. Indeed, if $p<\delta(X)$ and $u_i\in X_U$, $i=1,\dots,n$, are
disjoint, by Propositions \ref{Prop_main} and \ref{coincidence with lp}(i),
we have 
\begin{equation*}
\Big\|\sum_{i=1}^n u_i\Big\|_{X_U}\le \Big\|\sum_{i=1}^n \|u_i\|_{X_U}e_i%
\Big\|_{{X_U}}\le C_p\Big(\sum_{i=1}^n \|u_i\|_{{X_U}}^{p}\Big)^{1/p}.
\end{equation*}
Since $p<\delta(X)$ is arbitrary, this inequality implies that $\delta(X)\le
\delta(X_{U})$. It remains to prove the opposite inequality.

Suppose that $X_{U}$ admits an upper $p$-estimate with $p>\delta(X)$. Then,
there is a constant $C>0$ such that for every $n\in \mathbb{N}$ and any $%
a_{k}\in \mathbb{R}$, $k=1,2,\dots ,n$, we have 
\begin{equation*}
\Big\|\sum_{k=1}^{n}a_{k}e_{k}\Big\|_{X_{U}}\leq C\Vert
(a_{k})_{k=1}^{n}\Vert _{l_{p}}.
\end{equation*}%
On the other hand, by Proposition \ref{coincidence with lp}(i), 
\begin{equation*}
\Big\|\sum_{k=1}^{n}a_{k}e_{k}\Big\|_{X_{U}}\geq \Vert
(a_{k})_{k=1}^{n}\Vert _{l_{\delta(X)}}
\end{equation*}%
for all $n\in \mathbb{N}$ and $a_{k}\in \mathbb{R}$, $k=1,2,\dots ,n$. Since 
$p>\delta(X)$, combining these inequalities, we come to a contradiction.
Thus, $\delta(X_{U})=\delta(X)$, as required.

Similarly, by using Propositions \ref{Prop_main} and \ref{coincidence with
lp}(ii), one can easily check that the space $X_L$ has a lower $p$-estimate
for every $p>\sigma(X)$. Therefore, $\sigma (X_{L})\le\sigma (X)$, and the
equality $\sigma (X_{L})=\sigma (X)$ will be proved, once we check that $%
X_{L}$ does not admit a lower $p$-estimate with any $p<\sigma(X)$. To the
contrary, assume that for some $p<\sigma (X)$ there is a constant $C>0$ such
that for every $n\in \mathbb{N}$ and any $a_{k}\in \mathbb{R}$, $k=1,2,\dots
,n$, 
\begin{equation*}
\Vert (a_{k})_{k=1}^{n}\Vert _{l_{p}}\leq C\left\Vert
\sum_{k=1}^{n}a_{k}e_{k}\right\Vert _{X_{L}}.
\end{equation*}%
On the other hand, from Proposition \ref{coincidence with lp}(ii) it follows
that 
\begin{equation*}
\left\Vert \sum_{k=1}^{n}a_{k}e_{k}\right\Vert _{X_{L}}\leq \Vert
(a_{k})_{k=1}^{n}\Vert _{l_{\sigma (X)}}
\end{equation*}%
for all $n\in \mathbb{N}$ and $a_{k}\in \mathbb{R}$, $k=1,2,\dots ,n$. Since
the latter estimates imply a contradiction, everything is done.
\end{proof}

\section{\label{Proof_Main_theorem}Proof of Theorem \protect\ref{Th_main}}

We start with some auxiliary assertions. $\medskip $

Our first result shows that relative $s$-decomposability of Banach lattices $%
X$ and $Y$ implies that each sequence from the space $l_{s}$ can be treated
as a multiplicator, bounded from $X_{L}$ into $Y_{U}.$

\begin{proposition}
\label{Prop_rel_decomp_mult}Let $X$ and $Y$ be relatively $s$-decomposable
Banach lattices. Then, we have 
\begin{equation*}
X_{L}\cdot l_{s}\hookrightarrow Y_{U},
\end{equation*}%
i.e., the conditions $a=\left\{ a_{i}\right\} _{i=1}^{\infty }\in X_{L}$, $%
b=\left\{b_{i}\right\} _{i=1}^{\infty }\in l_{s}$ imply $ab:=\left\{
a_{i}b_{i}\right\}_{i=1}^{\infty }\in Y_{U}$ and 
\begin{equation*}
\left\Vert ab\right\Vert _{Y_{U}}\leq D_{s}\left( X,Y\right) \left\Vert
b\right\Vert _{l_{s}}\left\Vert a\right\Vert _{X_{L}}.
\end{equation*}
\end{proposition}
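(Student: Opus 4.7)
The plan is to fix $n\in\mathbb{N}$, bound the truncated $Y_U(n)$-norm of $ab$ by a direct application of the equivalent form of Definition~\ref{Def-Rel-Decomposable} (which allows us to insert two independent scalar sequences), then replace the resulting $\Phi_n$-factor by $\|\cdot\|_{X_L(n)}$ via the infimum over decompositions in the definition of $X_L$, and finally take the supremum in $n$.

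In detail, for a fixed $n$ I pick arbitrary $\{y_i\}_{i=1}^n\in\mathfrak{B}_n(Y)$ and $\{x_i\}_{i=1}^n\in\mathfrak{B}_n(X)$ (such sequences exist by Lemma~\ref{Lemma_bn}). Applying the equivalent form of relative $s$-decomposability with $b_i$ in the role of the $l_s$-coefficients and $a_i$ in the role of the coefficients entering the $X$-norm, I get
\begin{equation*}
\Big\|\sum_{i=1}^{n} a_ib_iy_i\Big\|_Y \;\le\; D_s(X,Y)\Big(\sum_{i=1}^{n}|b_i|^s\Big)^{1/s}\Big\|\sum_{i=1}^{n} a_ix_i\Big\|_X.
\end{equation*}
Taking the infimum over $\{x_i\}_{i=1}^n\in\mathfrak{B}_n(X)$ on the right converts the $X$-norm factor into $\Phi_n((a_i)_{i=1}^{n})$, after which the supremum over $\{y_i\}_{i=1}^n\in\mathfrak{B}_n(Y)$ on the left yields
\begin{equation*}
\|(a_ib_i)_{i=1}^{n}\|_{Y_U(n)} \;\le\; D_s(X,Y)\,\|b\|_{l_s}\,\Phi_n((a_i)_{i=1}^{n}).
\end{equation*}

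To upgrade $\Phi_n$ to $\|\cdot\|_{X_L(n)}$, I decompose $(a_i)_{i=1}^{n}=\sum_{k\in F}a^k$ with $F\subset\mathbb{N}$ finite and $a^k\in\mathbb{R}^n$; since multiplication by $b$ is coordinate-wise linear, $(a_ib_i)_{i=1}^{n}=\sum_{k\in F}(a^k_ib_i)_{i=1}^{n}$, so the triangle inequality in $Y_U(n)$ together with the previous bound applied to each summand gives
\begin{equation*}
\|(a_ib_i)_{i=1}^{n}\|_{Y_U(n)} \;\le\; D_s(X,Y)\,\|b\|_{l_s}\sum_{k\in F}\Phi_n(a^k).
\end{equation*}
Passing to the infimum over all such decompositions yields
\begin{equation*}
\|(a_ib_i)_{i=1}^{n}\|_{Y_U(n)}\le D_s(X,Y)\|b\|_{l_s}\|(a_i)_{i=1}^{n}\|_{X_L(n)}\le D_s(X,Y)\|b\|_{l_s}\|a\|_{X_L},
\end{equation*}
and taking $\sup_{n}$ on the left finishes the argument.

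There is no real obstacle: the one piece of bookkeeping that matters is matching the two coefficient sequences correctly in the equivalent form of $s$-decomposability, so that the $l_s$-multiplier $b$ ends up in the $l_s$-factor while $a$ ends up inside the $X$-norm factor feeding the $\Phi_n$/$X_L$-chain. The remainder is a routine sequence of sup/inf manipulations combined with the subadditivity of the $Y_U(n)$-norm.
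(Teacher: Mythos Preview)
Your proof is correct and follows essentially the same approach as the paper: apply the two-scalar form of relative $s$-decomposability, take the supremum over $\{y_i\}\in\mathfrak{B}_n(Y)$ and infimum over $\{x_i\}\in\mathfrak{B}_n(X)$ to reach a $\Phi_n$-bound, then upgrade $\Phi_n$ to $\|\cdot\|_{X_L(n)}$ via decompositions and the triangle inequality in $Y_U(n)$, and finally pass to the supremum in $n$. The only cosmetic difference is that the paper works with an arbitrary $D>D_s(X,Y)$ and implicitly lets $D\downarrow D_s(X,Y)$ at the end (since $D_s$ is defined as an infimum), whereas you write $D_s(X,Y)$ directly.
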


\begin{proof}
Let $n$ be any positive integer and $D>D_{s}\left( X,Y\right) .$ For
arbitrary $a=\left\{ a_{i}\right\} _{i=1}^{\infty }\in X_{L}$, $%
b=\left\{b_{i}\right\} _{i=1}^{\infty }\in l_{s}$ we put $%
a^{(n)}:=\sum_{i=1}^{n}a_{i}e_{i},b^{(n)}:=\sum_{i=1}^{n}b_{i}e_{i}.$ Since $%
X$ and $Y $ are relatively $s$-decomposable, for any $\left\{ x_{i}\right\}
\in \mathfrak{B}_{n}\left( X\right)$ and $\left\{ y_{i}\right\}_{i=1}^{n}\in 
\mathfrak{B}_{n}\left( Y\right)$ it holds 
\begin{equation*}
\left\Vert \sum_{i=1}^{n}a_{i}b_{i}y_{i}\right\Vert _{Y}\leq D\left(
\sum_{i=1}^{n}\left\vert b_{i}\right\vert ^{s}\right) ^{1/s}\left\Vert
\sum_{i=1}^{n}a_{i}x_{i}\right\Vert _{X}.
\end{equation*}%
By taking the supremum over all sequences $\left\{ y_{i}\right\} _{i=1}^{n}$
and the infimum over all sequences $\left\{ x_{i}\right\} _{i=1}^{n}$ we
infer%
\begin{equation*}
\left\Vert a^{(n)}b^{(n)}\right\Vert _{Y_{U}\left( n\right) }\leq
D\left\Vert b\right\Vert _{l_{s}}\Phi _{n}\left( a^{(n)}\right)
\end{equation*}%
Next, write $a^{(n)}=\sum_{k\in F}a^{k}$ for some finite set $F\subset%
\mathbb{N}$ and $a^{k}=\{a^k_i\}_{i=1}^n$, $k\in F$. Then $%
a^{(n)}b^{(n)}=\sum_{k\in F}b^{(n)}a^{k}$ and the preceding estimate implies 
\begin{equation*}
\left\Vert a^{(n)}b^{(n)}\right\Vert _{Y_{U}\left( n\right) }\leq \sum_{k\in
F}\left\Vert b^{(n)}a^{k}\right\Vert _{Y_{U}\left( n\right) }\leq
D\left\Vert b\right\Vert _{l_{s}}\left( \sum_{k\in F}\Phi _{n}\left(
a^{k}\right) \right).
\end{equation*}%
After taking the infimum over all such decompositions of $a^{(n)}$ we obtain 
\begin{equation*}
\left\Vert a^{(n)}b^{(n)}\right\Vert _{Y_{U}\left( n\right) }\leq
D\left\Vert b\right\Vert _{l_{s}}\left\Vert a^{(n)}\right\Vert _{X_{L}\left(
n\right) },\;\; n\in\mathbb{N},
\end{equation*}%
which implies the claimed result (see also Lemma \ref{Lemma semi-continuous}%
).
\end{proof}

\begin{proposition}
\label{Lemma-1} Suppose that $X$ and $Y$ are relatively $s$-decomposable
Banach lattices for some $1\leq s\leq \infty $, $l_{p}$ is finitely lattice
representable in $Y_{U}$, where $p\leq s$.

Then, $X$ satisfies a lower $q$-estimate for every $q$ such that ${1}/{p}\ge{%
1}/{q}+1/{s}$, and $M_{\left[ q\right]}\left( X\right) \leq D_{s}\left(
X,Y\right)$.
\end{proposition}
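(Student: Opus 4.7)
The plan is to chain together three ingredients: Proposition \ref{Prop_rel_decomp_mult}, which provides the multiplier-type inclusion $X_{L}\cdot l_{s}\hookrightarrow Y_{U}$ with norm bounded by $D_{s}(X,Y)$; Proposition \ref{Prop_main}(i), which controls disjoint sequences in $Y_{U}$ by means of the canonical basis $\{e_{k}\}$; and the hypothesis that $l_{p}$ is finitely lattice representable in $Y_{U}$. The elementary bridge from $X$ into $X_{L}$ is that, for any pair-wise disjoint non-zero $x_{1},\dots ,x_{n}\in X$, the sequence $\{x_{i}/\Vert x_{i}\Vert _{X}\}_{i=1}^{n}$ lies in $\mathfrak{B}_{n}(X)$, so the definitions of $\Phi _{n}$ and $\Vert \cdot \Vert _{X_{L}(n)}$ immediately give
\[
\Big\|\sum_{i=1}^{n}\Vert x_{i}\Vert _{X}\,e_{i}\Big\|_{X_{L}}\leq \Big\|\sum_{i=1}^{n}x_{i}\Big\|_{X}.
\]
Combined with Proposition \ref{Prop_rel_decomp_mult}, this yields, for every $b=(b_{i})_{i=1}^{n}\in l_{s}^{n}$, the key intermediate estimate
\[
\Big\|\sum_{i=1}^{n}b_{i}\Vert x_{i}\Vert _{X}\,e_{i}\Big\|_{Y_{U}}\leq D_{s}(X,Y)\,\Vert b\Vert _{l_{s}}\,\Big\|\sum_{i=1}^{n}x_{i}\Big\|_{X}.
\]

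Next, I would deduce a lower $l_{p}$-bound for the canonical basis of $Y_{U}$. For each fixed $n$ and $\varepsilon >0$, finite lattice representability produces pair-wise disjoint $u_{1},\dots ,u_{n}\in Y_{U}$ with $\Vert u_{k}\Vert _{Y_{U}}\in [1,1+\varepsilon ]$ and $\Vert \sum_{k}c_{k}u_{k}\Vert _{Y_{U}}\geq \Vert c\Vert _{l_{p}^{n}}$ for all scalars $c_{k}$. Applying Proposition \ref{Prop_main}(i) to the disjoint elements $c_{k}u_{k}$ and invoking lattice monotonicity of the $Y_{U}$-norm gives
\[
\Vert c\Vert _{l_{p}^{n}}\leq \Big\|\sum_{k=1}^{n}c_{k}u_{k}\Big\|_{Y_{U}}\leq \Big\|\sum_{k=1}^{n}|c_{k}|\Vert u_{k}\Vert _{Y_{U}}\,e_{k}\Big\|_{Y_{U}}\leq (1+\varepsilon )\Big\|\sum_{k=1}^{n}c_{k}e_{k}\Big\|_{Y_{U}}.
\]
Sending $\varepsilon \to 0$ at fixed $n$ produces the clean bound $\Vert c\Vert _{l_{p}^{n}}\leq \Vert \sum_{k=1}^{n}c_{k}e_{k}\Vert _{Y_{U}}$ for every $n$ and every scalar sequence $c$.

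Specializing this lower bound at $c_{i}=b_{i}\Vert x_{i}\Vert _{X}$ and combining with the intermediate estimate gives
\[
\Big(\sum_{i=1}^{n}|b_{i}|^{p}\Vert x_{i}\Vert _{X}^{p}\Big)^{1/p}\leq D_{s}(X,Y)\,\Vert b\Vert _{l_{s}}\,\Big\|\sum_{i=1}^{n}x_{i}\Big\|_{X}.
\]
Setting $1/q_{0}:=1/p-1/s$ and taking the supremum over $b$ with $\Vert b\Vert _{l_{s}^{n}}\leq 1$, the sharpness of H\"older's inequality (the extremal $b_{i}$ is proportional to $\Vert x_{i}\Vert _{X}^{q_{0}/s}$) yields $(\sum_{i}\Vert x_{i}\Vert _{X}^{q_{0}})^{1/q_{0}}\leq D_{s}(X,Y)\,\Vert \sum_{i}x_{i}\Vert _{X}$, i.e., $M_{[q_{0}]}(X)\leq D_{s}(X,Y)$. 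For any $q\geq q_{0}$, the finite-sequence comparison $\Vert \cdot \Vert _{l_{q}^{n}}\leq \Vert \cdot \Vert _{l_{q_{0}}^{n}}$ preserves the same constant, giving $M_{[q]}(X)\leq D_{s}(X,Y)$. The boundary cases $s=\infty $ (where Proposition \ref{Prop_rel_decomp_mult} reduces to $X_{L}\hookrightarrow Y_{U}$ and the H\"older step is trivial) and $p=s$ (where $q_{0}=\infty $ and the lower estimate is automatic) fit into the same scheme.

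The main obstacle is the middle step: a priori, finite lattice representability of $l_{p}$ in $Y_{U}$ concerns some unspecified disjoint sequence, not the canonical basis $\{e_{k}\}$. Transferring the $l_{p}$-lower bound back to $\{e_{k}\}$ is exactly where Proposition \ref{Prop_main}(i) is essential—it asserts that the basis norm dominates the norm of any disjoint sequence of the same normalization, and this is the correct direction to pull the $l_{p}$-bound back onto the basis.
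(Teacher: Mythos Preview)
Your proof is correct and follows essentially the same approach as the paper: both arguments use Proposition~\ref{Prop_main}(i) to transfer the $l_{p}$-lower bound from the representing disjoint sequence in $Y_{U}$ to the canonical basis, then exploit relative $s$-decomposability, and finish with the sharp (reverse) H\"older inequality. The only difference is in packaging: the paper unpacks the definition of the $Y_{U}(n)$-norm as a supremum over $\mathfrak{B}_{n}(Y)$ and applies $s$-decomposability directly to $\{x_{i}\}\in\mathfrak{B}_{n}(X)$ and $\{y_{i}\}\in\mathfrak{B}_{n}(Y)$ (so $X_{L}$ never enters), whereas you route the same computation through Proposition~\ref{Prop_rel_decomp_mult} together with the elementary bound $\big\|\sum_{i}\|x_{i}\|_{X}e_{i}\big\|_{X_{L}}\le\big\|\sum_{i}x_{i}\big\|_{X}$. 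One small remark: that last inequality is not quite ``immediate from the definitions'' at the level of the full $X_{L}$-norm, since $\|a\|_{X_{L}}=\sup_{m}\|a\|_{X_{L}(m)}$ and you need the equality $\|a\|_{X_{L}}=\|a\|_{X_{L}(n)}$ for finitely supported $a$ (the Corollary after Theorem~\ref{Th_XL_XU_Prop}); alternatively, you can simply invoke the finite-$n$ version of Proposition~\ref{Prop_rel_decomp_mult} established in its proof.
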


\begin{proof}
Let $n$ be any positive integer and $\varepsilon >0$ be arbitrary. By the
assumption, we can find pair-wise disjoint elements $u_{i}\in Y_{U}$, $%
i=1,\dots,n$, satisfying 
\begin{equation}  \label{est1}
\left\Vert b\right\Vert _{l_{p}^{n}}\leq \left\Vert
\sum_{i=1}^{n}b_{i}u_{i}\right\Vert _{Y_{U}}\leq \left( 1+\varepsilon
\right) \left\Vert b\right\Vert _{l_{p}^{n}}
\end{equation}%
for all sequences $b=\left\{ b_{i}\right\} _{i=1}^{n}$ of scalars.

Let $\left\{ x_{i}\right\} _{i=1}^{n}\in \mathfrak{B}_{n}\left( X\right) $
and $D>D_{s}\left( X,Y\right) .$ Then for any sequence $\left\{
a_{i}\right\} _{i=1}^{n}$ of scalars, by using \eqref{est1}, Proposition \ref%
{Prop_main}(i) and relative $s$-decomposability of $X$ and $Y$, we have 
\begin{eqnarray*}
\left( \sum_{i=1}^{n}\left\vert a_{i}b_{i}\right\vert ^{p}\right) ^{1/p}
&\leq &\left\Vert \sum_{i=1}^{n}a_{i}b_{i}u_{i}\right\Vert _{Y_{U}}\leq
\left\Vert \sum_{i=1}^{n}a_{i}b_{i}\left\Vert u_{i}\right\Vert
_{Y_{U}}e_{i}\right\Vert _{Y_{U}} \\
&=&\sup \left\{ \left\Vert \sum_{i=1}^{n}a_{i}b_{i}\left\Vert
u_{i}\right\Vert _{Y_{U}}y_{i}\right\Vert _{Y}:\left\{ y_{i}\right\}
_{i=1}^{n}\in \mathfrak{B}_{n}\left( Y\right) \right\} \\
&\leq &D\left( \sum_{i=1}^{n}\left\vert b_{i}\right\vert ^{s}\right)
^{1/s}\sup_{1\leq i\leq n}\left\Vert u_{i}\right\Vert _{Y_{U}}\left\Vert
\sum_{i=1}^{n}a_{i}x_{i}\right\Vert _{X}
\end{eqnarray*}%
Consequently, since from \eqref{est1} it follows $\left\Vert
u_{i}\right\Vert _{Y_{U}}\leq 1+\varepsilon $, we get%
\begin{equation*}
\left( \sum_{i=1}^{n}\left\vert a_{i}b_{i}\right\vert ^{p}\right) ^{1/p}\leq
\left( 1+\varepsilon \right) D\left( \sum_{i=1}^{n}\left\vert
b_{i}\right\vert ^{s}\right) ^{1/s}\left\Vert
\sum_{i=1}^{n}a_{i}x_{i}\right\Vert _{X}.
\end{equation*}%
Hence, by the reverse Hölder inequality, 
\begin{equation*}
\left( \sum_{i=1}^{n}\left\vert a_{i}\right\vert ^{q}\right) ^{1/q}\leq
\left( 1+\varepsilon \right) D\left\Vert \sum_{i=1}^{n}a_{i}x_{i}\right\Vert
_{X}
\end{equation*}%
whenever ${1}/{q}\leq {1}/{p}-1/{s}$. Thus, $X$ satisfies a lower $q$%
-estimate and $M_{\left[ q\right] }(X)\leq D_{s}\left( X,Y\right) $.
\end{proof}

\begin{proposition}
\label{Lemma-2} Suppose that $X$ and $Y$ are relatively $s$-decomposable
Banach lattices for some $1\le s\le\infty$, $l_{q}$ is finitely lattice
representable in $X_{L}$ and $1/q+1/s\le 1$.

Then, $Y$ satisfies an upper $p$-estimate for every $p$ such that ${1}/{p}\ge%
{1}/{q}+1/{s}$, and $M^{\left[ p\right]}\left( Y\right) \leq D_{s}\left(
X,Y\right)$.
\end{proposition}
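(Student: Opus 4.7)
The plan is to parallel the proof of Proposition \ref{Lemma-1}, with the roles of $Y_U$ and Proposition \ref{Prop_main}(i) now played by $X_L$ and Proposition \ref{Prop_main}(ii). The target is the embedding $l_p \hookrightarrow Y_U$ with constant $D_s(X,Y)$; once this is established, combining it with the definition of the $Y_U$-norm and the normalization $\tilde{y}_i := y_i/\|y_i\|_Y \in \mathfrak{B}_n(Y)$ of pairwise disjoint $y_1,\dots,y_n \in Y$ yields the upper $p$-estimate on $Y$ with the claimed constant. It suffices to treat $1/p = 1/q + 1/s$, since an upper $p_0$-estimate implies an upper $p$-estimate for every $p \le p_0$, and the assumption $1/q + 1/s \le 1$ ensures such a $p_0 \ge 1$ exists.

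The first step is to transfer the finite lattice representability of $l_q$ in $X_L$ into the scalar comparison $\|b\|_{X_L(n)} \le \|b\|_{l_q^n}$ for every $n$ and every $b \in \mathbb{R}^n$. Fix $n$ and $\varepsilon > 0$, and choose pairwise disjoint $v_1,\dots,v_n \in X_L$ realizing the $l_q$-approximation; taking $b = e_i$ forces $1 \le \|v_i\|_{X_L} \le 1+\varepsilon$. Applying Proposition \ref{Prop_main}(ii) to the pairwise disjoint elements $\{b_i v_i\}_{i=1}^n$, then using $\|v_i\|_{X_L} \ge 1$ together with the lattice property of $X_L$, gives $\|b\|_{X_L(n)} \le (1+\varepsilon)\|b\|_{l_q^n}$; letting $\varepsilon \to 0$ produces $\|c\|_{X_L} \le \|c\|_{l_q}$ for every finitely supported sequence $c$.

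The second step is a reverse H\"older factorization combined with Proposition \ref{Prop_rel_decomp_mult}. Given $\beta$ with finite support, set $\alpha_i := |\beta_i|^{p/s}$ and $\gamma_i := |\beta_i|^{p/q}$; since $p/s + p/q = 1$, we have $|\beta_i| = \alpha_i \gamma_i$ and $\|\alpha\|_{l_s}\|\gamma\|_{l_q} = \|\beta\|_{l_p}$. Proposition \ref{Prop_rel_decomp_mult} then yields $\|\beta\|_{Y_U} = \|\alpha\gamma\|_{Y_U} \le D_s(X,Y)\|\alpha\|_{l_s}\|\gamma\|_{X_L}$, and bounding $\|\gamma\|_{X_L}$ by $\|\gamma\|_{l_q}$ via the first step gives $\|\beta\|_{Y_U} \le D_s(X,Y)\|\beta\|_{l_p}$, as desired.

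The principal technical hurdle is the first step: converting $l_q$-representability among the sequences $v_i \in X_L$ into a pointwise norm comparison on $\mathbb{R}^n$ that can be fed into the multiplier bound. Proposition \ref{Prop_main}(ii) is the decisive tool here, since its lower bound on $\|\sum b_i v_i\|_{X_L}$ in terms of the diagonal sequence $\sum \|b_i v_i\|_{X_L} e_i$ is precisely what reduces the representing system $\{v_i\}$ to the canonical unit vector basis; the $s=\infty$ case degenerates to $p=q$ with $\alpha \equiv 1$ and falls out of the same scheme.
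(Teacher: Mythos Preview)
Your proof is correct and follows essentially the same route as the paper's. The only organizational difference is that you invoke Proposition~\ref{Prop_rel_decomp_mult} as a ready-made multiplier bound (so you pass through $Y_U$ and then extract the upper $p$-estimate from the definition of the $Y_U$-norm), whereas the paper re-derives the needed inequality $\|\sum a_ib_iy_i\|_Y\le D\|a\|_{l_s}\|b\|_{X_L}$ directly at the level of $Y$ by unfolding the $X_L$-norm; both arguments then use Proposition~\ref{Prop_main}(ii) in exactly the same way to convert the $l_q$-representability in $X_L$ into $\|b\|_{X_L}\le\|b\|_{l_q}$, and both finish by the reverse H\"older factorization (your explicit choice $\alpha_i=|\beta_i|^{p/s}$, $\gamma_i=|\beta_i|^{p/q}$ is just the equality case written out).
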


\begin{proof}
Let $n$ be a positive integer and $\varepsilon >0$ be arbitrary. By the
assumption, we can select pair-wise disjoint elements $u_{i}\in X_{L}$, $%
i=1,\dots,n$, such that 
\begin{equation}  \label{est2}
\left\Vert b\right\Vert _{l_{q}^{n}}\leq \left\Vert
\sum_{i=1}^{n}b_{i}u_{i}\right\Vert _{X_{L}}\leq \left( 1+\varepsilon
\right) \left\Vert b\right\Vert _{l_{q}^{n}}
\end{equation}%
for all sequences $b=\left\{ b_{i}\right\} _{i=1}^{n}$ of scalars.

Suppose $\left\{ y_{i}\right\} _{i=1}^{n}\in \mathfrak{B}_{n}\left( Y\right) 
$ and $D>D_{s}\left( X,Y\right) .$ For each $b=\left\{ b_{i}\right\}
_{i=1}^{n}$ we write $b=\sum_{k\in F}b^{k}$, where $F\subset \mathbb{N}$ is
a finite set and $b^{k}=\{b_{i}^{k}\}_{i=1}^{n}$ are arbitrary. Then, for
every sequences $\left\{ x_{i}^{k}\right\} _{i=1}^{n}\in \mathfrak{B}%
_{n}\left( X\right) $, $k\in F$, and any sequence $a=\left\{ a_{i}\right\}
_{i=1}^{n}$ of scalars, by the triangle inequality and relative $s$%
-decomposability of $X$ and $Y$, we have 
\begin{equation*}
\left\Vert \sum_{i=1}^{n}a_{i}b_{i}y_{i}\right\Vert _{Y}\leq \sum_{k\in
F}\left\Vert \sum_{i=1}^{n}a_{i}b_{i}^{k}y_{i}\right\Vert _{Y}\leq D\left(
\sum_{i=1}^{n}\left\vert a_{i}\right\vert ^{s}\right) ^{1/s}\sum_{k\in
F}\left\Vert \sum_{i=1}^{n}b_{i}^{k}x_{i}^{k}\right\Vert _{X}.
\end{equation*}%
Therefore, taking the infimum over all sequences $\left\{ x_{i}^{k}\right\}
_{i=1}^{n}\in \mathfrak{B}_{n}\left( X\right) $ for each $k\in F$ implies
that 
\begin{equation*}
\left\Vert \sum_{i=1}^{n}a_{i}b_{i}y_{i}\right\Vert _{Y}\leq D\left\Vert
a\right\Vert _{l_{s}^{n}}\sum_{k\in F}\Phi _{n}\left( b^{k}\right) ,
\end{equation*}%
and hence 
\begin{equation*}
\left\Vert \sum_{i=1}^{n}a_{i}b_{i}y_{i}\right\Vert _{Y}\leq D\left\Vert
a\right\Vert _{l_{s}^{n}}\left\Vert b\right\Vert _{X_{L}}.
\end{equation*}%
Thus, applying Proposition \ref{Prop_main}(ii) and inequalities \eqref{est2}%
, we obtain 
\begin{eqnarray*}
\left\Vert \sum_{i=1}^{n}a_{i}b_{i}y_{i}\right\Vert _{Y} &\leq &D\left\Vert
a\right\Vert _{l_{s}^{n}}\left\Vert \sum_{i=1}^{n}b_{i}e_{i}\right\Vert
_{X_{L}}\leq D\left\Vert a\right\Vert _{l_{s}^{n}}\left\Vert
\sum_{i=1}^{n}b_{i}\frac{u_{i}}{\left\Vert u_{i}\right\Vert _{X_{L}}}%
\right\Vert _{X_{L}} \\
&\leq &D\left\Vert a\right\Vert _{l_{s}^{n}}\left\Vert \sum_{i=1}^{n}b_{i}{%
u_{i}}\right\Vert _{X_{L}}\leq D\left( 1+\varepsilon \right) \left\Vert
a\right\Vert _{l_{s}^{n}}\left\Vert b\right\Vert _{l_{q}^{n}}.
\end{eqnarray*}%
By the reverse Hölder inequality, this implies that 
\begin{equation*}
\left\Vert \sum_{i=1}^{n}c_{i}y_{i}\right\Vert _{Y}\leq D\left(
1+\varepsilon \right) \left( \sum_{i=1}^{n}\left\vert c_{i}\right\vert
^{p}\right) ^{1/p},
\end{equation*}%
whenever $\frac{1}{p}\geq \frac{1}{q}+\frac{1}{s}$. As a result, $Y$
satisfies an upper $p$-estimate and $M^{\left[ p\right] }(Y)\leq D_{s}(X,Y)$.
\end{proof}

\begin{proposition}
\label{Lemma-3} Suppose Banach lattices $X$ and $Y$ satisfy the following
conditions:

$\left( a\right)$ $X$, $Y$ are relatively s-decomposable for some $1\leq
s\le\infty $;

$\left( b\right)$ $l_{p}$ is finitely lattice representable in $Y_{U}$ ;

$\left( c\right)$ $l_{q}$ is finitely lattice representable in $X_{L}$.

Then, it holds 
\begin{equation*}
\frac{1}{p}\leq \frac{1}{q}+\frac{1}{s}
\end{equation*}
\end{proposition}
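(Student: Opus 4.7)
The plan is to dispose of the degenerate range by a trivial estimate and, in the remaining range, to chain Proposition~\ref{Lemma-2} with hypothesis~(b) via the information already available on $Y_U$.

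First, if $1/q+1/s>1$, then $p\ge 1$ gives $1/p\le 1<1/q+1/s$, and the conclusion is immediate. So assume $1/q+1/s\le 1$ and set $p^{\ast}:=1/(1/q+1/s)$. Under hypotheses (a) and (c) together with $1/q+1/s\le 1$, Proposition~\ref{Lemma-2} applies (taking the largest admissible exponent) and yields that $Y$ admits an upper $p^{\ast}$-estimate. In particular, $\delta(Y)\ge p^{\ast}$, i.e.
\[
\frac{1}{\delta(Y)}\le \frac{1}{q}+\frac{1}{s}.
\]

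Next I will use (b) to show $p\ge\delta(Y)$. By Corollary~\ref{cor2}, $\delta(Y_U)=\delta(Y)$, so for every $r<\delta(Y)$ the lattice $Y_U$ admits an upper $r$-estimate with some constant $C_r$. The finite lattice representability of $l_p$ in $Y_U$ gives, for each $n\in\mathbb{N}$ and $\varepsilon>0$, pairwise disjoint $u_1,\dots,u_n\in Y_U$ with $\|u_i\|_{Y_U}\le 1+\varepsilon$ (take $b=e_i$ in \eqref{Fin distr}) and $\bigl\|\sum_{i=1}^n u_i\bigr\|_{Y_U}\ge n^{1/p}$ (take $b=(1,\dots,1)$). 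Applying the upper $r$-estimate to these disjoint vectors,
\[
n^{1/p}\le \Big\|\sum_{i=1}^n u_i\Big\|_{Y_U}\le C_r\Big(\sum_{i=1}^n\|u_i\|_{Y_U}^{r}\Big)^{1/r}\le C_r(1+\varepsilon)\,n^{1/r}.
\]
Since $n$ is arbitrary and $C_r(1+\varepsilon)$ is fixed, this forces $1/p\le 1/r$. Letting $r\uparrow \delta(Y)$ yields $p\ge \delta(Y)$, hence $1/p\le 1/\delta(Y)$.

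Combining the two bounds,
\[
\frac{1}{p}\le \frac{1}{\delta(Y)}\le \frac{1}{q}+\frac{1}{s},
\]
which is the desired conclusion. The substantive work has already been done in Proposition~\ref{Lemma-2}; the only additional step is the observation that finite lattice representability of $l_p$ in $Y_U$ pushes $p$ above $\delta(Y)$, for which the identity $\delta(Y_U)=\delta(Y)$ from Corollary~\ref{cor2} is the key ingredient. This is not really an obstacle but simply the point where $Y_U$, rather than $Y$, enters the argument.
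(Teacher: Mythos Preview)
Your proof is correct and takes a genuinely different route from the paper's. The paper argues directly: it picks disjoint $y_i\in Y_U$ realizing $l_p$ and disjoint $x_i\in X_L$ realizing $l_q$, then chains Proposition~\ref{Prop_main}(i), the multiplier embedding $X_L\cdot l_s\hookrightarrow Y_U$ of Proposition~\ref{Prop_rel_decomp_mult}, and Proposition~\ref{Prop_main}(ii) to obtain $\|ab\|_{l_p^n}\le (1+\varepsilon)^2 D_s(X,Y)\,\|b\|_{l_s^n}\|a\|_{l_q^n}$ for all $n$, whence the H\"older relation on exponents. Your argument is modular instead: you feed hypotheses (a) and (c) into Proposition~\ref{Lemma-2} to get $\delta(Y)\ge (1/q+1/s)^{-1}$, and then separately use (b) together with $\delta(Y_U)=\delta(Y)$ from Corollary~\ref{cor2} to force $p\ge\delta(Y)$. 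Both dependencies are legitimate (neither Proposition~\ref{Lemma-2} nor Corollary~\ref{cor2} relies on the present proposition), so there is no circularity. The paper's approach is self-contained and treats (b) and (c) symmetrically through the multiplier result; yours is shorter precisely because the heavy lifting has already been packaged into Proposition~\ref{Lemma-2}, and it makes transparent that the present proposition is essentially a corollary of that result once one knows $\delta(Y_U)=\delta(Y)$.
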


\begin{proof}
\bigskip Let $n$ be a positive integer and $\varepsilon >0$ be arbitrary. By
assumption $\left( b\right) $, there exist pair-wise disjoint elements $%
y_{i}\in Y_{U}$, $i=1,\dots ,n$, such that for all scalar sequences $%
b=\left\{ b_{i}\right\} _{i=1}^{n}$ 
\begin{equation*}
\left\Vert b\right\Vert _{l_{p}^{n}}\leq \left\Vert
\sum_{i=1}^{n}b_{i}y_{i}\right\Vert _{Y_{U}(n)}\leq \left( 1+\varepsilon
\right) \left\Vert b\right\Vert _{l_{p}^{n}}.
\end{equation*}%
In the same manner, using $\left( c\right) $, we can select pair-wise
disjoint $x_{i}\in X_{L}$, $i=1,\dots ,n$, such that for all scalar
sequences $a=\left\{ a_{i}\right\} _{i=1}^{n}$ 
\begin{equation*}
\left\Vert a\right\Vert _{l_{q}^{n}}\leq \left\Vert
\sum_{i=1}^{n}a_{i}x_{i}\right\Vert _{X_{L}(n)}\leq \left( 1+\varepsilon
\right) \left\Vert a\right\Vert _{l_{q}^{n}}.
\end{equation*}%
Applying these inequalities and Propositions \ref{Prop_main}(i), \ref%
{Prop_rel_decomp_mult} and \ref{Prop_main}(ii), we obtain 
\begin{eqnarray*}
\left\Vert ab\right\Vert _{l_{p}^{n}} &\leq &\left\Vert
\sum_{i=1}^{n}a_{i}b_{i}y_{i}\right\Vert _{Y_{U}(n)}\leq \left\Vert
\sum_{i=1}^{n}a_{i}b_{i}\left\Vert y_{i}\right\Vert _{Y_{U}}e_{i}\right\Vert
_{Y_{U}(n)} \\
&\leq &\left( 1+\varepsilon \right) D_{s}\left( X,Y\right) \left\Vert
b\right\Vert _{l_{s}^{n}}\left\Vert \sum_{i=1}^{n}a_{i}e_{i}\right\Vert
_{X_{L}(n)} \\
&\leq &\left( 1+\varepsilon \right) D_{s}\left( X,Y\right) \left\Vert
b\right\Vert _{l_{s}^{n}}\left\Vert \sum_{i=1}^{n}a_{i}x_{i}\right\Vert
_{X_{L}(n)} \\
&\leq &\left( 1+\varepsilon \right) ^{2}D_{s}\left( X,Y\right) \left\Vert
b\right\Vert _{l_{s}^{n}}\left\Vert a\right\Vert _{l_{q}^{n}}.
\end{eqnarray*}%
Since $n\in \mathbb{N}$, $b=\left\{ b_{i}\right\} _{i=1}^{n}$ and $a=\left\{
a_{i}\right\} _{i=1}^{n}$ are arbitrary, the claim follows (see also Example %
\ref{ex1}).
\end{proof}

Recall that 
\begin{equation*}
s_{\max }=s_{\max }(X,Y):=\sup \{s\in \left[ 1,\infty \right] :\,X,Y%
\mbox{are $s$-decomposable}\}.
\end{equation*}

\begin{proposition}
\label{Lemma-3a} Let $X$ and $Y$ be Banach lattices such that $\delta(Y)\leq
\sigma (X)$. Then, we have 
\begin{equation}
\frac{1}{\delta\left(Y\right) }=\frac{1}{\sigma \left( X\right) }+\frac{1}{%
s_{\max }}.  \label{equ1}
\end{equation}
\end{proposition}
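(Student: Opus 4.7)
My plan is to prove the equality by establishing the two opposite inequalities, relying on the machinery of the optimal sequence spaces developed in Section \ref{Sec-optimal-spaces}.

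For the direction $\frac{1}{\delta(Y)} \le \frac{1}{\sigma(X)} + \frac{1}{s_{\max}}$, I would apply Proposition \ref{Lemma-3} with $p = \delta(Y)$ and $q = \sigma(X)$. Its hypotheses $(b)$ and $(c)$ amount to the finite lattice representability of $l_{\delta(Y)}$ in $Y_U$ and of $l_{\sigma(X)}$ in $X_L$, respectively. Both are consequences of Schep's theorem (already invoked in the proof of Proposition \ref{coincidence with lp}) combined with Corollary \ref{cor2}, which yields $\delta(Y_U) = \delta(Y)$ and $\sigma(X_L) = \sigma(X)$. For every $s < s_{\max}$, the lattices $X$ and $Y$ are relatively $s$-decomposable, so hypothesis $(a)$ is also satisfied and Proposition \ref{Lemma-3} gives $\frac{1}{\delta(Y)} \le \frac{1}{\sigma(X)} + \frac{1}{s}$; letting $s \uparrow s_{\max}$ furnishes the desired bound.

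The opposite inequality $\frac{1}{s_{\max}} \le \frac{1}{\delta(Y)} - \frac{1}{\sigma(X)}$ requires showing that every $s$ with $\frac{1}{s} > \frac{1}{\delta(Y)} - \frac{1}{\sigma(X)}$ satisfies $s \le s_{\max}$. The strict inequality $\frac{1}{\sigma(X)} + \frac{1}{s} > \frac{1}{\delta(Y)}$ lets me pick $p$ slightly below $\delta(Y)$ (or $p = 1$ if $\delta(Y) = 1$, invoking the trivial upper $1$-estimate) and set $q := (1/p - 1/s)^{-1}$; by continuity, $q > \sigma(X)$. By the very definition of the Grobler-Dodds indices, $Y$ then admits an upper $p$-estimate and $X$ a lower $q$-estimate, so Proposition \ref{Prop_estimates_decomp} concludes that $X$ and $Y$ are relatively $s$-decomposable, whence $s \le s_{\max}$.

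The point I expect to require the most care is the boundary case $\delta(Y) = \sigma(X)$, where the claimed formula forces $s_{\max} = \infty$ even when the optimal upper/lower estimates are not attained by $Y$ or $X$. The construction above nevertheless adapts: for any finite $s$ one may choose $p$ strictly below and $q$ strictly above the common index value $\delta(Y) = \sigma(X)$ with $1/p - 1/q = 1/s$, because both indices are indeed non-attained only asymptotically, and then Proposition \ref{Prop_estimates_decomp} again delivers $s$-decomposability. Hence the lower-bound argument continues to apply in this edge regime, forcing $s_{\max} = \infty$ as required.
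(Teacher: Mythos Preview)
Your proposal is correct and takes essentially the same approach as the paper: Proposition~\ref{Lemma-3} together with Schep's theorem and Corollary~\ref{cor2} for the inequality $\frac{1}{\delta(Y)}\le\frac{1}{\sigma(X)}+\frac{1}{s_{\max}}$, and Proposition~\ref{Prop_estimates_decomp} for the reverse inequality (the paper merely phrases the latter as a contradiction argument rather than your direct supremum argument). The only boundary cases requiring the obvious adjustments are $s_{\max}=1$ (where the limit $s\uparrow s_{\max}$ is vacuous and one applies Proposition~\ref{Lemma-3} directly at $s=1$) and $\sigma(X)=\infty$ (where your formula $q=(1/p-1/s)^{-1}$ should be replaced by $q=\infty$), exactly as the paper handles them.
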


\begin{proof}
Assume first that $s_{\max }>1$. Then, if $1\leq s<s_{\max }$, $X$ and $Y$
are relatively $s$-decomposable. Moreover, by Schep's result \cite{She92}, $%
l_{s\left( Y_{U}\right) }$ and $l_{\sigma \left( X_{L}\right) }$ are
finitely representable in $Y_{U}$ and $X_{L}$, respectively. Hence, all the
conditions of Proposition \ref{Lemma-3} are fulfilled and we conclude 
\begin{equation*}
\frac{1}{\delta\left( Y_{U}\right) }\leq \frac{1}{\sigma \left( X_{L}\right) 
}+\frac{1}{s}.
\end{equation*}%
On the other hand, by Corollary \ref{cor2}, $s\left( Y_{U}\right) =s\left(
Y\right) $ and $\sigma \left( X_{L}\right) =\sigma \left( X\right) $.
Consequently, we have 
\begin{equation*}
\frac{1}{\delta\left(Y\right) }\leq \frac{1}{\sigma \left( X\right) }+\frac{1%
}{s}.
\end{equation*}%
Since this holds for all $s<s_{\max }$, it follows 
\begin{equation}
\frac{1}{\delta\left(Y\right) }\leq \frac{1}{\sigma \left( X\right) }+\frac{1%
}{s_{\max }}.  \label{equ2}
\end{equation}%
Observe that the same arguments work also in the case when $s_{\max }=1$,
because every Banach couples $X$ and $Y$ are relatively $1$-decomposable.
Therefore, we again get inequality \eqref{equ2}.

For the opposite inequality, assume first that $s_{\max }=\infty $. Then, %
\eqref{equ2} implies that $\sigma \left( X\right) \leq \delta\left(Y\right) $%
. Combining this inequality with the assumption, we conclude that $\sigma
\left( X\right) =\delta\left(Y\right) $, and hence \eqref{equ2} becomes %
\eqref{equ1}.

Let now $s_{\max }<\infty $. Assume that \eqref{equ1} fails, i.e., 
\begin{equation*}
\frac{1}{\delta \left( Y\right) }<\frac{1}{\sigma \left( X\right) }+\frac{1}{%
s_{\max }}.
\end{equation*}%
If $\delta \left( Y\right) >1$ and $\sigma \left( X\right) <\infty $, we can
find $1\leq p<\delta \left( Y\right) $, $q>\sigma \left( X\right) $ and $%
s>s_{\max }$ such that $1/p=1/q+1/s$. Since $X$ satisfies a lower $q$%
-estimate and $Y$ an upper $p$-estimate, from Proposition \ref%
{Prop_estimates_decomp} it follows that $X$ and $Y$ are relatively $s$%
-decomposable, which is impossible, since $s>s_{\max }$. Thus, in this case %
\eqref{equ1} is proved.

If $\delta \left( Y\right) =1$ or $\sigma \left( X\right) =\infty $, the
proof follows by the same lines in view of the fact that each Banach lattice
admits an upper $1$-estimate and a lower $\infty $-estimate.
\end{proof}

\begin{proof}[Proof of Theorem \protect\ref{Th_main}]
We start with the case when $\delta (Y)\leq \sigma (X)$.

$\left( i\right) \Longrightarrow \left( ii\right) $. Assume first that $%
s_{\max }>1$, $\delta (Y)>1$ and $\sigma (X)<\infty $.

Let $1\leq s<s_{\max }$. Then, by Proposition \ref{Lemma-3a}, we have 
\begin{equation*}
\frac{1}{\delta \left( Y\right) }<\frac{1}{\sigma \left( X\right) }+\frac{1}{%
s}.
\end{equation*}%
Consequently, for some $1\leq p_{1}<\delta \left( Y\right) $ and $%
q_{1}>\sigma \left( X\right) $ we obtain 
\begin{equation*}
\frac{1}{p_{1}}=\frac{1}{\sigma \left( X\right) }+\frac{1}{s}\;\;\mbox{and}%
\;\;\frac{1}{\delta \left( Y\right) }=\frac{1}{q_{1}}+\frac{1}{s}.
\end{equation*}%
Since $\delta \left( Y_{U}\right) =\delta \left( Y\right) $ and $\sigma
\left( X_{L}\right) =\sigma \left( X\right) $ (see Corollary \ref{cor2}), by 
\cite{She92}, $l_{\delta \left( Y\right) }$ (resp. $l_{\sigma \left(
X\right) }$) is finitely lattice representable in $Y_{U}$ (resp. in $X_{L}$%
). Therefore, according to Propositions \ref{Lemma-1} and \ref{Lemma-2}, $X$
satisfies a lower $q_{1}$-estimate, $Y$ satisfies an upper $p_{1}$-estimate
and $M_{\left[ q_{1}\right] }\left( X\right) \leq D_{s}\left( X,Y\right) $, $%
M^{\left[ p_{1}\right] }\left( Y\right) \leq D_{s}\left( X,Y\right) $. Next,
if $1/p=1/q+1/s$, where $p<\delta \left( Y\right) $ and $q>\sigma \left(
X\right) $, we have $p<p_{1}$ and $q>q_{1}$. Hence, $X$ satisfies a lower $q$%
-estimate, $Y$ satisfies an upper $p$-estimate and 
\begin{equation*}
M_{\left[ q\right] }\left( X\right) M^{\left[ p\right] }\left( Y\right) \leq
M_{\left[ q_{1}\right] }\left( X\right) M^{\left[ p_{1}\right] }\left(
Y\right) \leq D_{s}\left( X,Y\right) ^{2}.
\end{equation*}

Suppose now that $X$ and $Y$ are relatively $s_{\max }$-decomposable. Since $%
l_{\delta \left( Y\right) }$ is finitely lattice representable in $Y_{U}$,
by Propositions \ref{Lemma-1} and \ref{Lemma-3a}, $X$ satisfies a lower $%
\sigma \left( X\right) $-estimate and $M_{\left[ \sigma \left( X\right) %
\right] }\left( X\right) \leq D_{s}\left( X,Y\right) $. In the same manner,
applying this time Proposition \ref{Lemma-2}, we infer that $Y$ satisfies an
upper $\delta \left( Y\right) $-estimate and $M^{\left[ \delta \left(
Y\right) \right] }\left( Y\right) \leq D_{s}\left( X,Y\right) $. Combining
this together with equality \eqref{equ1}, we come to the desired result.

If $s_{\max }=1$, or $\delta\left(Y\right)=1$, or $\sigma \left(
X\right)=\infty$, we can use the same arguments, taking into account that
every Banach lattices $X$ and $Y$ are relatively $1$-decomposable and each
Banach couple satisfies an upper $1$-estimate and a lower $\infty$-estimate.

$\left( ii\right)\Longrightarrow \left( i\right)$. This implication together
with the inequality 
\begin{equation*}
D_{s}\left( X,Y\right) \leq M_{\left[ q\right] }\left( X\right) M^{\left[ p%
\right] }\left( Y\right)
\end{equation*}
is an immediate consequence of Proposition \ref{Prop_estimates_decomp}.

To complete the proof of the equivalence of $\left( i\right)$, $\left(
ii\right)$ and $\left( iii\right)$ it remains now to refer to Proposition %
\ref{Prop_rel_dec_lp sp}.

Finally, let us prove the equivalence of the conditions $\sigma(X)\le
\delta(Y)$ and $s_{max}=\infty$.

If $\sigma \left( X\right)<\delta\left(Y\right)$, then $X$ satisfies a lower 
$p$-estimate and $Y$ an upper $p$-estimate for $p\in (\sigma \left(
X\right),\delta\left(Y\right))$. Therefore, by Proposition \ref%
{Prop_estimates_decomp}, $X$, $Y$ are relatively decomposable. Hence, $%
s_{max}=\infty$. If $\sigma \left( X\right)=\delta\left(Y\right)$, the same
result follows from Proposition \ref{Lemma-3a}.

On the contrary, assume that $s_{max}=\infty$. Then, $X$ and $Y$ are
relatively $s$-decomposable for each $s<\infty$. Therefore, since $%
\delta\left(Y_U\right)=\delta\left(Y\right)$ and $\sigma \left(
X_L\right)=\sigma\left( X\right)$, by Proposition \ref{Lemma-3}, we infer 
\begin{equation*}
\frac{1}{\delta\left(Y\right) }\le \frac{1}{\sigma \left( X\right) }+\frac{1%
}{s}.
\end{equation*}%
Tending $s\to\infty$, we get the required inequality, and so the proof is
completed.
\end{proof}

Recall that the main result of the paper \cite{CwNiSc03} (see Theorem 1.3)
reads that Banach function lattices $X$, $Y$ are relatively decomposable (or 
$\infty$-decomposable) if and only if there exists $p\ge 1$ such that $X$
satisfies a lower $p$-estimate and $Y$ an upper $p$-estimate. As an
immediate consequence of Theorem \ref{Th_main} and its proof we obtain the
following extension of this result to general Banach lattices.

\begin{corollary}
\label{main cor} Banach lattices $X$, $Y$ are relatively decomposable if and
only if there exists $p\ge 1$ such that $X$ satisfies a lower $p$-estimate
and $Y$ an upper $p$-estimate.
\end{corollary}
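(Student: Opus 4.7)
The plan is to deduce this from Theorem~\ref{Th_main} specialized to $s=\infty$, supplemented by the auxiliary Propositions~\ref{Lemma-1} and~\ref{Lemma-2} to settle a boundary case.

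The easy direction comes first: if $X$ admits a lower $p$-estimate and $Y$ an upper $p$-estimate, then observing that $1/p\le 1/p+1/\infty$, Proposition~\ref{Prop_estimates_decomp} applied with $s=\infty$ and $q=p$ yields $D_\infty(X,Y)<\infty$, which is exactly relative decomposability.

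For the converse, I would start by noting that relative decomposability of $X,Y$ is precisely relative $\infty$-decomposability, so $s_{\max}(X,Y)=\infty$. The final assertion of Theorem~\ref{Th_main} then forces $\sigma(X)\le\delta(Y)$. I would split into two cases. If $\sigma(X)<\delta(Y)$, any $p$ in the open interval $(\sigma(X),\delta(Y))$ works by the very definitions of the Grobler-Dodds indices: $X$ admits a lower $p$-estimate since $p>\sigma(X)$ and $Y$ admits an upper $p$-estimate since $p<\delta(Y)$.

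The main obstacle is the boundary case $\sigma(X)=\delta(Y)=:p_0$, where the open interval is empty and the common index value need not a priori be attained as a valid exponent. To overcome it, I would use Corollary~\ref{cor2} to obtain $\sigma(X_L)=p_0$ and $\delta(Y_U)=p_0$, and then invoke Schep's theorem to conclude that $l_{p_0}$ is finitely lattice representable in both $X_L$ and $Y_U$. Proposition~\ref{Lemma-1} applied with $s=\infty$ and $p=p_0$ then yields that $X$ admits a lower $p_0$-estimate, and symmetrically Proposition~\ref{Lemma-2} applied with $s=\infty$ and $q=p_0$ (its hypothesis $1/q+1/s\le 1$ reducing to $p_0\ge 1$, which is automatic) yields that $Y$ admits an upper $p_0$-estimate. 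Taking $p=p_0$ closes the argument.
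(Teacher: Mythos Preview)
Your proof is correct and follows essentially the same route as the paper, which simply declares the corollary an immediate consequence of Theorem~\ref{Th_main} and its proof. One small shortcut you missed: in your boundary case $\sigma(X)=\delta(Y)$ the hypothesis $\delta(Y)\le\sigma(X)$ of the main equivalence in Theorem~\ref{Th_main} is met, so you can invoke (i)$\Rightarrow$(ii) with $s=\infty$ directly (giving $p=q$) instead of reproving it via Schep's theorem and Propositions~\ref{Lemma-1}--\ref{Lemma-2}; what you wrote is exactly that internal argument unpacked, so this is only a matter of packaging.
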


\begin{remark}
\label{main rem} In contrast to \cite{CwNiSc03}, our definition of relative
decomposability (see Definition \ref{Def-Rel-Decomposable}) deals only with
finite sums. Thanks to that, we need not to impose on lattices $X$ and $Y$
any extra condition. In particular, if $X$ and $Y$ are Banach lattices of
measurable functions on a $\sigma$-finite measure space we omit the
assumption from \cite[Theorem~1.3]{CwNiSc03} that $Y$ has the Fatou property.
\end{remark}

From Proposition \ref{coincidence with lp} and the proof of Theorem \ref%
{Th_main} we also deduce the following result.

\begin{corollary}
\label{main cor2} If Banach lattices $X$, $Y$ are relatively $s_{max}$%
-decomposable, then $X$ admits a lower $\sigma(X)$-estimate and $Y$ admits
an upper $\delta(Y)$-estimate (equivalently, $X_L=l_{\sigma(X)}$ and $%
Y_U=l_{\delta(Y)}$).
\end{corollary}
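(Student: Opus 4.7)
The plan is to extract the relevant arguments from the $s_{\max}$-portion of the proof of Theorem \ref{Th_main}. The equivalence between the two formulations in the parenthetical needs no separate work: by Proposition \ref{coincidence with lp}(iii)--(iv), $Y_{U}=l_{\delta(Y)}$ is equivalent to $Y$ admitting an upper $\delta(Y)$-estimate, and $X_{L}=l_{\sigma(X)}$ is equivalent to $X$ admitting a lower $\sigma(X)$-estimate. So only the estimates themselves need to be established.

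For the substantive claim I would split according to the dichotomy from the final paragraph of Theorem \ref{Th_main}: either $\delta(Y)\le \sigma(X)$, or $\sigma(X)\le \delta(Y)$ which forces $s_{\max}=\infty$. In the first case, Corollary \ref{cor2} gives $\delta(Y_{U})=\delta(Y)$ and $\sigma(X_{L})=\sigma(X)$, hence by Schep's theorem (already invoked in the proof of Proposition \ref{coincidence with lp}) $l_{\delta(Y)}$ is finitely lattice representable in $Y_{U}$ and $l_{\sigma(X)}$ in $X_{L}$. Applying Proposition \ref{Lemma-1} with $p=\delta(Y)$ and $s=s_{\max}$, and using Proposition \ref{Lemma-3a}'s identity $1/\delta(Y)=1/\sigma(X)+1/s_{\max}$, one sees that $q=\sigma(X)$ sits exactly at the equality threshold of the admissibility condition $1/p\ge 1/q+1/s$; this produces the lower $\sigma(X)$-estimate for $X$. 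A symmetric application of Proposition \ref{Lemma-2} with $q=\sigma(X)$ and $s=s_{\max}$ (where the admissibility $1/\sigma(X)+1/s_{\max}=1/\delta(Y)\le 1$ is automatic) then yields the upper $\delta(Y)$-estimate for $Y$. This essentially repackages the $s_{\max}$-paragraph already executed in the proof of Theorem \ref{Th_main}.

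In the second case $\sigma(X)\le \delta(Y)$, the hypothesis reduces to relative decomposability; Corollary \ref{main cor} then supplies some $p$ with $\sigma(X)\le p\le \delta(Y)$ for which both estimates hold, settling the equality $\sigma(X)=\delta(Y)$ immediately. The main obstacle I foresee is the strict subcase $\sigma(X)<\delta(Y)$: a direct application of Proposition \ref{Lemma-1} with $s=\infty$ and $p=\delta(Y)$ only produces a lower $q$-estimate for $q\ge \delta(Y)$, which is strictly weaker than the desired lower $\sigma(X)$-estimate. I would attempt to close this gap by combining the multiplicator embedding $X_{L}\cdot l_{\infty}\hookrightarrow Y_{U}\subseteq l_{\delta(Y)}$ furnished by Proposition \ref{Prop_rel_decomp_mult} with the one-sided inclusion $l_{\sigma(X)}\hookrightarrow X_{L}$ from Proposition \ref{coincidence with lp}(ii), thereby squeezing $X_{L}$ between these two sequence spaces; any remaining slack would have to be absorbed using the sharpness of the indices provided by Corollary \ref{cor2} and the order semi-continuity of $X_{L}$ from Lemma \ref{Lemma semi-continuous}, so that the extremal lower/upper estimates are actually attained.
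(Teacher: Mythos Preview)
Your treatment of the case $\delta(Y)\le\sigma(X)$ is correct and coincides with what the paper does: the corollary is simply extracted from the $s_{\max}$-paragraph of the proof of Theorem~\ref{Th_main} (Propositions~\ref{Lemma-1}, \ref{Lemma-2} applied with $p=\delta(Y)$, $q=\sigma(X)$, $s=s_{\max}$, using the identity of Proposition~\ref{Lemma-3a}), together with Proposition~\ref{coincidence with lp}(iii)--(iv) for the parenthetical equivalence. That is all the paper intends.

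Your attempt to handle the strict case $\sigma(X)<\delta(Y)$, however, cannot succeed, because the statement is actually \emph{false} there. Take any Banach lattice $X$ with $\sigma(X)=2$ that does \emph{not} admit a lower $2$-estimate (such lattices exist; the Appendix of the paper discusses exactly this phenomenon for Orlicz spaces), and let $Y=l_{3}$. Then $\delta(Y)=3>2=\sigma(X)$; choosing any $p\in(2,3)$, $X$ has a lower $p$-estimate and $Y$ an upper $p$-estimate, so by Proposition~\ref{Prop_estimates_decomp} the pair is relatively decomposable and $s_{\max}=\infty$. Yet $X$ fails the claimed lower $\sigma(X)$-estimate. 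So no squeezing between $l_{\sigma(X)}\hookrightarrow X_{L}$ and $X_{L}\hookrightarrow Y_{U}\subseteq l_{\delta(Y)}$ can recover the conclusion, and your sentence ``settling the equality $\sigma(X)=\delta(Y)$ immediately'' is already mistaken: the existence of a common $p$ only yields $\sigma(X)\le p\le\delta(Y)$, not equality of the endpoints. The corollary should be read under the standing hypothesis $\delta(Y)\le\sigma(X)$ inherited from Theorem~\ref{Th_main}; the paper's one-line proof tacitly assumes this.
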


\vskip0.5cm

\section{Applications to interpolation theory: Calderón-Mityagin couples of
type $s$.}

\label{Inter}

In this section, we freely use notation and results from interpolation
theory as in \cite{BK91}, \cite{BeLo76}, \cite{BrKeSe88}.$\medskip \medskip $

Let $\left( X,\Sigma ,\mu \right) $ be a $\sigma $-finite measure space. A $%
\Sigma $-measurable function $\omega $ is called a \textit{weight} if $%
\omega $ is non-negative $\mu $-a.e. on $X$. Let $1\leq p\leq \infty $ and
let $L_{p}\left( \omega ,\mu \right) $ be the Banach space of all
(equivalence classes of) $\Sigma $-measurable functions $f$ with $f\omega
\in L_{p}\left( \mu \right) .$ Given $1\leq p_{0},p_{1}\leq \infty $ put $%
\overrightarrow{p}=\left( p_{0},p_{1}\right) .$ A Banach couple $%
\overrightarrow{U}=(U_{0},U_{1})$ of Banach lattices is called a \textit{$L_{%
\overrightarrow{p}}$-couple} if $U_{i}=L_{p_{i}}\left( \omega _{i},\mu
\right) $, $i=0,1$, for some measure space $\left( X,\Sigma ,\mu \right) $
and some weights $\omega _{0},\omega _{1}$ with respect to this measure
space.$\medskip $

Let $1\leq s_{0},s_{1}\leq \infty $ and $\overrightarrow{X},\overrightarrow{Y%
}$ be two Banach couples of Banach lattices such that $X_{i},Y_{i}$ are
relatively $s_{i}$-decomposable for $i=0,1$. Then, by Theorem \ref{Th_main},
there exist $1\leq p_{0},p_{1},q_{0},q_{1}\leq \infty $ with $%
1/p_{i}=1/q_{i}+1/s_{i}$, $i=0,1$, such that for every $L_{\overrightarrow{q}%
}$-couple $\overrightarrow{U}=(U_{0},U_{1})$ and $L_{\overrightarrow{p}}$%
-couple $\medskip \medskip \overrightarrow{V}=(V_{0},V_{1})$ both $%
X_{i},U_{i}$ and $V_{i},Y_{i}$ are relative decomposable for $i=0,1$. $%
\medskip $

Combining the last observation with the results of \cite{CwNiSc03}, we see
that each of the pairs of the couples $\overrightarrow{X},\overrightarrow{U}$
and $\overrightarrow{V},\overrightarrow{Y}$ have the relative Calderó%
n-Mityagin property ($\mathcal{C-M}$ property). Hence, the $s$%
-decomposability relation of couples of Banach lattices has some
transitivity property, which is manifested in factorization of this relation
through the canonical $s$-decomposability of suitable $L_{\overrightarrow{q}}
$- and $L_{\overrightarrow{p}}$-couples. More precisely, we get the
following result.

\begin{theorem}
\label{factor1} Let $\overrightarrow{X},\overrightarrow{Y}$ be two couples
of Banach lattices over a $\sigma $-finite measure space. If the spaces $%
X_{i},Y_{i}$ are relative $s_{i}$-decomposable for $i=0,1$, where $1\leq
s_{i}\leq \infty ,$ then there exist pairs $\overrightarrow{p}$, $%
\overrightarrow{q}$ of parameters such that, for every $L_{\overrightarrow{q}%
} $-couple $\overrightarrow{U}=(U_0,U_1)$ and every $L_{\overrightarrow{p}}$%
-couple $\overrightarrow{V}=(V_0,V_1),$ pairs of the couples $%
\overrightarrow{X},\overrightarrow{U}$ and $\overrightarrow{V},%
\overrightarrow{Y}$ have the relative $\mathcal{C-M}$ property and the
spaces $U_{i},V_{i}$ are relatively $s_{i}$-decomposable, $i=0,1$.
\end{theorem}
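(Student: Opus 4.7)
The plan is to invoke Theorem \ref{Th_main} to extract suitable parameters and then stitch together three ingredients already available in the paper: weighted $L_r$-spaces satisfy both an upper and a lower $r$-estimate (with constant one), Proposition \ref{Prop_estimates_decomp} converts matching lower/upper estimates into relative $s$-decomposability, and Cwikel's criterion (\cite{Cwi84}; see also \cite[Theorem~4.4.29]{BK91}) turns the relative decomposability of both component pairs into the relative $\mathcal{C-M}$ property of the couples.

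First I would apply Theorem \ref{Th_main} separately for each $i\in\{0,1\}$. Since $X_i,Y_i$ are relatively $s_i$-decomposable, the theorem provides exponents $p_i,q_i\in[1,\infty]$ with $1/p_i=1/q_i+1/s_i$ (so automatically $p_i\leq q_i$) such that $X_i$ admits a lower $q_i$-estimate and $Y_i$ admits an upper $p_i$-estimate. I set $\vec{p}=(p_0,p_1)$ and $\vec{q}=(q_0,q_1)$; these are the parameters for which the theorem will hold.

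Now fix any $L_{\vec{q}}$-couple $\vec{U}=(U_0,U_1)$ with $U_i=L_{q_i}(\omega_i,\mu)$ and any $L_{\vec{p}}$-couple $\vec{V}=(V_0,V_1)$ with $V_i=L_{p_i}(\omega'_i,\mu)$. A weighted $L_r$-space is lattice-isometric to an $L_r$-space over an equivalent measure, hence trivially satisfies both an upper and a lower $r$-estimate. Consequently $U_i$ has an upper $q_i$-estimate and $V_i$ has a lower $p_i$-estimate. Applying Proposition \ref{Prop_estimates_decomp} with $s=\infty$ to the pair $X_i,U_i$ (using the lower $q_i$-estimate of $X_i$ against the upper $q_i$-estimate of $U_i$) and to the pair $V_i,Y_i$ (using the lower $p_i$-estimate of $V_i$ against the upper $p_i$-estimate of $Y_i$) shows that both pairs are relatively decomposable for $i=0,1$. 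The relative $\mathcal{C-M}$ property of $\vec{X},\vec{U}$ and of $\vec{V},\vec{Y}$ then follows from Cwikel's criterion applied component-wise.

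It remains to verify that $U_i,V_i$ are relatively $s_i$-decomposable, but this is one more invocation of Proposition \ref{Prop_estimates_decomp}: $U_i$ has a lower $q_i$-estimate, $V_i$ has an upper $p_i$-estimate, and the identity $1/p_i=1/q_i+1/s_i$ is precisely the hypothesis of the proposition for $s=s_i$. There is no substantive obstacle: Theorem \ref{Th_main} has already absorbed all the abstract difficulty by translating $s$-decomposability into concrete upper/lower estimates, and the argument reduces to bookkeeping plus the elementary fact that weighted $L_r$-spaces simultaneously admit both $r$-estimates. The only point requiring minor care is the trivial case in which some $p_i$ equals $1$ or some $q_i$ equals $\infty$, handled by the standard convention and the observation that every Banach lattice satisfies an upper $1$-estimate and a lower $\infty$-estimate.
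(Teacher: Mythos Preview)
Your proposal is correct and follows essentially the same approach as the paper: the paper's proof of Theorem \ref{factor1} is the two paragraphs immediately preceding its statement, which likewise invoke Theorem \ref{Th_main} to obtain the parameters $p_i,q_i$ with $1/p_i=1/q_i+1/s_i$, observe that weighted $L_r$-spaces furnish the matching estimates needed for relative decomposability of $X_i,U_i$ and $V_i,Y_i$, and then appeal to the Cwikel/Cwikel--Nilsson--Schechtman criterion for the relative $\mathcal{C-M}$ property. Your write-up is in fact more explicit than the paper's (naming Proposition \ref{Prop_estimates_decomp} and spelling out the $s_i$-decomposability of $U_i,V_i$), but the underlying argument is identical.
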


There are many pairs of Banach couples $\overrightarrow{X}$ and $%
\overrightarrow{Y}$, which fail to have relative $\mathcal{C-M}$ property.
In \cite{Cwi84}, Cwikel introduced the following weaker condition that may
be satisfied by such a pair of Banach couples. $\medskip $

Let $\overrightarrow{X}=(X_{0},X_{1})$ and $\overrightarrow{Y}=(Y_{0},Y_{1})$
be two Banach couples. Given $1\leq s\leq \infty ,$ define the relation $%
R_{s}$ for $(x,y)\in (X_0+X_1)\times (Y_0+Y_1)$ by 
\begin{equation*}
xR_{s}y\iff \exists w\in L_{s}\left( (0,\infty ),dt/t\right) \;\mbox{with}%
\;K( t,y;\overrightarrow{Y}) \leq w(t)\cdot K( t,x;\overrightarrow{X})
,\;t>0.
\end{equation*}%
We say that the Banach couples $\overrightarrow{X},\overrightarrow{Y}$ are
of \textit{relative $\mathcal{C-M}$ type} $s$ whenever the relation $xR_{s}y$
implies that $y=Tx$ for some linear operator $T:\,\overrightarrow{X}%
\rightarrow \overrightarrow{Y}$ (i.e., $T:\,X_0+X_1\rightarrow Y_0+Y_1$, and 
$T$ is bounded from $X_i$ into $Y_i$, $i=0,1$). $\medskip $

Since each $K$-functional is a concave nondecreasing function in $t$, we can
assume that the function $w$ in this definition is continuous or constant on
each dyadic interval. From this observation it follows easily that if $%
\overrightarrow{X},\overrightarrow{Y}$ are of relative $\mathcal{C-M}$ type $%
s_{1}$ and $1\leq s_{2}\leq s_{1}$, then these couples are also of relative $%
\mathcal{C-M}$ type $s_{2}$. Furthermore, it is known \cite[Theorem 1]{Cwi76}
that arbitrary couples $\overrightarrow{X},\overrightarrow{Y}$ are of
relative $\mathcal{C-M}$ type $1$. Hence, the set of real numbers $s$ in $%
\left[ 1,\infty \right] $ such that $\overrightarrow{X},\overrightarrow{Y}$
are of relative $\mathcal{C-M}$ type $s$ is an interval which includes $1.$
In \cite{Cwi76} and \cite{Cwi84} one can find examples of Banach couples,
for which this interval is $[1,q]$, $1\leq q<\infty $, or $[1,\infty )$ (of
course, it is $\left[ 1,\infty \right] $ iff $\overrightarrow{X},%
\overrightarrow{Y}$ have the relative $\mathcal{C-M}$ property). $\medskip $

Further, in \cite{Cwi84}, Cwikel proved that, if the couples $%
\overrightarrow{X},\overrightarrow{Y}$ are mutually closed and $X_{i},Y_{i}$%
, $i=0,1$, are relatively $s$-decomposable for some $1\leq s\leq \infty $,
then these couples are of relative $\mathcal{C-M}$ type $s$ (see also \cite[%
p. 606]{BK91}). Let us show that, under some conditions, this implies the
orbital factorization of relative $K$-functional estimates for such couples
through suitable $L_{\overrightarrow{p}}$- and $L_{\overrightarrow{q}}$%
-couples. $\medskip $

Given Banach couples $\overrightarrow{X},\overrightarrow{Y}$ the couple $%
\overrightarrow{Y}$ is called \textit{$\overrightarrow{X}$-abundant}, if for
each element $x\in X_{0}+X_{1}$ there exists $y\in Y_{0}+Y_{1}$ such that 
\begin{equation*}
K( t,x;\overrightarrow{X}) \asymp K( t,y;\overrightarrow{Y})
\end{equation*}%
with constants independent of $x\in X_{0}+X_{1}$ and $t>0$ (see e.g. \cite[
Definition 4.4.8]{BK91}). For instance, if a couple $\overrightarrow{X}$ is
regular (i.e., $X_0\cap X_1$ is dense in $X_{0}$ and $X_{1}),$ then the $L_{%
\overrightarrow{p}}$-couples $\left( l_{p_{0}}\left( \mathbb{Z},\left(
1\right) _{n\in \mathbb{Z}}\right) ,l_{p_{1}}\left( \mathbb{Z},\left(
2^{-n}\right) _{n\in \mathbb{Z}}\right) \right) $ and $\left(
L_{p_{0}}\left( \mathbb{R}_{+},dt/t\right) ,L_{p_{1}}\left( \mathbb{R}%
_{+},dt/t\right) \right) $ are $\overrightarrow{X} $-abundant for each pair $%
\overrightarrow{p}=\left( p_{0},p_{1}\right) $ \cite[Theorem 4.5.7]{BK91}.
With this notation, we have the following version of Theorem \ref{factor1}.

\begin{theorem}
\label{factor3} Let $\overrightarrow{X}=(X_{0},X_{1})$ and $\overrightarrow{Y%
}=(Y_{0},Y_{1})$ be two Banach lattice couples over a $\sigma $-finite
measure space such that $X_{i},Y_{i}$, $i=0,1$, are relatively $s$%
-decomposable for some $1\leq s\leq \infty $. Then, there are pairs $%
\overrightarrow{p}=(p_{0},p_{1})$ and $\overrightarrow{q}=(q_{0},q_{1})$ of
parameters such that for every $L_{\overrightarrow{q}}$-couple $%
\overrightarrow{U}$, which is $\overrightarrow{X}$-abundant, and every $L_{%
\overrightarrow{p}}$-couple $\overrightarrow{V}$, which is $\overrightarrow{Y%
}$-abundant, we have the following: If $x\in X_{0}+X_{1},y\in Y_{0}+Y_{1}$
satisfy the relation $xR_{s}y$, then there exist linear operators $T_{0}:%
\overrightarrow{X}\rightarrow \overrightarrow{U}$, $T_{1}:\overrightarrow{U}%
\rightarrow \overrightarrow{V}$, $T_{2}:\overrightarrow{V}\rightarrow 
\overrightarrow{Y}$ such that $y=T_{2}T_{1}T_{0}x$.
\end{theorem}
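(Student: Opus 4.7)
The plan is to factor the statement \(y=T_2T_1T_0x\) through the couples $\vec{U}$ and $\vec{V}$ by applying the Calder\'on--Mityagin (CM) property to three auxiliary pairs of couples in succession, using abundance to bridge the $K$-functional equivalences at each junction.

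First, I would pick the parameters $\vec{p},\vec{q}$ as follows. By Theorem \ref{Th_main} applied to each $s$-decomposable pair $X_i,Y_i$ ($i=0,1$), there exist $p_i,q_i\in[1,\infty]$ with $1/p_i=1/q_i+1/s$ such that $X_i$ satisfies a lower $q_i$-estimate and $Y_i$ satisfies an upper $p_i$-estimate. Set $\vec{q}=(q_0,q_1)$, $\vec{p}=(p_0,p_1)$. For any $L_{\vec{q}}$-couple $\vec{U}=(U_0,U_1)$ and $L_{\vec{p}}$-couple $\vec{V}=(V_0,V_1)$, since $U_i=L_{q_i}(\omega_i,\mu)$ automatically satisfies an upper $q_i$-estimate, Proposition \ref{Prop_estimates_decomp} (in the case $s=\infty$) implies that $X_i,U_i$ are relatively decomposable. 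Similarly $V_i,Y_i$ are relatively decomposable (as $V_i$ has a lower $p_i$-estimate and $Y_i$ an upper $p_i$-estimate). Moreover, since $U_i$ has a lower $q_i$-estimate and $V_i$ has an upper $p_i$-estimate with $1/p_i=1/q_i+1/s$, Proposition \ref{Prop_estimates_decomp} yields that $U_i,V_i$ are relatively $s$-decomposable. Consequently, by \cite{CwNiSc03} the couples $\vec{X},\vec{U}$ and $\vec{V},\vec{Y}$ have the relative CM property, and by Cwikel's theorem \cite{Cwi84} the couples $\vec{U},\vec{V}$ are of relative CM type $s$ (arranging mutual closedness is routine for weighted $L_p$-couples).

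The construction of the three operators now proceeds as follows. Let $x\in X_0+X_1$ and $y\in Y_0+Y_1$ satisfy $xR_sy$, i.e.\ there exists $w\in L_s((0,\infty),dt/t)$ with $K(t,y;\vec{Y})\le w(t)K(t,x;\vec{X})$.
\begin{itemize}
\item $\vec{X}$-abundance of $\vec{U}$ gives $u\in U_0+U_1$ with $K(t,u;\vec{U})\asymp K(t,x;\vec{X})$. In particular $K(t,u;\vec{U})\le C\cdot K(t,x;\vec{X})$, so by the CM property of $\vec{X},\vec{U}$ there is $T_0:\vec{X}\to\vec{U}$ with $T_0x=u$.
\item $\vec{Y}$-abundance of $\vec{V}$ gives $v\in V_0+V_1$ with $K(t,v;\vec{V})\asymp K(t,y;\vec{Y})$. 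Chaining the three estimates yields $K(t,v;\vec{V})\le C'w(t)K(t,u;\vec{U})$, i.e.\ $uR_sv$ with $C'w\in L_s(dt/t)$. The CM type $s$ property of $\vec{U},\vec{V}$ furnishes $T_1:\vec{U}\to\vec{V}$ with $T_1u=v$.
\item Finally, $K(t,y;\vec{Y})\le C''K(t,v;\vec{V})$ together with the CM property of $\vec{V},\vec{Y}$ gives $T_2:\vec{V}\to\vec{Y}$ with $T_2v=y$.
\end{itemize}
Composition yields $y=T_2v=T_2T_1u=T_2T_1T_0x$, as required.

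The main obstacle is ensuring, at the middle step, that Cwikel's relative CM-type-$s$ theorem applies to the pair $(\vec{U},\vec{V})$: that theorem requires both couples to be mutually closed in addition to the $s$-decomposability of $U_i,V_i$. This is not a serious issue since one can choose the realizations of $L_{\vec{q}}$ and $L_{\vec{p}}$ (for instance, over the same measure space supporting both weight systems) so that mutual closedness holds, and the hypotheses of the theorem give us complete freedom to do so. Otherwise, the argument is essentially bookkeeping: keeping track of multiplicative constants in the $K$-functional estimates (all of which can be absorbed into the function $w$ without leaving $L_s(dt/t)$), and verifying that the chosen $\vec{p},\vec{q}$ are compatible across $i=0,1$ (which they are, since $X_i,Y_i$ are $s$-decomposable for the same $s$).
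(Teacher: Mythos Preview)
Your proof is correct and follows essentially the same approach as the paper's own proof. The only difference is cosmetic: the paper packages your first paragraph as a separate statement (Theorem~\ref{factor1}) and then invokes it, whereas you derive the content of Theorem~\ref{factor1} directly from Theorem~\ref{Th_main} and Proposition~\ref{Prop_estimates_decomp}; the abundance-plus-three-CM-steps argument in your bulleted list is identical to the paper's.
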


\begin{proof}
Applying first Theorem \ref{factor1}, we find parameters $1\leq
p_{i},q_{i}\leq \infty $, $1/p_{i}=1/q_{i}+1/s$, $i=0,1$, such that, if $%
\overrightarrow{p}=(p_{0},p_{1})$, $\overrightarrow{q}=(q_{0},q_{1})$, then
for every $L_{\overrightarrow{q}}$-couple $\overrightarrow{U}=(U_{0},U_{1})$
and every $L_{\overrightarrow{p}}$-couple $\overrightarrow{V}=(V_{0},V_{1}),$
pairs of the couples $\overrightarrow{X},\overrightarrow{U}$ and $%
\overrightarrow{V},\overrightarrow{Y}$ have relative $\mathcal{C-M}$
property and the spaces $U_{i},V_{i}$, $i=0,1$, are relatively $s$%
-decomposable. Next, assuming that $x\in X_{0}+X_{1},y\in Y_{0}+Y_{1}$
satisfy $xR_{s}y$, by using the abundance assumption, we can select $u\in
U_{0}+U_{1}$ and $v\in V_{0}+V_{1}$ such that 
\begin{eqnarray*}
K( t,x;\overrightarrow{X}) &\asymp &K( t,u;\overrightarrow{U}) \\
K( t,y;\overrightarrow{Y}) &\asymp &K( t,v;\overrightarrow{V})
\end{eqnarray*}%
with constants independent of $x\in X_{0}+X_{1}$, $y\in Y_{0}+Y_{1}$ and $%
t>0 $. Since the couples $\overrightarrow{X},\overrightarrow{U}$ and $%
\overrightarrow{V},\overrightarrow{Y}$ have relative $\mathcal{C-M}$
property, we can find linear operators $T_{0}:\overrightarrow{X}\rightarrow 
\overrightarrow{U}$ and $T_{2}:\overrightarrow{V}\rightarrow \overrightarrow{%
Y}$ satisfying $u=T_{0}x$ and $y=T_{2}v.$ Moreover, as was above-mentioned
(see \cite{Cwi84}), the couples $\overrightarrow{U}$ and $\overrightarrow{V}$
are of relative $\mathcal{C-M}$ type $s$. Hence, from the relation $xR_{s}y$
it follows the existence of a linear operator $T_{1}:\overrightarrow{U}%
\rightarrow \overrightarrow{V}$ such that $v=T_{1}u.$
\end{proof}

Assume now that $\overrightarrow{X}=(X_{0},X_{1})$ and $\overrightarrow{Y}%
=(Y_{0},Y_{1})$ are two Banach lattice couples such that $X_{i},Y_{i}$ are
relatively $\infty $-decomposable for $i=0,1$. Then, the results of \cite%
{CwNiSc03} imply that the couples $\overrightarrow{X}$ and $\overrightarrow{Y%
}$ have relative $\mathcal{C-M}$ property. Arguing in the same way as in the
proof of Theorem \ref{factor3}, one can easily deduce the following
factorization result.

\begin{theorem}
\label{factor2} Let $\overrightarrow{X}=(X_{0},X_{1})$ and $\overrightarrow{Y%
}=(Y_{0},Y_{1})$ be two Banach lattice couples over a $\sigma $-finite
measure space such that $X_{0},Y_{0}$ and $X_{1},Y_{1}$ are relatively
decomposable. Then, there is a pair $\overrightarrow{p}=(p_{0},p_{1})$ of
parameters such that for every $L_{\overrightarrow{p}}$ couples $%
\overrightarrow{U}$ and $\overrightarrow{V}$ such that $\overrightarrow{U}$
is $\overrightarrow{X}$-abundant and $\overrightarrow{V}$ is $%
\overrightarrow{Y}$-abundant we have the following: If $x\in
X_{0}+X_{1},y\in Y_{0}+Y_{1}$ satisfy 
\begin{equation}
K( t,y;\overrightarrow{Y}) \leq K( t,x;\overrightarrow{X}) ,\;\;t>0,
\end{equation}%
then there exist linear operators $T_{0}:\overrightarrow{X}\rightarrow 
\overrightarrow{U}$, $T_{1}:\overrightarrow{U}\rightarrow \overrightarrow{V}$%
, $T_{2}:\overrightarrow{V}\rightarrow \overrightarrow{Y}$ with $%
y=T_{2}T_{1}T_{0}x.$
\end{theorem}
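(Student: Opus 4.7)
The plan is to mirror the proof of Theorem~\ref{factor3}, exploiting the simplifications afforded by taking $s = \infty$. First, I apply Theorem~\ref{factor1} with $s_{0} = s_{1} = \infty$: since the relation $1/p_{i} = 1/q_{i} + 1/s_{i}$ collapses to $p_{i} = q_{i}$, a single pair $\overrightarrow{p} = (p_{0},p_{1})$ suffices, and for every $L_{\overrightarrow{p}}$-couple $\overrightarrow{U} = (U_{0},U_{1})$ and every $L_{\overrightarrow{p}}$-couple $\overrightarrow{V} = (V_{0},V_{1})$ the pairs $X_{i},U_{i}$ and $V_{i},Y_{i}$ are relatively decomposable for $i = 0,1$. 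In addition, since each $U_{i}$ and $V_{i}$ is a weighted $L_{p_{i}}$-space, both satisfy simultaneously an upper and a lower $p_{i}$-estimate; hence Proposition~\ref{Prop_estimates_decomp} (with $s = \infty$) also gives relative decomposability of the coordinate pairs $U_{i},V_{i}$.

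Cwikel's theorem on pairs of decomposable couples (Section~\ref{Intro} and \cite{CwNiSc03}) now delivers three relative $\mathcal{C-M}$ properties simultaneously: for $\overrightarrow{X},\overrightarrow{U}$, for $\overrightarrow{U},\overrightarrow{V}$, and for $\overrightarrow{V},\overrightarrow{Y}$.

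Given $x \in X_{0}+X_{1}$ and $y \in Y_{0}+Y_{1}$ satisfying $K(t,y;\overrightarrow{Y}) \le K(t,x;\overrightarrow{X})$, I use $\overrightarrow{X}$-abundance of $\overrightarrow{U}$ to select $u \in U_{0}+U_{1}$ with $K(t,u;\overrightarrow{U}) \asymp K(t,x;\overrightarrow{X})$, and $\overrightarrow{Y}$-abundance of $\overrightarrow{V}$ to select $v \in V_{0}+V_{1}$ with $K(t,v;\overrightarrow{V}) \asymp K(t,y;\overrightarrow{Y})$. Chaining these equivalences together with the hypothesis yields uniform $K$-estimates
\[
K(t,x;\overrightarrow{X}) \preceq K(t,u;\overrightarrow{U}), \quad K(t,v;\overrightarrow{V}) \preceq K(t,u;\overrightarrow{U}), \quad K(t,y;\overrightarrow{Y}) \preceq K(t,v;\overrightarrow{V}).
\]
The three $\mathcal{C-M}$ properties then produce linear operators $T_{0}:\overrightarrow{X} \to \overrightarrow{U}$, $T_{1}:\overrightarrow{U} \to \overrightarrow{V}$, $T_{2}:\overrightarrow{V} \to \overrightarrow{Y}$ with $u = T_{0}x$, $v = T_{1}u$, and $y = T_{2}v$, and composition gives $y = T_{2}T_{1}T_{0}x$.

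No serious obstacle is anticipated: once Theorem~\ref{factor1} is in hand, the argument is essentially organizational. The only mild subtlety is to confirm that the intermediate couples $\overrightarrow{U},\overrightarrow{V}$, which may live on different $\sigma$-finite measure spaces from the original couples, still form a relatively decomposable pair in each coordinate; this follows immediately from the weighted $L_{p_{i}}$-structure via Proposition~\ref{Prop_estimates_decomp}, as noted above.
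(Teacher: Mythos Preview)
Your proposal is correct and follows essentially the same route the paper indicates: specialize Theorem~\ref{factor1} to $s_0=s_1=\infty$ so that $\overrightarrow{p}=\overrightarrow{q}$, use abundance to pick $u,v$ with $K$-functionals equivalent to those of $x,y$, and then invoke the relative $\mathcal{C}$--$\mathcal{M}$ property on each of the three links to produce $T_0,T_1,T_2$. One cosmetic slip: the first displayed estimate should read $K(t,u;\overrightarrow{U})\preceq K(t,x;\overrightarrow{X})$ (the direction needed for $T_0$), but this is immediate from the two-sided equivalence $\asymp$ you already recorded.
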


\vskip0.5cm

\section{\label{R.invariance} The proof of Theorem \protect\ref%
{Th_XL_XU_Prop}.}

This proof will be broken down into a number of lemmas and propositions. The
main step is Proposition \ref{Prop_main_Prop_xl_xu} showing under which
conditions a scale of norms on $\mathbb{R}^{n}$, $n\in\mathbb{N}$, generates
a rearrangement invariant Banach sequence lattice. The rest of this section
is to secure that these conditions are valid for both $X_{L}$- and $X_{U}$%
-constructions. $\medskip $

Recall that a functional $\Psi $ (in particular, a norm $\left\Vert \cdot
\right\Vert $) defined on $\mathbb{R}^{n}$ is called \textit{lattice monotone%
} or \textit{lattice norm} if for any elements $a=\left\{ a_{i}\right\}
_{i=1}^{n},b=\left\{ b_{i}\right\} _{i=1}^{n}\in \mathbb{R}^{n}$ such that $%
\left\vert a_{i}\right\vert \leq \left\vert b_{i}\right\vert ,1\leq i\leq n$%
, it holds $\Psi \left( a\right) \leq \Psi \left( b\right) .$ This
functional is said to be \textit{symmetric }if for any permutation $\sigma $
of the set $\left\{ 1,\dots ,n\right\} $ we have $\Psi \left( \sigma
a\right) =\Psi \left( a\right) $ where $\sigma a=\left\{ a_{\sigma \left(
i\right) }\right\} _{i=1}^{n}.$ We introduce also the operators%
\begin{eqnarray*}
I_{n} &:&\mathbb{R}^{n}\rightarrow \mathbb{R}^{n-1},\left\{ a_{i}\right\}
_{i=1}^{n}\mapsto \left\{ a_{i}\right\} _{i=1}^{n-1} \\
Tr_{n} &:&\mathbb{R}^{n}\rightarrow \mathbb{R}^{n},\left\{ a_{i}\right\}
_{i=1}^{n}\mapsto \left\{ 
\begin{array}{c}
a_{i}:i\neq n \\ 
0:i=n%
\end{array}%
\right\}
\end{eqnarray*}

As above, for any sequence $a=\left\{ a_{i}\right\} _{i=1}^{\infty }$ of
real numbers and each integer $k$, by $a^{\left( k\right) }$ we will denote
the truncated sequence $a^{\left( k\right) }$ defined by $a^{\left( k\right)
}=\{a_{i}^{(k)}\}_{i=1}^{\infty }$, with $a_{i}^{(k)}=a_{i}$ if $1\leq i\leq
k$ and $a_{i}^{(k)}=0$ if $i>k$.

\begin{proposition}
\label{Prop_main_Prop_xl_xu} Let $\left\Vert \cdot \right\Vert _{n}$ be
symmetric lattice norms on $\mathbb{R}^{n}$, $n\in \mathbb{N}$. Assume that
the restrictions $I_{n}$ are contractive with respect to these norms. Denote
by $Y$ the space of all sequences $a=\left\{ a_{i}\right\} _{i=1}^{\infty }$%
, for which the norm 
\begin{equation*}
\left\Vert a\right\Vert _{Y}:=\sup_{n\geq 1}\left\Vert \left\{ a_{i}\right\}
_{i=1}^{n}\right\Vert _{n}
\end{equation*}%
is finite. If the space $Y$ is embedded into $c_{0},$ then $Y$ is a r.i.
Banach sequence lattice.
\end{proposition}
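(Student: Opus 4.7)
The plan is to verify in turn that $Y$ is complete, that $\|\cdot\|_Y$ is a lattice norm, and that $\|\cdot\|_Y$ is rearrangement invariant. Completeness and the lattice property are essentially formal. If $(a^{(k)})_{k\geq 1}$ is a Cauchy sequence in $Y$, then because $Y \hookrightarrow c_0$ it is coordinate-wise Cauchy and converges coordinate-wise to some sequence $a$; passing to the Cauchy condition inside each finite-dimensional $(\mathbb{R}^n, \|\cdot\|_n)$ and then taking the supremum over $n$ shows $a \in Y$ and $\|a - a^{(k)}\|_Y \to 0$. The lattice norm property of $\|\cdot\|_Y$ is immediate from that of each $\|\cdot\|_n$.

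The heart of the argument is rearrangement invariance. The key preliminary is that contractivity of $I_n$ is equivalent to the padding inequality $\|(c_1,\dots,c_{n-1})\|_{n-1} \leq \|(c_1,\dots,c_{n-1},0)\|_n$, so the sequence $n \mapsto \|(a_1,\dots,a_n)\|_n$ is non-decreasing and, iterating, $\|(c_1,\dots,c_n)\|_n \leq \|(c_1,\dots,c_n,0,\dots,0)\|_N$ for all $N \geq n$. Given an arbitrary bijection $\sigma: \mathbb{N} \to \mathbb{N}$, set $b := a \circ \sigma$, fix $n$, and put $N := \max(\sigma(1),\dots,\sigma(n))$. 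Define $c \in \mathbb{R}^N$ by $c_{\sigma(k)} := a_{\sigma(k)}$ for $k = 1,\dots,n$ and $c_i := 0$ otherwise; then $(b_1,\dots,b_n,0,\dots,0) \in \mathbb{R}^N$ is a permutation of $c$, and chaining the padding inequality, the symmetry of $\|\cdot\|_N$, and the pointwise bound $|c_i| \leq |a_i|$ yields
\[
\|(b_1,\dots,b_n)\|_n \leq \|(b_1,\dots,b_n,0,\dots,0)\|_N = \|c\|_N \leq \|(a_1,\dots,a_N)\|_N \leq \|a\|_Y.
\]
Taking the supremum in $n$ and reapplying this with $\sigma^{-1}$ in place of $\sigma$ yields $\|a \circ \sigma\|_Y = \|a\|_Y$.

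Full rearrangement invariance then follows from permutation invariance combined with the lattice property: if $n_b(\tau) \leq n_a(\tau)$ for all $\tau > 0$, then (since $a,b \in c_0$) the non-increasing rearrangements satisfy $|b|^*(i) \leq |a|^*(i)$ for all $i$, and one may realize $|a| \mapsto |a|^*$ and $|b| \mapsto |b|^*$ by bijections of $\mathbb{N}$, whence $\|b\|_Y = \||b|^*\|_Y \leq \||a|^*\|_Y = \|a\|_Y$.

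The main obstacle is precisely the chain displayed above in the permutation invariance step. Symmetry of $\|\cdot\|_n$ alone handles only permutations inside $\{1,\dots,n\}$, whereas a general bijection of $\mathbb{N}$ may scatter the indices $\sigma(1),\dots,\sigma(n)$ over a much larger range. The contractive restrictions are precisely the device that lets one enlarge the ambient dimension from $n$ to $N$ at no cost, after which the symmetry and the lattice monotonicity can both be brought to bear.
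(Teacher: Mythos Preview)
Your argument is essentially correct and uses the same core device as the paper: enlarge the ambient dimension via the padding inequality, then invoke the symmetry and lattice monotonicity of $\|\cdot\|_N$. The organization differs slightly---the paper compares two equi-measurable sequences directly, without passing through $|a|^*$---but the ideas coincide.

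There is, however, a genuine (if minor) gap in your final paragraph. The claim that ``one may realize $|a|\mapsto |a|^*$ by a bijection of $\mathbb{N}$'' is false in general. If $a\in c_0$ has infinitely many non-zero coordinates but at least one zero coordinate (for instance $a=(1,0,\tfrac12,0,\tfrac13,0,\dots)$), then $|a|^*$ has full support while $|a|$ does not, so no bijection $\sigma$ of $\mathbb{N}$ can satisfy $|a_{\sigma(i)}|=|a|^*(i)$ for all $i$. Your permutation-invariance result therefore does not directly yield $\|a\|_Y=\||a|^*\|_Y$.

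The repair is easy and stays within your framework. One option is to follow the paper: for each $n$, choose $N$ large enough that the $n$ largest values of $|a|$ all occur among $|a_1|,\dots,|a_N|$, then use a permutation of $\{1,\dots,N\}$ together with padding and the lattice property to obtain $\|(|a|^*(1),\dots,|a|^*(n))\|_n\le\|(a_1,\dots,a_N)\|_N\le\|a\|_Y$. Alternatively, apply your bijection argument to the finitely supported truncations $a^{(N)}=(a_1,\dots,a_N,0,0,\dots)$, note that $(a^{(N)})^*\le|a|^*$ pointwise, and let $N\to\infty$ using $\|a^{(N)}\|_Y\uparrow\|a\|_Y$.
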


\begin{proof}
First, one can easily check that the conditions $a=\left\{
a_{i}\right\}_{i=1}^{\infty }\in Y$ and $\left\vert a_{i}\right\vert \leq
\left\vert b_{i}\right\vert$, $i=1,2,\dots$, imply that $b=\left\{
b_{i}\right\}_{i=1}^{\infty }\in Y$ and $\|b\|_Y\le \|a\|_Y$. Consequently, $%
a\mapsto\left\Vert a\right\Vert _{Y}$ is a lattice norm on $Y$.

To prove the rearrangement invariance of $Y,$ assume that $a=\left\{
a_{i}\right\}_{i=1}^{\infty }\in Y$ and a sequence $b=\left\{
b_{i}\right\}_{i=1}^{\infty }$ is equi-measurable with $a.$ This means that
the sets $\left\{ i:\left\vert a_{i}\right\vert >t\right\} $ and $\left\{
i:\left\vert b_{i}\right\vert >t\right\} $ have the same cardinality for
every $t>0$. Since $Y$ is embedded into $c_{0}$ these sets are finite and
hence the sets $A_{t}:=\left\{ i:\left\vert a_{i}\right\vert=t\right\} $ and 
$B_{t}:=\left\{ i:\left\vert b_{i}\right\vert =t\right\} $ also have the
same cardinality for each $t>0.$ Put $t_{k}:=\left\vert b_{k}\right\vert $, $%
B_{k}:=B_{t_{k}}$, $A_{k}:=A_{t_{k}}$, $k\in\mathbb{N}$.

Let $n\in \mathbb{N}$ be arbitrary. Take $u_{n}\in \mathbb{N}$ such that $%
\cup _{k=1}^{n}A_{k}\subseteq \left\{ 1,2,\dots ,u_{n}\right\} .$ Then, we
have $\left\{ \left\vert b_{k}\right\vert \right\} _{k=1}^{n}\subseteq
\left\{ \left\vert a_{k}\right\vert \right\} _{k=1}^{u_{n}}$. Indeed, if $%
1\leq k\leq n,$ then by construction $\left\vert b_{k}\right\vert =t_{k}\in
B_{k}$ and hence there exists $j\in A_{k}$ with $\left\vert a_{j}\right\vert
=\left\vert b_{k}\right\vert .$ Since $A_{k}\subseteq \left\{ 1,2,\dots
,u_{n}\right\} $, the conclusion follows.

Next, there is a permutation $\sigma $ of the set $\left\{1,2,\dots,u_{n}%
\right\} $ with $\left( \sigma \left\vert a\right\vert \right)
_{k}=\left\vert b_{k}\right\vert$, $1\leq k\leq n.$ By the assumptions of
the lemma, this implies the estimate 
\begin{equation*}
\left\Vert \left\{ b_{k}\right\} _{k=1}^{n}\right\Vert _{n}=\left\Vert
\left\{\left\vert b_{k}\right\vert \right\} _{k=1}^{n}\right\Vert
_{n}=\left\Vert \left\{ \sigma \left\vert a\right\vert \right\}
_{k=1}^{n}\right\Vert _{n}\le \left\Vert\left\{ \sigma \left\vert
a\right\vert \right\} _{k=1}^{u_n}\right\Vert_{u_n}.
\end{equation*}%
Hence, 
\begin{equation*}
\left\Vert \left\{ b_{k}\right\} _{k=1}^{n}\right\Vert _{n}\leq \left\Vert
a\right\Vert _{Y},\;\;n\in\mathbb{N},
\end{equation*}%
and so $b\in Y$ and $\left\Vert b\right\Vert _{Y}\leq \left\Vert
a\right\Vert _{Y}.$ Similarly, $\left\Vert a\right\Vert _{Y}\leq \left\Vert
b\right\Vert _{Y},$ and thus $\left\Vert a\right\Vert _{Y}=\left\Vert
b\right\Vert _{Y}.$

By construction, $Y$ is a normed linear space of sequences. To prove
completeness of $Y$, take $\left\{ a^{n}\right\} _{n=1}^{\infty }\subseteq Y$%
, $a^{n}=\left\{ a_{i}^{n}\right\} _{i=1}^{\infty }$, with $%
\sum_{n=1}^{\infty }\left\Vert a^{n}\right\Vert _{Y}=C<\infty.$ Since $Y$ is
embedded in $c_{0}$, there exists $a\in c_{0}$ with $a=\sum_{n=1}^{\infty
}a^{n}.$ Also, for each integer $k$ we have 
\begin{equation*}
\sum_{n=1}^{\infty }\Vert \{a_{i}^{n}\}_{i=1}^{k}\Vert _{k}\leq C.
\end{equation*}%
Hence, by completeness of the space $\mathbb{R}^{k}$ with respect to the
norm $\left\Vert \cdot \right\Vert _{k}$ and uniqueness of a representation
of vectors by using the canonical unit basis, we get $a^{\left( k\right)
}=\sum_{n=1}^{\infty }(a^{n})^{(k)}$ and $\left\Vert a^{\left( k\right)
}\right\Vert _{k}\leq C$ for all $k\in \mathbb{N}$. Consequently, $a\in Y$
and $\left\Vert a\right\Vert _{Y}\leq C.$ The proposition is proved.
\end{proof}

Next, we proceed with the postponed proof of Lemma \ref{Lemma_bn} on the
nonemptiness of the sets $\mathfrak{B}_{n}\left( X\right) $, $n\in\mathbb{N}$%
. We will use the notation $X_{+}$ for the positive cone $\left\{ x\in
X:\,x\geq 0\right\} $ of a Banach lattice $X.$

\begin{proof}[Proof of Lemma \protect\ref{Lemma_bn}]
If $l_{\infty }$ is finite lattice representable in $X$, then the desired
result follows immediately from Definition \ref{Def_ap}. Therefore, we can
assume that $l_{\infty }$ fails to be finite lattice representable in $X$,
and hence, by Proposition \ref{Prop_not_A-infinity}, $X$ is both $\sigma $%
-complete and $\sigma $-order continuous. This implies that for each $x\in
X_{+}$ we can define the contractive projection $P_{x}:X\rightarrow X$ by $%
P_{x}\left( y\right) =\vee _{n\geq 1}\left( nx\wedge y\right) ,y\in X_{+},$
and then extend it by linearity to the whole of $X$ (see e.g. \cite{LT79}).

Suppose that $\left\{ x_{i}\right\} _{i=1}^{m}$, where $m\in \mathbb{N}$, is
a maximal sequence of normalized positive pair-wise disjoint elements in a
Banach lattice $X$. We claim that $X$ has dimension not bigger than $m.$ We
will divide the proof of this fact into several parts.

$\left( i\right) $ Each element $x\in \left\{ x_{i}\right\} _{i=1}^{m}$ is
an atom.

Assume that $x=y+z$ for some $y,z$ with $\left\vert y\right\vert \wedge
\left\vert z\right\vert =0.$ Since $x>0$, we have $x=\left\vert y\right\vert
+\left\vert z\right\vert $, and thus $0\leq \left\vert y\right\vert
,\left\vert z\right\vert \leq x.$ Hence, by maximality, $\left\vert
y\right\vert =x$ or $\left\vert z\right\vert =x$, i.e., $x$ is an atom.

$\left( ii\right)$ For every $x\in \left\{ x_{i}\right\} _{i=1}^{m}$ the
projection $P_{x}$ has one dimensional range.

$\ $Recall that (see \cite[p. 10]{LT79}) 
\begin{equation}
\mathrm{Im}P_{x}=\left\{ z\in X:\, x\wedge y=0\; \mbox{for some}\;y\in X_+\;
\Longrightarrow \left\vert z\right\vert \wedge y=0\right\}.  \label{Im_Px}
\end{equation}

Putting $z=P_{x}\left( y\right) $, where $y\in Y_{+}$, we have $z\ge 0$.
Without loss of generality, assume that $z>0.$ From \eqref{Im_Px} it follows
that $z\wedge x_{i}=0$ whenever $x_{i}\neq x.$ If $z\wedge x=0$ we get a
contradiction, because the set $\left\{ x_{i}\right\}_{i=1}^{m}$ was
selected to be maximal. Hence, $0<z\wedge x\leq x$ and, since $x$ is an
atom, we conclude that $z\wedge x=\lambda x$ for some $\lambda>0.$ Observe
that the set $(\left\{ x_{i}\right\} _{i=1}^{m}\smallsetminus \left\{
x\right\})\cup \left\{ z/\|z\|_X\right\}$ is also a maximal set of
normalized positive pair-wise disjoint elements in $X$. Consequently, from $%
\left(i\right) $ it follows that $z$ is an atom. Since $\lambda x=x\wedge
z\leq z$, this implies that $\lambda x=\mu z$ for some scalar $\mu>0.$
Hence, $P_{x}$ has one dimensional range, generated by the vector $x.$

$\left( iii\right)$ $X$ is the linear span of the sequence $\left\{
x_{i}\right\} _{i=1}^{m}.$

Put $x=\vee _{i=1}^{m}x_{i}$ and take $y\in X_{+}.$ Then, if $%
z:=P_{x}\left(y\right) $, we have $x\wedge \left( y-z\right) =0.$ From the
inequalities $0\leq x_{i}\leq x$ and $0\leq z\leq y$ it follows that $%
x_{i}\wedge \left( y-z\right) =0$ and hence, by maximality, we have $%
y=z=P_{x}\left( y\right) .$ Since $x\wedge y=\vee _{i=1}^{m}\left(
x_{i}\wedge y\right) $, we have for each integer $n$%
\begin{equation*}
nx\wedge y=\vee _{i=1}^{m}\left( nx_{i}\wedge y\right)
=\sum_{i=1}^{m}nx_{i}\wedge y\leq \sum_{i=1}^{m}P_{x_{i}}\left( y\right),
\end{equation*}%
which implies that 
\begin{equation*}
y=P_{x}\left( y\right) \leq \sum_{i=1}^{m}P_{x_{i}}\left( y\right).
\end{equation*}%
By the decomposition property, we may write $y=\sum_{i=1}^{m}y_{i}$, where $%
0\leq y_{i}\leq P_{x_{i}}\left( y\right) $, and hence $y_{i}\in \mathrm{Im}%
P_{x_{i}}.$ Therefore, by $\left( ii\right)$, $y_{i}=\lambda _{i}x_{i}$ for
some scalars $\lambda _{i}$ and thus $y=\sum_{i=1}^{m}\lambda _{i}x_{i}.$ As
a result, the claim is proven and so the lemma follows.
\end{proof}

\begin{lemma}
\label{Lemma_bn_extension} Let $X$ be an infinite dimensional Banach lattice
such that $l_{\infty }$ is not finitely lattice representable in $X$. Then,
for every sequence $\left\{ x_{i}\right\} _{i=1}^{n}\in \mathfrak{B}%
_{n}\left( X\right) $ and $\varepsilon >0$ there exists a sequence $\left\{
u_{i}\right\} _{i=1}^{n+1}\in \mathfrak{B}_{n+1}\left( X\right) $ such that
either

$\left( i\right):$ $u_{i}=x_{i}$, $i=1,\dots,n$,

or

$\left( ii\right):$ there exists $k$ with $1\leq k\leq n$ and a bijection $%
\psi :\left\{ 1,..,n-1\right\} \rightarrow \left\{ 1,...,n\right\}
\smallsetminus \left\{ k\right\} $ such that $u_{i}=x_{\psi \left( i\right)
},1\leq i\leq n-1$ and $\alpha u_{n}+\beta u_{n+1}=x_{k}$ for some positive
scalars $\alpha$ and $\beta.$ Moreover, we have 
\begin{equation*}
\left\Vert u_{n}-x_{k}\right\Vert _{X}\leq \varepsilon.
\end{equation*}
\end{lemma}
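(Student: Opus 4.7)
The plan is to split into two cases depending on whether the band generated by $x := \sum_{i=1}^n x_i$ is all of $X$. Replacing each $x_i$ by $|x_i|$, we may assume $x_i \ge 0$. I will use throughout that, since $l_\infty$ is not finitely lattice representable in $X$, Proposition \ref{Prop_not_A-infinity} ensures $X$ is $\sigma$-order complete with $\sigma$-order continuous norm, so band projections $P_y$ are well-defined for every $y \in X_+$, and Proposition \ref{Prop_dia} gives that $X$ satisfies a lower $p$-estimate for some $1 \le p < \infty$ with some constant $C$.

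If the disjoint complement $\{x\}^d$ is nonzero, pick any positive unit vector $v \in \{x\}^d$; then $v$ is disjoint from every $x_i$, and setting $u_i := x_i$ for $1 \le i \le n$ and $u_{n+1} := v$ yields case $(i)$. Otherwise $\{x\}^d = \{0\}$, i.e.\ the band generated by $x$ is all of $X$; since the bands $B_k := \{x_k\}^{dd}$ are pairwise disjoint, this forces $X = B_1 \oplus \cdots \oplus B_n$. Because $X$ is infinite dimensional, at least one $B_k$ must be infinite dimensional. Fix such a $k$ and, for an integer $m$ to be chosen later, apply Lemma \ref{Lemma_bn} inside $B_k$ to obtain a pairwise disjoint sequence $y_1, \dots, y_m$ of positive unit vectors in $B_k$. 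Set $e_j := P_{y_j}(x_k)$. Each $e_j$ is a component of $x_k$, nonzero (if $e_j = 0$ then $y_j$ is disjoint from $x_k$, whence $y_j \in B_k \cap B_k^d = \{0\}$), and the $e_j$ are pairwise disjoint because they lie in the pairwise disjoint bands generated by the $y_j$'s. Applying the lower $p$-estimate to the disjoint sum $\sum_{j=1}^m e_j \le x_k$ yields
\[
\Big(\sum_{j=1}^m \|e_j\|_X^p\Big)^{1/p} \le C\Big\|\sum_{j=1}^m e_j\Big\|_X \le C \|x_k\|_X = C,
\]
so $\min_j \|e_j\|_X \le C m^{-1/p}$, which is less than $\varepsilon/2$ once $m$ is large enough. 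Fix such $m$ and $j$, set $b := e_j$, $a := x_k - b$, and define $u_n := a/\|a\|_X$, $u_{n+1} := b/\|b\|_X$, $\alpha := \|a\|_X$, $\beta := \|b\|_X$; reindex the other vectors via any bijection $\psi \colon \{1,\dots,n-1\} \to \{1,\dots,n\}\setminus\{k\}$ and let $u_i := x_{\psi(i)}$ for $1 \le i \le n-1$. This produces case $(ii)$; the required bound $\|u_n - x_k\|_X \le (1-\|a\|_X)+\|b\|_X \le 2\|b\|_X < \varepsilon$ follows from $\|u_n - a\|_X = 1 - \|a\|_X$ together with the reverse triangle inequality $\|a\|_X \ge 1 - \|b\|_X$.

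The main obstacle will be passing from an arbitrary long disjoint sequence in $B_k$ to small \emph{components} of $x_k$ itself; the key trick is that the band projections $P_{y_j}$ automatically turn the $y_j$'s into disjoint components of $x_k$, making the lower $p$-estimate directly applicable and forcing one of them to be small.
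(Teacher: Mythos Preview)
Your proof is correct and follows essentially the same route as the paper's: the same case split on whether $\{x\}^d$ is trivial, the same use of a lower $p$-estimate to locate a small component of some $x_k$, and the same normalization to produce $u_n,u_{n+1}$. The only minor variation is that you obtain the disjoint components of $x_k$ by invoking Lemma~\ref{Lemma_bn} inside $B_k$ and applying the band projections $P_{y_j}$, whereas the paper iterates a direct non-atom splitting of $x_k$; your final bound $\|u_n-x_k\|\le 2\|b\|$ is also a bit cleaner than the paper's estimate.
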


\begin{proof}
Given $\left\{ x_{i}\right\} _{i=1}^{n}\in \mathfrak{B}_{n}\left( X\right) $
we put $x=\vee _{i=1}^{n}x_{i}=\sum_{i=1}^{n}x_{i}.$ Take $y\in X_{+}$ and
set $z=P_{x}\left( y\right) .$ Then $x\wedge \left(y-z\right) =0.$ If there
exists $y$ such that $y\neq P_{x}\left( y\right) $, we define $%
u_{n+1}:=\lambda \left( y-P_{x}\left( y\right) \right) $, where $\lambda $
is selected so that $\left\Vert u_{n+1}\right\Vert =1.$ Then, setting $%
u_i=x_i$, $i=1,\dots,n$, we see that the case $\left(i\right) $ holds.

Therefore, we may assume that $P_{x}\left( y\right) =y$ for each $y\in
X_{+}. $ Hence, as in the proof of Lemma \ref{Lemma_bn}, it follows that $X$
is the direct sum of the bands $P_{x_{i}}\left( X\right) $, $i=1,2,\dots ,n$%
, and hence at least one of them, say, $P_{x_{k}}\left( X\right) $, is
infinite dimensional. Since $x_{k}$ can not be an atom, we may write $%
x_{k}=u+v$, where $u,v\in X_{+}$ and $u\wedge v=0.$ Further, at least one of
the subspaces $P_{u}\left( X\right) $ or $P_{v}\left( X\right) $ is again
infinite dimensional. Arguing in the same way, we conclude that, for any
positive integer $m,$ $x_{k}$ is a sum of $m$ pair-wise disjoint elements $%
w_{j}$, $j=1,2,\dots ,m$. Without loss of generality, assume that $\Vert
w_{1}\Vert _{X}\geq \Vert w_{2}\Vert _{X}\geq \dots \geq \Vert w_{m}\Vert
_{X}$. By the assumption (see also Proposition \ref{Prop_dia}), $X $
satisfies a lower $p$-estimate for some $p<\infty .$ Consequently, we can
estimate 
\begin{equation*}
m^{1/p}\left\Vert w_{m}\right\Vert _{X}\leq \left( \sum_{i=1}^{m}\left\Vert
w_{j}\right\Vert _{X}^{p}\right) ^{1/p}\leq M_{\left[ p\right] }\left(
x\right) \left\Vert x_{k}\right\Vert _{X}=M_{\left[ p\right] }\left(
X\right) ,
\end{equation*}%
whence $\lim_{m\rightarrow \infty }\left\Vert w_{m}\right\Vert _{X}=0.$

Put 
\begin{equation*}
u_{n}=\left( x_{k}-w_{m}^{\left( m\right) }\right) /\left\Vert
x-w_{m}^{\left( m\right) }\right\Vert _{X},\;\;u_{n+1}=w_{m}^{\left(
m\right) }/\left\Vert w_{m}^{\left( m\right)}\right\Vert _{X}.
\end{equation*}
Since $u_{n}\wedge u_{n+1}=0$, we have $\left\{u_{i}\right\} _{i=1}^{n+1}\in 
\mathfrak{B}_{n+1}\left( X\right) $. Moreover, by construction, $%
x_{k}=\alpha u_{n}+\beta u_{n+1}$, with $\alpha =\Vert x_{k}-w_{m}^{\left(
m\right) }\Vert _{X}$, $\beta =\Vert w_{m}^{\left( m\right) }\Vert _{X}.$
Finally, since 
\begin{eqnarray*}
\left\Vert x_{k}-u_{n}\right\Vert _{X}&=&\left\Vert \frac{\left( \alpha
-1\right) x_{k}+w_{m}^{\left( m\right) }}{\alpha }\right\Vert _{X} \\
&\leq& \frac{\vert\Vert x_{k}-w_{m}^{\left( m\right) }\Vert _{X}-\Vert
x_{k}\Vert _{X}\vert }{\alpha }\left\Vert x_{k}\right\Vert _{X}+\frac{\Vert
w_{m}^{\left( m\right) }\Vert _{X}}{\alpha } \\
&\leq& \frac{2}{\alpha }\left\Vert w_{m}^{\left( m\right) }\right\Vert _{X},
\end{eqnarray*}%
we may select $m$ so that $\left\Vert x_{k}-u_{n}\right\Vert
_{X}<\varepsilon .$ Thus, all the conditions in $\left( ii\right) $ are
fulfilled.
\end{proof}

\begin{remark}
In the above proof we required that $l_{\infty }$ fails to be finitely
lattice representable in $X$. But, in fact, we need only a weaker property
that if $\left\{ x_{n}\right\} _{n=1}^{\infty }$ is an infinite sequence of
pair-wise disjoint elements with decreasing norms in $X$, then $\left\Vert
x_{n}\right\Vert _{X}\downarrow 0.$ According the terminology from the book 
\cite{AlBu03}, such a Banach lattice $X$ is said to have the Lebesgue
property (see \cite[Theorem 3.22]{AlBu03}).
\end{remark}

\begin{lemma}
\label{Lemma_xln_propeties} Each of the functionals $\left\Vert \cdot
\right\Vert _{X_{U}\left( n\right) },\Phi _{n}\left( \cdot \right) $ and $%
\left\Vert \cdot \right\Vert _{X_{L}\left( n\right) }$ defined on $\mathbb{R}%
^{n}$ is lattice monotone and symmetric.
\end{lemma}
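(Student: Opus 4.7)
The plan is to verify symmetry and lattice monotonicity first for $\|\cdot\|_{X_U(n)}$ and $\Phi_n$, and then derive the corresponding properties for $\|\cdot\|_{X_L(n)}$ from those of $\Phi_n$. The basic observation for the first step is that if $\{x_i\}_{i=1}^n\in\mathfrak{B}_n(X)$, then by the lattice property of $X$ and the pairwise disjointness of the $x_i$'s, we have
\[
\Bigl|\sum_{i=1}^n a_i x_i\Bigr|=\sum_{i=1}^n |a_i|\,|x_i|,
\]
so
\[
\Bigl\|\sum_{i=1}^n a_i x_i\Bigr\|_X=\Bigl\|\sum_{i=1}^n |a_i|\,|x_i|\Bigr\|_X,
\]
and $\{|x_i|\}_{i=1}^n\in\mathfrak{B}_n(X)$ as well. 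From this it follows that if $|a_i|\le |b_i|$ for $i=1,\dots ,n$, then for every $\{x_i\}_{i=1}^n\in\mathfrak{B}_n(X)$ with non-negative terms we have $|a_i x_i|\le |b_i x_i|$ in $X$, and hence $\|\sum a_i x_i\|_X\le \|\sum b_i x_i\|_X$ by the monotonicity of the lattice norm on $X$. Taking the supremum (resp.\ infimum) over $\mathfrak{B}_n(X)$ yields lattice monotonicity of $\|\cdot\|_{X_U(n)}$ and $\Phi_n$.

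For symmetry, given a permutation $\sigma$ of $\{1,\dots,n\}$ and a sequence $\{x_i\}_{i=1}^n\in\mathfrak{B}_n(X)$, the reindexed sequence $\{x_{\sigma^{-1}(i)}\}_{i=1}^n$ belongs to $\mathfrak{B}_n(X)$ as well, and
\[
\sum_{i=1}^n a_{\sigma(i)} x_i=\sum_{j=1}^n a_j x_{\sigma^{-1}(j)}.
\]
Thus the families of admissible norms $\{\|\sum a_i x_i\|_X:\{x_i\}\in\mathfrak{B}_n(X)\}$ and $\{\|\sum (\sigma a)_i x_i\|_X:\{x_i\}\in\mathfrak{B}_n(X)\}$ coincide, so $\|\sigma a\|_{X_U(n)}=\|a\|_{X_U(n)}$ and $\Phi_n(\sigma a)=\Phi_n(a)$.

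For $\|\cdot\|_{X_L(n)}$, symmetry is immediate from the symmetry of $\Phi_n$: given any representation $a=\sum_{k\in F}a^k$, the representation $\sigma a=\sum_{k\in F}\sigma a^k$ satisfies $\sum_{k\in F}\Phi_n(\sigma a^k)=\sum_{k\in F}\Phi_n(a^k)$. For lattice monotonicity, suppose $|a_i|\le |b_i|$ for $i=1,\dots ,n$. Define scalars $c_i:=a_i/b_i$ when $b_i\ne 0$ and $c_i:=0$ otherwise; then $|c_i|\le 1$ and $a_i=c_i b_i$ coordinate-wise. If $b=\sum_{k\in F}b^k$ is any finite decomposition, then $a=\sum_{k\in F}(c\cdot b^k)$, where $c\cdot b^k$ denotes the coordinate-wise product. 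Because $|(c\cdot b^k)_i|\le |b^k_i|$ for each $i$, lattice monotonicity of $\Phi_n$ (already established) gives $\Phi_n(c\cdot b^k)\le \Phi_n(b^k)$. Summing over $k\in F$ and taking the infimum over all decompositions of $b$ yields $\|a\|_{X_L(n)}\le \|b\|_{X_L(n)}$.

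I do not foresee a significant obstacle in this proof; the only subtlety is being careful with signs, which is handled once at the start by passing from $x_i$ to $|x_i|$ and observing the identity $|\sum a_i x_i|=\sum |a_i|\,|x_i|$ for disjoint $x_i$. Everything else is a routine transfer of symmetry and monotonicity from the inner norm expression to the sup, inf, and the decomposition-based infimum defining $\|\cdot\|_{X_L(n)}$.
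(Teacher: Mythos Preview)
Your proof is correct and follows essentially the same approach as the paper's own proof: the disjointness identity $|\sum a_i x_i|=\sum |a_i|\,|x_i|$ for monotonicity of $\|\cdot\|_{X_U(n)}$ and $\Phi_n$, reindexing for symmetry, and then passing these properties through the decomposition-infimum to $\|\cdot\|_{X_L(n)}$. The only cosmetic difference is that for the lattice monotonicity of $\|\cdot\|_{X_L(n)}$ you give an explicit construction via the multiplier $c_i=a_i/b_i$, whereas the paper simply asserts that one can ``readily select'' a dominated decomposition; your version is arguably cleaner.
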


\begin{proof}
Let $a=\left\{ a_{i}\right\} _{i=1}^{n}$ and $b=\left\{ b_{i}\right\}
_{i=1}^{n}$ be two sequences of scalars with $\left\vert b_{i}\right\vert
\leq \left\vert a_{i}\right\vert$, $1\leq i\leq n.$ Then, for every sequence 
$\left\{ x_{i}\right\} _{i=1}^{n}\in \mathfrak{B}_{n}\left( X\right) $ we
have 
\begin{equation}  \label{elem}
\left\vert \sum_{i=1}^{n}b_{i}x_{i}\right\vert =\sum_{i=1}^{n}\left\vert
b_{i}\right\vert \left\vert x_{i}\right\vert\leq \sum_{i=1}^{n}\left\vert
a_{i}\right\vert \left\vert x_{i}\right\vert =\left\vert
\sum_{i=1}^{n}a_{i}x_{i}\right\vert.
\end{equation}
Also, let $\sigma $ be a permutation of the set $\left\{ 1,\dots,n\right\} $
and the sequence $c$ be defined by $c=\sigma a:=\left\{ a_{\sigma \left(
i\right) }\right\} _{i=1}^{n}.$ Further, we prove the desired claims for
each functional separately.

$\left( i\right) :\left\Vert \cdot \right\Vert _{X_{U}\left( n\right) }.$
From \eqref{elem} it follows that 
\begin{equation*}
\left\Vert \sum_{i=1}^{n}b_{i}x_{i}\right\Vert _{X}\leq \left\Vert
\sum_{i=1}^{n}a_{i}x_{i}\right\Vert _{X},
\end{equation*}
which implies $\left\Vert b\right\Vert _{X_{U}\left( n\right) }\leq
\left\Vert a\right\Vert _{X_{U}\left( n\right) }$. Consequently, $\left\Vert
\cdot \right\Vert _{X_{U}\left( n\right) }$ is a lattice norm.

Next, since for every $\left\{ x_{i}\right\} _{i=1}^{n}\in \mathfrak{B}%
_{n}\left( X\right) $ and any permutation $\pi $ of $\left\{ 1,\dots
,n\right\} $ we have $\left\{ x_{\pi (i)}\right\} _{i=1}^{n}\in \mathfrak{B}%
_{n}\left( X\right) $, denoting by $\sigma ^{-1}$ the inverse permutation,
we obtain 
\begin{equation*}
\left\Vert \sum_{i=1}^{n}c_{i}x_{i}\right\Vert _{X}=\left\Vert
\sum_{i=1}^{n}a_{\sigma \left( i\right) }x_{i}\right\Vert _{X}=\left\Vert
\sum_{i=1}^{n}a_{i}x_{\sigma ^{-1}\left( i\right) }\right\Vert _{X}\leq
\left\Vert a\right\Vert _{X_{U}\left( n\right) }.
\end{equation*}%
Hence, $\left\Vert c\right\Vert _{X_{U}\left( n\right) }\leq \left\Vert
a\right\Vert _{X_{U}\left( n\right) }$, and by symmetry we obtain that the
norm $\left\Vert \cdot \right\Vert _{X_{U}\left( n\right) }$ is symmetric.

$\left( ii\right) :$ $\Phi _{n}\left( \cdot \right)$. In the same way, as
above, we have%
\begin{equation*}
\Phi _{n}\left( b\right) \leq \left\Vert \sum_{i=1}^{n}b_{i}x_{i}\right\Vert
_{X}\leq \left\Vert \sum_{i=1}^{n}a_{i}x_{i}\right\Vert _{X}.
\end{equation*}%
Thus, $\Phi _{n}\left( b\right) \leq \Phi _{n}\left( a\right)$, and so $\Phi
_{n}\left( \cdot \right)$ is a lattice functional. Also, arguing precisely
as in the case $\left( i\right) $, we obtain $\Phi _{n}\left( c\right) \leq
\Phi _{n}\left( a\right)$, and hence this functional is symmetric.

$\left( iii\right) :$ $\left\Vert \cdot \right\Vert _{X_{L}\left( n\right)
}. $ Let $a=\sum_{k\in F}a^{k}$, where $F\subseteq \mathbb{N}$ is finite and 
$a^{k}=\left( a^k_i\right) _{i=1}^{n}$, $k\in F$. For each $i\in
\left\{1,\dots,n\right\} $ we have 
\begin{equation*}
\left\vert b_{i}\right\vert \leq \left\vert a_{i}\right\vert \leq \sum_{k\in
F}\left\vert a^k_i\right\vert.
\end{equation*}%
One can readily select $b^k_i$ such that $b_{i}=\sum_{k\in F}b^k_i$, $1\le
i\le n$, and $\left\vert b^k_i\right\vert \leq \left\vert a^k_i\right\vert$
for all $k$ and $i$. Then, setting $b^{k}=\left\{ b^k_i\right\}_{i=1}^{n}$, $%
k\in F$, we have $b=\sum_{k\in F}b^{k}$ and $\Phi _{n}\left( b^{k}\right)
\leq \Phi _{n}\left( a^{k}\right)$, which implies 
\begin{equation*}
\left\Vert b\right\Vert _{X_{L}\left( n\right) }\leq \sum_{k\in F}\Phi
_{n}\left( b^{k}\right) \leq \sum_{k\in F}\Phi _{n}\left( a^{k}\right).
\end{equation*}%
In consequence, $\left\Vert b\right\Vert _{X_{L}\left( n\right) }\leq
\left\Vert a\right\Vert _{X_{L}\left( n\right) }$, that is, the norm $%
\left\Vert \cdot \right\Vert _{X_{L}\left( n\right) }$ is lattice.

Next, note that%
\begin{equation*}
\sigma a=\sum_{k\in F}\sigma a^{k},
\end{equation*}%
and hence, by $\left( ii\right)$, 
\begin{equation*}
\left\Vert \sigma a\right\Vert _{X_{L}\left( n\right) }\leq \sum_{k\in
F}\Phi _{n}\left( \sigma a^{k}\right) =\sum_{k\in F}\Phi _{n}\left(
a^{k}\right).
\end{equation*}%
Thus, $\left\Vert \sigma a\right\Vert _{X_{L}\left( n\right) }\leq
\left\Vert a\right\Vert _{X_{L}\left( n\right) }$ and so $\left\Vert \cdot
\right\Vert _{X_{L}\left( n\right) }$ is a symmetric norm.
\end{proof}

An immediate consequence of this lemma is the following

\begin{corollary}
\label{lattice} $\left\Vert \cdot \right\Vert _{X_{L}}$ and $\left\Vert
\cdot \right\Vert _{X_{U}}$ are lattice norms.
\end{corollary}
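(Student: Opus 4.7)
The plan is that this corollary follows essentially immediately from Lemma \ref{Lemma_xln_propeties}, exploiting the fact that the suprema defining $\|\cdot\|_{X_U}$ and $\|\cdot\|_{X_L}$ preserve lattice monotonicity. First I would fix two real-valued sequences $a=\{a_i\}_{i=1}^\infty$ and $b=\{b_i\}_{i=1}^\infty$ with $|b_i|\leq |a_i|$ for every $i\in\mathbb{N}$, and assume $a\in X_U$ (resp.\ $a\in X_L$). The target conclusion is that $b\in X_U$ with $\|b\|_{X_U}\leq \|a\|_{X_U}$ (resp.\ for $X_L$).

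For a fixed $n\in\mathbb{N}$, the truncations $\{b_i\}_{i=1}^n$ and $\{a_i\}_{i=1}^n$ again satisfy the coordinate-wise inequality $|b_i|\leq |a_i|$ for $1\leq i\leq n$. By part $(i)$ of Lemma \ref{Lemma_xln_propeties}, the functional $\|\cdot\|_{X_U(n)}$ is lattice monotone on $\mathbb{R}^n$, hence
\[
\|\{b_i\}_{i=1}^n\|_{X_U(n)}\leq \|\{a_i\}_{i=1}^n\|_{X_U(n)}\leq \sup_{m\in\mathbb{N}}\|\{a_i\}_{i=1}^m\|_{X_U(m)}=\|a\|_{X_U}.
\]
Taking the supremum of the left-hand side over $n\in\mathbb{N}$ yields $\|b\|_{X_U}\leq \|a\|_{X_U}$, which (in particular) gives $b\in X_U$. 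The argument for $X_L$ is verbatim, invoking part $(iii)$ of Lemma \ref{Lemma_xln_propeties} instead of part $(i)$.

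There is no real obstacle here: the lattice monotonicity of each finite-level norm is the content of Lemma \ref{Lemma_xln_propeties}, and monotonicity passes through the supremum trivially. The only slight subtlety worth a single remark is that we do not need to separately verify $b\in X_U$ or $b\in X_L$; the bound on the supremum of truncations automatically places $b$ in the respective space, since by construction these spaces consist exactly of the sequences for which that supremum is finite.
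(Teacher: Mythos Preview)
Your proof is correct and is precisely the argument the paper has in mind: the corollary is stated as an immediate consequence of Lemma \ref{Lemma_xln_propeties}, and your write-up simply spells out that the lattice monotonicity of each $\|\cdot\|_{X_U(n)}$ and $\|\cdot\|_{X_L(n)}$ passes to the supremum defining $\|\cdot\|_{X_U}$ and $\|\cdot\|_{X_L}$. One minor cosmetic point: Lemma \ref{Lemma_xln_propeties} is not stated with enumerated parts (the labels $(i)$--$(iii)$ appear only in its proof), so it would read more accurately to cite the lemma itself rather than ``part $(i)$'' or ``part $(iii)$''.
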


Our next two propositions state that the operators $I_{n+1}:\mathbb{R}%
^{n+1}\rightarrow \mathbb{R}^{n}$ are contractions with respect to each of
the three functionals considered in the latter lemma.

\begin{proposition}
\label{Lemma_xl_contractions} Let $n\in\mathbb{N}$. For each infinite
dimensional Banach lattice $X$ and all $a\in \mathbb{R}^{n+1}$ we have 
\begin{eqnarray*}
\Phi _{n}\left( I_{n+1}a\right) &\leq &\Phi _{n+1}\left( a\right)
\end{eqnarray*}
and 
\begin{eqnarray*}
\left\Vert I_{n+1}a\right\Vert _{X_{L}\left( n\right) } &\leq &\left\Vert
a\right\Vert _{X_{L}\left( n+1\right)}.
\end{eqnarray*}
\end{proposition}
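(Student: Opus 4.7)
My plan is to prove both inequalities by directly unpacking the definitions of $\Phi_n$ and $\|\cdot\|_{X_L(n)}$, with the first one doing all of the real work and the second one following formally.

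For the first inequality, I would fix an arbitrary sequence $\{x_i\}_{i=1}^{n+1} \in \mathfrak{B}_{n+1}(X)$, which is nonempty because $X$ is infinite dimensional (Lemma \ref{Lemma_bn}). Its truncation $\{x_i\}_{i=1}^{n}$ consists of $n$ normalized pairwise disjoint elements, so it lies in $\mathfrak{B}_n(X)$. The key observation is that, since the $x_i$ are pairwise disjoint,
\[
\left|\sum_{i=1}^{n} a_i x_i\right| \;=\; \sum_{i=1}^{n} |a_i|\,|x_i| \;\le\; \sum_{i=1}^{n+1} |a_i|\,|x_i| \;=\; \left|\sum_{i=1}^{n+1} a_i x_i\right|,
\]
so by lattice monotonicity of the norm on $X$,
\[
\Phi_n(I_{n+1}a) \;\le\; \Big\|\sum_{i=1}^{n} a_i x_i\Big\|_X \;\le\; \Big\|\sum_{i=1}^{n+1} a_i x_i\Big\|_X .
\]
Taking the infimum over $\{x_i\}_{i=1}^{n+1} \in \mathfrak{B}_{n+1}(X)$ on the right delivers $\Phi_n(I_{n+1}a) \le \Phi_{n+1}(a)$.

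For the second inequality, I would let $a = \sum_{k \in F} a^k$ be any finite decomposition with $a^k \in \mathbb{R}^{n+1}$. The operator $I_{n+1}$ is linear, so $I_{n+1}a = \sum_{k \in F} I_{n+1}a^k$ is a valid decomposition of $I_{n+1}a$ in $\mathbb{R}^n$, and the definition of $\|\cdot\|_{X_L(n)}$ combined with the inequality just proved yields
\[
\|I_{n+1}a\|_{X_L(n)} \;\le\; \sum_{k \in F} \Phi_n(I_{n+1}a^k) \;\le\; \sum_{k \in F} \Phi_{n+1}(a^k).
\]
Passing to the infimum over all such decompositions of $a$ gives $\|I_{n+1}a\|_{X_L(n)} \le \|a\|_{X_L(n+1)}$.

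There is no genuine obstacle here; the proposition is essentially a monotonicity statement that reduces to the disjointness of elements in $\mathfrak{B}_{n+1}(X)$. The only point requiring care is confirming that $\mathfrak{B}_{n+1}(X)$ is nonempty so that $\Phi_{n+1}(a)$ is not vacuously $+\infty$; this is exactly the content of Lemma \ref{Lemma_bn} under the standing hypothesis that $X$ is infinite dimensional.
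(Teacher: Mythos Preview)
Your proof is correct and follows essentially the same route as the paper's: both arguments rest on the observation that truncating a sequence in $\mathfrak{B}_{n+1}(X)$ yields one in $\mathfrak{B}_n(X)$, with the norm of the shorter disjoint sum dominated by that of the longer one. The only cosmetic difference is that the paper first reduces to the case $a_{n+1}=0$ via the lattice monotonicity of $\Phi_{n+1}$ (Lemma~\ref{Lemma_xln_propeties}), whereas you handle the extra coordinate directly through the pointwise inequality $\sum_{i=1}^{n}|a_i|\,|x_i|\le\sum_{i=1}^{n+1}|a_i|\,|x_i|$ in $X$; the second inequality is then derived identically in both versions.
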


\begin{proof}
Put $a=\left\{ a_{i}\right\} _{i=1}^{n+1}$ and $b=I_{n+1}a=\left\{
a_{i}\right\} _{i=1}^{n}$.

By Lemma \ref{Lemma_xln_propeties}, $\Phi _{n}$ is lattice monotone.
Consequently, it suffices to prove the result in the special case of $%
a_{n+1}=0.$ We set 
\begin{equation*}
\mathfrak{B}_{n}^{\ast }\left( X\right) :=\left\{ \left\{ x_{i}\right\}
_{i=1}^{n}:\,\mbox{there is}\;x_{n+1}\in X\;\mbox{such that}\;\left\{
x_{i}\right\} _{i=1}^{n+1}\in \mathfrak{B}_{n+1}\left( X\right) \right\} .
\end{equation*}%
Obviously, $\mathfrak{B}_{n}^{\ast }\left( X\right) \subseteq \mathfrak{B}%
_{n}\left( X\right) $, which implies, since $a_{n+1}=0,$ the following: 
\begin{eqnarray*}
\Phi _{n}\left( b\right) &=&\inf \left\{ \left\Vert
\sum_{i=1}^{n}a_{i}x_{i}\right\Vert _{X}:\left\{ x_{i}\right\} _{i=1}^{n}\in 
\mathfrak{B}_{n}\left( X\right) \right\} \\
&\leq &\inf \left\{ \left\Vert \sum_{i=1}^{n}a_{i}x_{i}\right\Vert
_{X}:\left\{ x_{i}\right\} _{i=1}^{n}\in \mathfrak{B}_{n}^{\ast }\left(
X\right) \right\} \\
&=&\inf \left\{ \left\Vert
\sum_{i=1}^{n}a_{i}x_{i}+a_{n+1}x_{n+1}\right\Vert _{X}:\left\{
x_{i}\right\} _{i=1}^{n+1}\in \mathfrak{B}_{n+1}\left( X\right) \right\} \\
&=&\Phi _{n+1}\left( a\right) ,
\end{eqnarray*}%
and the first inequality is proved.

To prove similar inequality for the norm $\left\Vert \cdot \right\Vert
_{X_{L}\left( n\right) }$, write $a=\sum_{k\in F}a^{k}$ for some finite
subset $F\subseteq \mathbb{N}$ and $a^{k}\in \mathbb{R}^{n+1}.$ Then, we
have 
\begin{equation*}
b=I_{n+1}Tr_{n+1}a=\sum_{k\in F}I_{n+1}Tr_{n+1}a^{k},
\end{equation*}%
and, by the first part, 
\begin{equation*}
\left\Vert b\right\Vert _{X_{L}\left( n\right) }\leq \sum_{k\in F}\Phi
_{n}\left( I_{n+1}Tr_{n+1}a^{k}\right) \leq \sum_{k\in F}\Phi _{n+1}\left(
Tr_{n+1}a^{k}\right) \leq \sum_{k\in F}\Phi _{n+1}\left( a^{k}\right) .
\end{equation*}%
Hence, 
\begin{equation*}
\left\Vert b\right\Vert _{X_{L}\left( n\right) }\leq \left\Vert a\right\Vert
_{X_{L}\left( n+1\right) },
\end{equation*}%
and the proof is completed.
\end{proof}

\begin{proposition}
\label{Prop_xu_contractions} Let $X$ be an infinite dimensional Banach
lattice such that $l_{\infty }$ is not finitely lattice representable in $X$%
. The following holds for every positive integer $n$ and $a\in \mathbb{R}%
^{n+1}$%
\begin{equation*}
\left\Vert I_{n+1}a\right\Vert _{X_{U}\left( n\right) }\leq \left\Vert
a\right\Vert _{X_{U}\left( n+1\right) }.
\end{equation*}
\end{proposition}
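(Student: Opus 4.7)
The plan is to fix a sequence $\{x_i\}_{i=1}^n\in\mathfrak{B}_n(X)$ and show that for every $a=(a_1,\dots,a_{n+1})\in\mathbb{R}^{n+1}$ one has
\[
\Bigl\|\sum_{i=1}^n a_i x_i\Bigr\|_X \leq \|a\|_{X_U(n+1)},
\]
from which the claim follows by taking the supremum over $\mathfrak{B}_n(X)$ in the definition of $\|I_{n+1}a\|_{X_U(n)}$. The idea is to enlarge the given $n$-tuple to an $(n+1)$-tuple in $\mathfrak{B}_{n+1}(X)$ using Lemma \ref{Lemma_bn_extension}, then exploit the symmetry and lattice monotonicity of $\|\cdot\|_{X_U(n+1)}$ (Lemma \ref{Lemma_xln_propeties}).

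Fix $\varepsilon>0$ and apply Lemma \ref{Lemma_bn_extension} to $\{x_i\}_{i=1}^n$, obtaining some $\{u_i\}_{i=1}^{n+1}\in\mathfrak{B}_{n+1}(X)$ of either type $(i)$ or type $(ii)$. In case $(i)$, we have $u_i=x_i$ for $i\leq n$, so
\[
\Bigl\|\sum_{i=1}^n a_i x_i\Bigr\|_X=\Bigl\|\sum_{i=1}^{n+1} a'_i u_i\Bigr\|_X\leq \|a'\|_{X_U(n+1)}\leq \|a\|_{X_U(n+1)},
\]
where $a'=(a_1,\dots,a_n,0)$; the second inequality is simply the definition of $\|\cdot\|_{X_U(n+1)}$, and the last one uses lattice monotonicity of $\|\cdot\|_{X_U(n+1)}$.

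Case $(ii)$ is the main obstacle. Here $u_j=x_{\psi(j)}$ for $j\leq n-1$ and $x_k=\alpha u_n+\beta u_{n+1}$ with $\|u_n-x_k\|_X\leq\varepsilon$. The natural first attempt is to write
\[
\sum_{i=1}^n a_i x_i=\sum_{j=1}^{n-1}a_{\psi(j)}u_j+a_k\alpha u_n+a_k\beta u_{n+1},
\]
but the resulting coefficient vector $(a_{\psi(1)},\dots,a_{\psi(n-1)},a_k\alpha,a_k\beta)$ is not directly comparable to $a$ via lattice/symmetry, so this route stalls. The fix is to use the approximation $u_n\approx x_k$: replace $\alpha u_n+\beta u_{n+1}$ by $u_n$ at an error cost of $|a_k|\varepsilon$. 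Concretely,
\[
\Bigl\|\sum_{i=1}^n a_i x_i\Bigr\|_X\leq \Bigl\|\sum_{j=1}^{n-1}a_{\psi(j)}u_j+a_k u_n\Bigr\|_X+|a_k|\,\|x_k-u_n\|_X.
\]
Since $\{u_i\}_{i=1}^{n+1}\in\mathfrak{B}_{n+1}(X)$, the first term on the right is bounded by $\|c\|_{X_U(n+1)}$, where $c=(a_{\psi(1)},\dots,a_{\psi(n-1)},a_k,0)\in\mathbb{R}^{n+1}$. Because $\psi$ is a bijection of $\{1,\dots,n-1\}$ onto $\{1,\dots,n\}\setminus\{k\}$, the vector $c$ is a permutation of $(a_1,\dots,a_n,0)$, so by symmetry of $\|\cdot\|_{X_U(n+1)}$ (Lemma \ref{Lemma_xln_propeties}) we obtain $\|c\|_{X_U(n+1)}=\|(a_1,\dots,a_n,0)\|_{X_U(n+1)}$; another application of lattice monotonicity dominates this by $\|a\|_{X_U(n+1)}$.

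Combining, we get $\|\sum_{i=1}^n a_i x_i\|_X\leq \|a\|_{X_U(n+1)}+|a_k|\varepsilon$ in case $(ii)$, and since $\varepsilon>0$ was arbitrary, the desired bound holds. Taking the supremum over $\{x_i\}_{i=1}^n\in\mathfrak{B}_n(X)$ yields $\|I_{n+1}a\|_{X_U(n)}\leq \|a\|_{X_U(n+1)}$, completing the proof. The only delicate step is the approximation trick that absorbs the unwanted splitting $\alpha u_n+\beta u_{n+1}$ into $u_n$ plus a vanishing error; everything else is a routine combination of the definitions with the symmetry and monotonicity already established.
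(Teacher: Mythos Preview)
Your proof is correct and follows essentially the same approach as the paper's: fix $\{x_i\}_{i=1}^n\in\mathfrak{B}_n(X)$, apply Lemma~\ref{Lemma_bn_extension}, handle case~(i) trivially, and in case~(ii) use the approximation $\|u_n-x_k\|_X\leq\varepsilon$ together with the symmetry and lattice monotonicity of $\|\cdot\|_{X_U(n+1)}$ from Lemma~\ref{Lemma_xln_propeties}. The only cosmetic differences are that the paper first reduces to $a_{n+1}=0$ and then bounds the error term $|a_k|\varepsilon$ by $\|a\|_{X_U(n+1)}\varepsilon$ to obtain the clean factor $(1+\varepsilon)$, whereas you carry $|a_k|\varepsilon$ directly and invoke lattice monotonicity at the end; both are equivalent.
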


\begin{proof}
We put again $a=\left\{ a_{i}\right\} _{i=1}^{n+1}$ and $b=I_{n+1}a=\left\{
a_{i}\right\} _{i=1}^{n}$. As in the proof of Proposition \ref%
{Lemma_xl_contractions}, we may assume that $a_{n+1}=0$.

Let $\left\{ x_{i}\right\}_{i=1}^{n}\in \mathfrak{B}_{n}\left( X\right) $
and $\varepsilon >0.$ By Lemma \ref{Lemma_bn_extension}, we can select a
sequence $\left\{ u_{n}\right\}_{i=1}^{n+1}\in \mathfrak{B}_{n+1}\left(
X\right) $ that satisfies one of the conditions $\left(i\right) $ and $%
\left( ii\right) $ of that lemma.

In the case when $\left( i\right) $ is fulfilled, we have%
\begin{equation*}
\left\Vert \sum_{i=1}^{n}a_{i}x_{i}\right\Vert _{X}=\left\Vert
\sum_{i=1}^{n+1}a_{i}u_{i}\right\Vert _{X}\leq \left\Vert a\right\Vert
_{X_{U}\left( n+1\right) },
\end{equation*}%
and the desired result follows.

Assume now that we have $\left( ii\right) $ and let $k$, $\psi $ be as in
the statement of Lemma \ref{Lemma_bn_extension}. Define the vector $%
c=\left\{ c_{i}\right\} _{i=1}^{n+1}\in \mathbb{R}^{n+1}$ by 
\begin{equation*}
c_{i}=\left\{ 
\begin{array}{c}
a_{\psi {\left( i\right) }}:i\neq k,1\leq i\leq n-1 \\ 
c_{n}=a_{k} \\ 
c_{n+1}=0%
\end{array}%
\right\}
\end{equation*}%
Then, we have%
\begin{equation*}
\left\Vert \sum_{i=1}^{n}a_{i}x_{i}\right\Vert _{X}\leq \left\Vert
\sum_{i=1}^{n+1}c_{i}u_{i}\right\Vert _{X}+\left\Vert
\sum_{i=1}^{n+1}c_{i}u_{i}-\sum_{i=1}^{n}a_{i}x_{i}\right\Vert _{X}.
\end{equation*}%
Since 
\begin{eqnarray*}
\sum_{i=1}^{n+1}c_{i}u_{i}-\sum_{i=1}^{n}a_{i}x_{i}
&=&\sum_{i=1}^{n-1}a_{\psi \left( i\right) }x_{\psi \left( i\right)
}+a_{k}u_{n}+0\cdot u_{n+1}-\sum_{i=1}^{n}a_{i}x_{i} \\
&=&\left( \sum_{i=1,i\neq k}^{n}a_{i}x_{i}\right)
+a_{k}u_{n}-a_{k}x_{k}-\sum_{i=1,i\neq k}^{n}a_{i}x_{i} \\
&=&a_{k}\left( u_{n}-x_{k}\right) ,
\end{eqnarray*}%
$\left\vert a_{k}\right\vert \leq \left\Vert a\right\Vert _{X_{U}\left(
n+1\right) }$, $1\leq k\leq n$, and $\left\Vert \cdot \right\Vert
_{X_{U}\left( n+1\right) }$ is a symmetric norm, we conclude 
\begin{equation*}
\left\Vert \sum_{i=1}^{n}a_{i}x_{i}\right\Vert _{X}\leq \left\Vert
\sum_{i=1}^{n+1}c_{i}u_{i}\right\Vert _{X}+\left\Vert u_{n}-x_{k}\right\Vert
_{X}\leq \left\Vert c\right\Vert _{X_{U}\left( n+1\right) }+\varepsilon
\left\Vert a\right\Vert _{X_{U}\left( n+1\right) }=(1+\varepsilon
)\left\Vert a\right\Vert _{X_{U}\left( n+1\right) }.
\end{equation*}%
Thus, since $\varepsilon >0$ is arbitrary, $\left\Vert b\right\Vert
_{X_{U}\left( n\right) }\leq \left\Vert a\right\Vert _{X_{U}\left(
n+1\right) }$, what is required.
\end{proof}

\begin{proposition}
\label{Prop_xl_not_in_c0} Let $X$ be an infinite dimensional Banach lattice
such that $X_{L}$ is not contained in $c_{0}.$ Then, $l_{\infty }$ is
finitely lattice representable in $X$.
\end{proposition}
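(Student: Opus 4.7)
The plan is to prove the contrapositive: assuming that $l_\infty$ is \emph{not} finitely lattice representable in $X$, I will show $X_L\subseteq c_0$. This converts the problem into estimating the $X_L$-norm from below by an $l_p$-norm with $p<\infty$, which forces sequences in $X_L$ to vanish at infinity.

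First, by Proposition \ref{Prop_dia}, the non-representability hypothesis supplies a finite $p\ge 1$ and a constant $M$ such that $X$ admits a lower $p$-estimate with constant $M$. From the very definition of $\Phi_n$ and the lower $p$-estimate, for every $n$ and every $a=(a_i)_{i=1}^n\in\mathbb{R}^n$ and every $\{x_i\}_{i=1}^n\in\mathfrak{B}_n(X)$ one has $\|a\|_{l_p^n}\le M\|\sum_{i=1}^n a_ix_i\|_X$, and passing to the infimum over $\mathfrak{B}_n(X)$ yields $\|a\|_{l_p^n}\le M\,\Phi_n(a)$.

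Next, I will upgrade this to a bound against $\|\cdot\|_{X_L(n)}$. For any decomposition $a=\sum_{k\in F}a^k$ with $F\subset\mathbb{N}$ finite and $a^k\in\mathbb{R}^n$, the triangle inequality in $l_p^n$ combined with the $\Phi_n$-estimate gives $\|a\|_{l_p^n}\le\sum_{k\in F}\|a^k\|_{l_p^n}\le M\sum_{k\in F}\Phi_n(a^k)$. Taking the infimum over all such decompositions produces $\|a\|_{l_p^n}\le M\,\|a\|_{X_L(n)}$. Finally, for any $a\in X_L$, this gives $\|\{a_i\}_{i=1}^n\|_{l_p^n}\le M\|a\|_{X_L}$ uniformly in $n$, and since $p<\infty$, letting $n\to\infty$ (e.g.\ by monotone convergence applied to $|a|$) yields $\|a\|_{l_p}\le M\|a\|_{X_L}<\infty$, so $a\in l_p\subseteq c_0$.

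There is really no major obstacle here: the argument is essentially a sharpening of the chain of inequalities used in Proposition \ref{coincidence with lp}(ii), but robust enough to work without invoking any further structure on $X_L$ beyond the elementary definition of its norm. The only point to check is that Proposition \ref{Prop_dia}, which supplies the finite lower estimate, is a standard external fact from \cite{LT79} and is not part of the circular chain of Section \ref{R.invariance}, so invoking it during the buildup to Theorem \ref{Th_XL_XU_Prop} is legitimate.
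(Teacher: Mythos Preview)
Your proof is correct, but it takes a different route from the paper's. The paper argues directly: starting from a sequence $a\in X_L\setminus c_0$, it truncates to a $\{0,1\}$-valued sequence $b\in X_L$, picks for each $m$ a set $U_m$ of $m$ indices where $b_i=1$, and then uses the definition of $\Phi_n$ and $\|\cdot\|_{X_L(n)}$ to show that for \emph{any} $\{y_i\}\in\mathfrak{B}_{k_m}(X)$ the block $\{y_i:i\in U_m\}$ is $\|a\|_{X_L}$-equivalent to the $l_\infty^m$ basis; this exhibits crude finite lattice representability of $l_\infty$ in $X$ explicitly. Your contrapositive argument instead imports the classical equivalence of Proposition~\ref{Prop_dia} as a black box and then reproduces the embedding $X_L\hookrightarrow l_p$ exactly as in the proof of Proposition~\ref{coincidence with lp}(ii). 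Your approach is shorter and more conceptual, and your observation that Proposition~\ref{Prop_dia} is an external result from \cite{LT79} (already invoked elsewhere in Section~\ref{R.invariance}) correctly dispels any circularity concern. The paper's direct argument, on the other hand, is self-contained in the sense that it does not appeal to the deeper structural result behind Proposition~\ref{Prop_dia}, and it makes the mechanism by which $X_L\not\subset c_0$ produces uniformly bounded $l_\infty^m$-copies in $X$ completely explicit.
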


\begin{proof}
By the assumption, there exists a sequence $a=\left\{ a_{i}\right\}
_{i=1}^{\infty }\in X_{L}$ with $\lim \sup_{i\rightarrow \infty }\left\vert
a_{i}\right\vert >\delta$ for some $\delta>0$. By scaling we may assume that 
$\delta =1.$ Define the sequence $b=\left\{ b_{i}\right\} _{i=1}^{\infty }$
by 
\begin{equation*}
b_{i}=\left\{ 
\begin{array}{c}
1:\left\vert a_{i}\right\vert >1 \\ 
0:\left\vert a_{i}\right\vert \leq 1%
\end{array}%
\right\}
\end{equation*}%
Since $\left\vert b_{i}\right\vert \leq \left\vert a_{i}\right\vert $ for
all $i=1,2,\dots$, by Corollary \ref{lattice}, $b\in X_{L}$ and $%
\|b\|_{X_{L}}\le \|a\|_{X_{L}}$.

Further, for each positive integer $m$ select $k_{m}$ such that the set 
\begin{equation*}
U_{m}:=\left\{ i:\,1\leq i\leq k_{m},b_{i}=1\right\}
\end{equation*}%
has cardinality $m.$ Note that for any sequence $\left\{ y_{i}\right\}
_{i=1}^{k_{m}}\in \mathfrak{B}_{k_{m}}\left( X\right) $ it holds 
\begin{equation*}
\left\Vert \sum_{i\in U_{m}}y_{i}\right\Vert _{X}=\left\Vert
\sum_{i=1}^{k_{m}}b_{i}y_{i}\right\Vert _{X}\leq \Phi _{k_{m}}\left(
b\right) .
\end{equation*}%
Hence, if $b=\sum_{j\in F}b^{j}$ for some finite set $F$ and $b^{j}\in 
\mathbb{R}^{k_{m}}$, by the triangle inequality, we have%
\begin{equation*}
\left\Vert \sum_{i\in U_{m}}y_{i}\right\Vert =\left\Vert
\sum_{i=1}^{k_{m}}\sum_{j\in F}\left\langle b^{j},e_{i}\right\rangle
y_{i}\right\Vert _{X}\leq \sum_{j\in F}\left\Vert
\sum_{i=1}^{k_{m}}\left\langle b^{j},e_{i}\right\rangle y_{i}\right\Vert
_{X}\leq \sum_{j\in F}\Phi _{k_{m}}\left( b^{j}\right) .
\end{equation*}%
Consequently, for any sequence $\left\{ t_{i}\right\} _{i\in U_{m}}$ of
scalars we obtain 
\begin{equation*}
\sup_{i\in U_{m}}\left\vert t_{i}\right\vert \leq \left\Vert \sum_{i\in
U_{m}}t_{i}y_{i}\right\Vert _{X}\leq \sup_{i\in U_{m}}\left\vert
t_{i}\right\vert \left\Vert \sum_{i\in U_{m}}y_{i}\right\Vert _{X}\leq
\sup_{i\in U_{m}}\left\vert t_{i}\right\vert \left\Vert b\right\Vert
_{X_{L}(k_{m})}\leq \left\Vert a\right\Vert _{X_{L}}\sup_{i\in
U_{m}}\left\vert t_{i}\right\vert .
\end{equation*}%
Since the set $U_{m}$ has cardinality $m,$ which is arbitrary, and $%
\left\Vert a\right\Vert _{X_{L}}$ is a constant that does not depend on $m$,
the latter inequality means that $l_{\infty }$ is crudely finitely lattice
representable in $X$ (see Section \ref{estimates}). Since the latter is
equivalent to the finite lattice representability of $l_{\infty }$ in $X$ 
\cite[p.~288]{JMST}, the proof is completed.
\end{proof}

\begin{proposition}
\label{Prop_xl_a-oo} Assume that $l_{\infty }$ is finitely lattice
representable in a Banach lattice $X$. Then $X_{L}$ coincides with $%
l_{\infty }$ isometrically.
\end{proposition}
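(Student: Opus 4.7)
The plan is to combine the right-hand embedding in \eqref{embeddings}, namely $X_L \overset{1}{\hookrightarrow} l_\infty$, which is already established, with the reverse isometric embedding $l_\infty \overset{1}{\hookrightarrow} X_L$, which is what remains to be proved. Concretely, it suffices to show that for every $a = (a_i)_{i=1}^\infty \in l_\infty$ and every $n \in \mathbb{N}$ one has
\[
\|a^{(n)}\|_{X_L(n)} \le \|a\|_{l_\infty},
\]
since passing to the supremum over $n$ then yields $\|a\|_{X_L} \le \|a\|_{l_\infty}$ (in particular $a \in X_L$).

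First, I would reduce from the $X_L(n)$-norm to the functional $\Phi_n$: by taking the trivial one-term decomposition $a^{(n)} = a^{(n)}$ in the definition of $\|\cdot\|_{X_L(n)}$, we immediately get $\|a^{(n)}\|_{X_L(n)} \le \Phi_n(a^{(n)})$. Thus it is enough to estimate $\Phi_n(b) \le \|b\|_{l_\infty^n}$ for each $b \in \mathbb{R}^n$.

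To bound $\Phi_n(b)$, I would exploit the hypothesis that $l_\infty$ is finitely lattice representable in $X$ (Definition \ref{Def_ap}). Given $\varepsilon > 0$, choose pairwise disjoint $x_1, \dots, x_n \in X$ such that
\[
\|c\|_{l_\infty^n} \le \Bigl\| \sum_{i=1}^n c_i x_i \Bigr\|_X \le (1+\varepsilon)\|c\|_{l_\infty^n}
\]
for all scalar sequences $c = (c_i)_{i=1}^n$. Testing with $c = e_i$ gives $1 \le \|x_i\|_X \le 1+\varepsilon$, so setting $\tilde{x}_i := x_i/\|x_i\|_X$ produces a sequence $\{\tilde{x}_i\}_{i=1}^n \in \mathfrak{B}_n(X)$. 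Applying the upper estimate above to the sequence $(b_i/\|x_i\|_X)_{i=1}^n$ yields
\[
\Bigl\| \sum_{i=1}^n b_i \tilde{x}_i \Bigr\|_X = \Bigl\| \sum_{i=1}^n \frac{b_i}{\|x_i\|_X}\, x_i \Bigr\|_X \le (1+\varepsilon)\,\Bigl\|\Bigl(\tfrac{b_i}{\|x_i\|_X}\Bigr)_{i=1}^n\Bigr\|_{l_\infty^n} \le (1+\varepsilon)\|b\|_{l_\infty^n}.
\]
Hence $\Phi_n(b) \le (1+\varepsilon)\|b\|_{l_\infty^n}$, and letting $\varepsilon \downarrow 0$ gives $\Phi_n(b) \le \|b\|_{l_\infty^n}$, which completes the chain of inequalities.

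There is no real obstacle here: the main subtlety is the mild bookkeeping step of normalizing the approximate $l_\infty^n$-basis $\{x_i\}$ so that it actually lies in $\mathfrak{B}_n(X)$ (which forces unit norms), and verifying that normalization only improves the upper constant from $(1+\varepsilon)$ for fixed $\varepsilon$. Combining the resulting inequality $\|a\|_{X_L} \le \|a\|_{l_\infty}$ with the already known isometric embedding $X_L \overset{1}{\hookrightarrow} l_\infty$ from \eqref{embeddings} yields the isometric equality $X_L = l_\infty$.
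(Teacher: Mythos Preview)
Your proof is correct and follows essentially the same approach as the paper's: both use the finite lattice representability of $l_\infty$ in $X$ to produce an almost-isometric disjoint copy of $l_\infty^n$, normalize it into $\mathfrak{B}_n(X)$, and then bound $\Phi_n$ (and hence $\|\cdot\|_{X_L(n)}$) by $(1+\varepsilon)\|\cdot\|_{l_\infty^n}$, combining this with the already-known embedding $X_L \overset{1}{\hookrightarrow} l_\infty$ from \eqref{embeddings}.
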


\begin{proof}
Fix a positive integer $n$ and $\varepsilon >0.$ By the assumption, there
exists a sequence $\left\{ x_{i}\right\} _{i=1}^{n}$ of pair-wise disjoint
elements in $X$ such that%
\begin{equation*}
\sup_{1\leq i\leq n}\left\vert a_{i}\right\vert \leq \left\Vert
\sum_{i=1}^{n}a_{i}x_{i}\right\Vert _{X}\leq \left( 1+\varepsilon \right)
\sup_{1\leq i\leq n}\left\vert a_{i}\right\vert
\end{equation*}%
for any scalar sequence $a=\left\{ a_{i}\right\} _{i=1}^{n}.$ Putting $%
z_{i}:=x_{i}/\left\Vert x_{i}\right\Vert _{X}$, we get $\left\{
z_{i}\right\} _{i=1}^{n}\in \mathfrak{B}_{n}\left( X\right) .$ Hence, in
view of embeddings \eqref{embeddings}, we have 
\begin{eqnarray*}
\left\Vert a\right\Vert _{l_{\infty }^{n}} &\leq &\left\Vert a\right\Vert
_{X_{L}\left( n\right) }\leq \Phi _{n}\left( a\right) \leq \left\Vert
\sum_{i=1}^{n}a_{i}z_{i}\right\Vert _{X}=\left\Vert \sum_{i=1}^{n}\frac{a_{i}%
}{\left\Vert x_{i}\right\Vert _{X}}x_{i}\right\Vert _{X} \\
&\leq &\left( 1+\varepsilon \right) \sup_{1\leq i\leq n}\frac{\left\vert
a_{i}\right\vert }{\left\Vert x_{i}\right\Vert _{X}}\leq \left(
1+\varepsilon \right) \left\Vert a\right\Vert _{l_{\infty }^{n}}
\end{eqnarray*}%
Since $\varepsilon >0$ is arbitrary, we conclude that $\left\Vert
a\right\Vert _{l_{\infty }^{n}}=\left\Vert a\right\Vert _{X_{L}\left(
n\right) }$ for all $n=1,2,\dots $, which implies that $X_{L}=l_{\infty }$
isometrically.
\end{proof}

Prove now the dual result.

\begin{proposition}
\label{Prop-XU-l1} If $l_{1}$ is finitely lattice representable in a Banach
lattice $X$, then $X_{U}$ coincides isometrically with $l_{1}.$
\end{proposition}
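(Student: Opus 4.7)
The plan is to mirror the argument for Proposition \ref{Prop_xl_a-oo}, using the supremum definition of $\|\cdot\|_{X_U(n)}$ in place of the infimum definition of $\Phi_n$. The embedding $l_1 \overset{1}{\hookrightarrow} X_U$ is already part of \eqref{embeddings}, so it remains only to show the reverse inequality $\|a\|_{l_1^n} \leq \|a\|_{X_U(n)}$ for every $n \in \mathbb{N}$ and $a = \{a_i\}_{i=1}^n \in \mathbb{R}^n$.

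Fix $n \in \mathbb{N}$ and $\varepsilon > 0$. By the finite lattice representability of $l_1$ in $X$, there exist pairwise disjoint elements $x_1, \ldots, x_n \in X$ such that for every scalar sequence $\{b_i\}_{i=1}^n$,
\begin{equation*}
\sum_{i=1}^n |b_i| \leq \Big\| \sum_{i=1}^n b_i x_i \Big\|_X \leq (1+\varepsilon) \sum_{i=1}^n |b_i|.
\end{equation*}
Specializing to the standard unit vectors gives $1 \leq \|x_i\|_X \leq 1+\varepsilon$. Set $z_i := x_i/\|x_i\|_X$, so that $\{z_i\}_{i=1}^n \in \mathfrak{B}_n(X)$.

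For any scalars $\{a_i\}_{i=1}^n$, the lower estimate above applied with $b_i := a_i/\|x_i\|_X$ yields
\begin{equation*}
\Big\| \sum_{i=1}^n a_i z_i \Big\|_X = \Big\| \sum_{i=1}^n \frac{a_i}{\|x_i\|_X} x_i \Big\|_X \geq \sum_{i=1}^n \frac{|a_i|}{\|x_i\|_X} \geq \frac{1}{1+\varepsilon} \sum_{i=1}^n |a_i|.
\end{equation*}
Since $\|a\|_{X_U(n)}$ is the supremum over all choices of $\{x_i\}_{i=1}^n \in \mathfrak{B}_n(X)$, this forces $\|a\|_{X_U(n)} \geq (1+\varepsilon)^{-1} \|a\|_{l_1^n}$. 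Letting $\varepsilon \downarrow 0$ gives $\|a\|_{X_U(n)} \geq \|a\|_{l_1^n}$, which together with the reverse inequality from \eqref{embeddings} gives equality. Taking the supremum over $n$ (and using Lemma \ref{Lemma semi-continuous} to extend from finitely supported sequences to all of $X_U$) yields $X_U = l_1$ isometrically.

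There is essentially no obstacle here: the argument is a direct dualization of the proof of Proposition \ref{Prop_xl_a-oo} and is in fact slightly simpler, since the definition of $\|\cdot\|_{X_U(n)}$ involves no auxiliary decomposition step analogous to $a = \sum_{k \in F} b^k$.
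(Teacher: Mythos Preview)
Your proof is correct and follows essentially the same approach as the paper: normalize the disjoint $l_1$-witnesses to obtain a sequence in $\mathfrak{B}_n(X)$, then use the lower bound from finite representability to push the $X_U(n)$-norm above $(1+\varepsilon)^{-1}\|a\|_{l_1^n}$. The appeal to Lemma~\ref{Lemma semi-continuous} at the end is unnecessary, since by definition $\|a\|_{X_U}=\sup_n\|\{a_i\}_{i=1}^n\|_{X_U(n)}=\sup_n\|a\|_{l_1^n}=\|a\|_{l_1}$ holds directly for every sequence $a$, not only finitely supported ones.
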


\begin{proof}
For every positive integer $n$ and $\varepsilon >0$ we can select a sequence 
$\left\{x_{i}\right\} _{i=1}^{n}$ of pair-wise disjoint elements such that%
\begin{equation*}
\sum_{i=1}^{n}\left\vert a_{i}\right\vert \leq \left\Vert
\sum_{i=1}^{n}a_{i}x_{i}\right\Vert _{X}\leq \left( 1+\varepsilon \right)
\sum_{i=1}^{n}\left\vert a_{i}\right\vert
\end{equation*}%
for any scalar sequence $a=\left\{ a_{i}\right\} _{i=1}^{n}.$ As in the
preceding proof, we put $z_{i}=x_{i}/\left\Vert x_{i}\right\Vert _{X}$. Then 
$\left\{z_{i}\right\} _{i=1}^{n}\in \mathfrak{B}_{n}\left( X\right)$, and
since $1\leq\left\Vert x_{i}\right\Vert _{X}\leq 1+\varepsilon $, $1\le i\le
n$, we have 
\begin{eqnarray*}
\left\Vert a\right\Vert _{l_{1}^{n}} &\geq &\left\Vert a\right\Vert
_{X_{U}\left( n\right) }\geq \left\Vert \sum_{i=1}^{n}a_{i}z_{i}\right\Vert
_{X}=\left\Vert \sum_{i=1}^{n}\frac{a_{i}}{\left\Vert x_{i}\right\Vert _{X}}%
x_{i}\right\Vert _{X} \\
&\geq &\sum_{i=1}^{n}\frac{\left\vert a_{i}\right\vert }{\left\Vert
x_{i}\right\Vert _{X}}\geq \left( 1+\varepsilon \right)
^{-1}\sum_{i=1}^{n}\left\vert a_{i}\right\vert =\left( 1+\varepsilon \right)
^{-1}\left\Vert a\right\Vert _{l_{1}^{n}}
\end{eqnarray*}%
This implies $\left\Vert a\right\Vert _{l_{1}^{n}}=\left\Vert
a\right\Vert_{X_{U}\left( n\right) }$ and thus the proof is complete.
\end{proof}

$\medskip$

As a result, all the pieces needed for the proof of Theorem \ref%
{Th_XL_XU_Prop} are in place.

\begin{proof}[Proof of Theorem \protect\ref{Th_XL_XU_Prop}]
Prove first the claim for $X_{L}.$ By Lemmas \ref{Lemma_xln_propeties} and %
\ref{Lemma_xl_contractions}, $\left\Vert \cdot \right\Vert _{X_{L}\left(
n\right) }$ is a lattice, symmetric norm for each positive integer $n$ and
the operators $I_{n}$ are contractions with respects to these norms. Hence,
if $X_{L}$ is embedded in $c_{0}$, then Proposition \ref%
{Prop_main_Prop_xl_xu} may be applied and we conclude that $X_{L}$ is a r.i.
Banach sequence lattice. In the case when $X_{L}$ is not embedded in $c_{0},$
from Propositions \ref{Prop_xl_not_in_c0} and \ref{Prop_xl_a-oo} it follows
that $X_{L}$ coincides isometrically with $l_{\infty }$, and hence it is a
r.i. Banach sequence lattice as well.

Proceeding with the case of $X_{U}$, observe that, by the assumption, $%
l_{\infty }$ fails to be finitely lattice representable in $X$, and so,
using Proposition \ref{Prop_xu_contractions}, we have that the maps $I_{n}$
are contractive with respect to these norms. Moreover, by Lemma \ref%
{Lemma_xln_propeties}, $\left\Vert \cdot \right\Vert _{X_{U}\left( n\right)
} $ is a lattice, symmetric norm for each positive integer $n.$ Finally,
from Proposition \ref{Prop_dia} it follows that $X$ satisfies a lower $p$%
-estimate for some $p<\infty .$ Hence, for every $n\in \mathbb{N}$ and any
sequences $\left\{ x_{i}\right\} _{i=1}^{n}\in \mathfrak{B}_{n}\left(
X\right) $ and $\left\{ a_{i}\right\} _{i=1}^{n}$ of scalars it follows 
\begin{equation*}
\left( \sum_{i=1}^{n}|a_{i}|^{p}\right) ^{1/p}\leq M_{\left[ p\right]
}\left( X\right) \left\Vert \sum_{i=1}^{n}a_{i}x_{i}\right\Vert _{X}\leq M_{%
\left[ p\right] }\left( X\right) \left\Vert
\sum_{i=1}^{n}a_{i}e_{i}\right\Vert _{X_{U}},
\end{equation*}%
i.e., $X_{U}$ is embedded into $l_{p}$ and thus also in $c_{0}.$ Thus,
applying Proposition \ref{Prop_main_Prop_xl_xu}, we conclude that $X_{U}$ is
a r.i. sequence Banach lattice.
\end{proof}

\vskip0.6cm

\section*{\label{Appendix}Appendix: A description of the optimal upper
sequence lattices for Orlicz spaces.}

Recall that, according to Example \ref{Lorentz}, optimal upper and lower
sequence lattices for the $L_{p,q}$-spaces are just some $l_{r}$-spaces. As
well known (see e.g. \cite{LT-73a,LT73}), comparing with the Lorentz spaces,
the structure of disjoint sequences in Orlicz spaces is much more
complicated. In particular, in general, an Orlicz space $L_{M}$ need not to
admit an upper $\delta (L_{M})$-estimate or a lower $\sigma (L_{M})$%
-estimate (as above, $\delta (X)$ and $\sigma (X)$ are the Grobler-Dodds
indices of a Banach lattice $X$). Therefore, we come to the problem of
identification of optimal sequence lattices for this class of r.i. spaces.
In this section, we present a description of optimal upper lattices for
separable Orlicz spaces as intersections of some special Musielak-Orlicz
sequence spaces. $\medskip $

We start with an assertion that reduces the consideration of issues related
to pairwise disjoint functions to that of a simpler case of multiples of
characteristic functions of pairwise disjoint sets.

\begin{proposition}
\label{Orlicz spaces} Let $M$ be an Orlicz function such that $M\in \Delta
_{2}^{\infty }$ with the constant $K$. For every $n\in \mathbb{N}$ and
arbitrary pairwise disjoint functions $y_{k}$, $k=1,\dots ,n$, there exist
two sequences $\{B_{k}\}_{k=1}^{n}$ and $\{B_{k}^{\prime }\}_{k=1}^{2n}$ of
pairwise disjoint subsets of $[0,1]$, $r_{k}\in \mathbb{R}$, $k=1,\dots ,n$,
and $r_{k}^{\prime }\in \mathbb{R}$, $k=1,\dots ,2n$, such that for the
functions $h_{k}:=r_{k}\chi _{B_{k}}$, $k=1,\dots ,n$, and $%
f_{k}:=r_{k}^{\prime }\chi _{B_{k}^{\prime }}$, $k=1,\dots ,2n$, we have 
\begin{equation}
\frac{1}{4}\Vert y_{k}\Vert _{L_{M}}\leq \Vert h_{k}\Vert _{L_{M}}\leq \Vert
y_{k}\Vert _{L_{M}},\;\;\frac{1}{2}\Vert y_{k}\Vert _{L_{M}}\leq \Vert
f_{k}\Vert _{L_{M}}\leq \frac{3}{2}\Vert y_{k}\Vert _{L_{M}},\;\;k=1,\dots
,n,  \label{suforlicz}
\end{equation}%
and 
\begin{equation}
\Big\|\sum_{k=1}^{n}h_{k}\Big\|_{L_{M}}\leq \Big\|\sum_{k=1}^{n}y_{k}\Big\|%
_{L_{M}}\leq (K+1)\Big\|\sum_{k=1}^{2n}f_{k}\Big\|_{L_{M}}.
\label{suffcondfororlicz}
\end{equation}
\end{proposition}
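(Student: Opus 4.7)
The plan is to construct the two families $\{h_k\}_{k=1}^{n}$ and $\{f_k\}_{k=1}^{2n}$ by separate arguments, both leveraging the $\Delta_2^\infty$ condition and the Luxemburg integral structure of $L_M$. Throughout, I write $\lambda := \Vert\sum_{k=1}^n y_k\Vert_{L_M}$, $a_k := \Vert y_k\Vert_{L_M}$, and $S_k := \mathrm{supp}\, y_k$. Since the $S_k$'s are pairwise disjoint in $[0,1]$, I will take $B_k, B_k', B_{n+k}' \subseteq S_k$, so pairwise disjointness across indices is automatic.

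For the lower family: since $M \in \Delta_2^\infty$, one has $\sum_{k=1}^n I_k = 1$ with $I_k := \int M(|y_k|/\lambda)$. For each $k$, I choose $r_k > 0$ and $B_k \subseteq S_k$ satisfying $M(r_k/\lambda)\, m(B_k) \le I_k$; summing gives $\sum_k M(r_k/\lambda)\, m(B_k) \le 1$, which by the definition of the Luxemburg norm forces $\Vert\sum h_k\Vert_{L_M} \le \lambda$. Setting equality $M(r_k/\lambda)\, m(B_k) = I_k$ leaves one degree of freedom in each pair $(r_k, m(B_k))$; within that one-parameter family I tune so that $\Vert h_k\Vert_{L_M} = r_k/M^{-1}(1/m(B_k))$ lands in $[a_k/4, a_k]$. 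The upper end $\le a_k$ uses the convexity bound $M(\alpha t) \le \alpha M(t)$ for $\alpha \in [0,1]$, while the lower end $\ge a_k/4$ uses the matching $\Delta_2^\infty$ estimate $M(\alpha t) \gtrsim \alpha^{\log_2 K} M(t)$; together these constrain the tuning window so as to yield the explicit constant $1/4$.

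For the upper family: for each $k$, I pick a threshold $r_k' > 0$ and split $S_k = B_k' \sqcup B_{n+k}'$ via $B_{n+k}' := \{|y_k| > r_k'\}$. Set $f_k := r_k' \chi_{B_k'}$ and $f_{n+k} := R_k' \chi_{B_{n+k}'}$, where $R_k'$ is a dyadic upper envelope of $|y_k|$ on $B_{n+k}'$, chosen so that $R_k' \le K r_k'$ by a single application of $\Delta_2^\infty$. Pointwise $|y_k| \le f_k + f_{n+k}$, so by the lattice property $\Vert\sum y_k\Vert_{L_M} \le \Vert\sum_{k=1}^{2n} f_k\Vert_{L_M}$, and the factor $K+1$ accounts for the dyadic-doubling step used to pass from $|y_k|$ on $B_{n+k}'$ to the single-level ceiling $R_k'$. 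The threshold $r_k'$ is tuned so that $\Vert f_k\Vert_{L_M} = r_k' \phi_{L_M}(m(B_k')) \in [a_k/2, 3a_k/2]$; continuity and monotonicity of $\phi_{L_M}$ make this tuning routine.

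The main obstacle is the joint tuning of $(r_k, m(B_k))$ in the lower construction: one must simultaneously enforce the individual upper bound $\Vert h_k\Vert_{L_M} \le a_k$, the individual lower bound $\Vert h_k\Vert_{L_M} \ge a_k/4$, and the sum bound $\sum_k M(r_k/\lambda)\, m(B_k) \le 1$, with the explicit constant $1/4$ independent of $K$. The upper construction is comparatively routine once the threshold $r_k'$ is chosen, since the pointwise domination $|y_k| \le f_k + f_{n+k}$ together with the one-step $\Delta_2^\infty$ control on $R_k'$ delivers the $K+1$ factor directly.
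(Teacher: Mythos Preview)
Both constructions in your sketch have genuine gaps.

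\textbf{Upper family.} Your claim that ``$R_k'$ is a dyadic upper envelope of $|y_k|$ on $B_{n+k}'$, chosen so that $R_k'\le K r_k'$ by a single application of $\Delta_2^\infty$'' does not make sense: the $\Delta_2^\infty$ condition controls $M(2t)/M(t)$, not the values of $y_k$. On $B_{n+k}'=\{|y_k|>r_k'\}$ the function $|y_k|$ may be arbitrarily large, so no single constant $R_k'$ can dominate it pointwise. Moreover, if you \emph{did} have $|y_k|\le f_k+f_{n+k}$ pointwise, the lattice property would give $\Vert\sum y_k\Vert\le\Vert\sum f_k\Vert$ with no extra factor at all, so your explanation of where the $(K+1)$ comes from is inconsistent with the argument preceding it. In the paper the upper family is \emph{not} obtained by pointwise domination: one first truncates $y_k$ at the level $c_k=\Vert y_k\Vert_{L_M}/(2\varphi_{L_M}(m(S_k)))$, keeps the large part $u_k$, and bounds the small part by $g_k=c_k\chi_{S_k\setminus\mathrm{supp}\,u_k}$; the $f_k$'s are then the $2n$ functions $\{h_k,g_k\}$, and the factor $K+1$ arises because passing from $u_k$ back to its single-step replacement $h_k$ costs a factor $K$ (via the $\Delta_2^\infty$ bound on an integral quotient, not via a pointwise inequality).

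\textbf{Lower family.} You fix the constraint $M(r_k/\lambda)\,m(B_k)=I_k$ and propose to tune within that one-parameter family so that $\Vert h_k\Vert_{L_M}\in[a_k/4,a_k]$. But you never verify that this interval is actually hit: the only tool you cite for the lower end is the $K$-dependent bound $M(\alpha t)\gtrsim\alpha^{\log_2 K}M(t)$, which cannot by itself produce the $K$-independent constant $1/4$. (For instance, if $M(t)=t^p$ then along your constraint $\Vert h_k\Vert_{L_M}=\lambda I_k^{1/p}$ is \emph{constant}, so there is nothing to tune; one must then check directly that $\lambda I_k^{1/p}\in[a_k/4,a_k]$, and your sketch gives no mechanism for this.) The paper avoids this by a different device: after the truncation above, it finds $r_k\in[c_k,\sup u_k]$ via the intermediate-value theorem so that
\[
M(r_k)=M\!\left(\frac{r_k}{\Vert u_k\Vert_{L_M}}\right)\int_0^1 M(u_k(t))\,dt,
\]
and then chooses $m(B_k)$ from this $r_k$; the $K$-independent constant $1/4$ comes cleanly from $\Vert u_k\Vert_{L_M}\ge\tfrac12\Vert y_k\Vert_{L_M}$ together with $r_k\varphi_{L_M}(m(B_k))\ge\tfrac12\Vert u_k\Vert_{L_M}$, the latter following from the specific choice of $c_k$.
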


\begin{proof}
Clearly, without loss of generality, we can assume that given functions $%
y_k\in L_M$, $k=1,\dots,n$, are positive. Moreover, since $%
M\in\Delta_2^\infty$, the space $L_M$ is separable, and, consequently, it
can be assumed also that $y_k$ are bounded functions.

For each $1\leq k\leq n$ we set 
\begin{equation*}
c_{k}:=\frac{\Vert y_{k}\Vert _{L_{M}}}{2\varphi _{L_{M}}(m(\mathrm{supp}%
\,y_{k}))},u_{k}(t):=%
\begin{cases}
y_{k}(t), & \mathrm{if}~y_{k}(t)\geq c_{k}, \\ 
0, & \mathrm{if}~y_{k}(t)<c_{k}%
\end{cases}%
\;\;\mbox{and}\;\;g_{k}(t):=c_{k}\chi _{\mathrm{supp}y_{k}\setminus \mathrm{%
supp}u_{k}}(t).
\end{equation*}%
Then, it follows 
\begin{equation}
\sum_{k=1}^{n}u_{k}\leq \sum_{k=1}^{n}y_{k}\leq
\sum_{k=1}^{n}u_{k}+\sum_{k=1}^{n}g_{k}.  \label{ineq1}
\end{equation}%
Observe also that 
\begin{equation}
\Vert g_{k}\Vert _{L_{M}}=c_{k}\varphi _{L_{M}}(m(\mathrm{supp}%
y_{k}\setminus \mathrm{supp}u_{k}))\leq \frac{1}{2}\Vert y_{k}\Vert
_{L_{M}},\;\;k=1,\dots ,n  \label{ineq2}
\end{equation}%
($\varphi _{L_{M}}$ is the fundamental function of $L_{M}$; see formula %
\eqref{fundamfunc}), and 
\begin{equation}
\frac{1}{2}\Vert y_{k}\Vert _{L_{M}}=\Vert y_{k}\Vert _{L_{M}}-\Vert
c_{k}\chi _{\mathrm{supp}y_{k}}\Vert _{L_{M}}\leq \Vert u_{k}\Vert
_{L_{M}}\leq \Vert y_{k}\Vert _{L_{M}},\;\;k=1,\dots ,n.  \label{ineq3}
\end{equation}

Next, we estimate the norm $\Vert \sum_{k=1}^{n}u_{k}\Vert _{L_{M}}$. To
this end, we show that there is $r_{k}\in \lbrack c_{k},\sup_{t}u_{k}(t)]$
such that 
\begin{equation}
M(r_{k})=M\left( \frac{r_{k}}{\Vert u_{k}\Vert _{L_{M}}}\right)
\int_{0}^{1}M(u_{k}(t))\,dt.  \label{equationforri}
\end{equation}%
Indeed, let us consider the function 
\begin{equation*}
H_{k}(t):=\frac{M(u_{k}(t))}{M\left( \frac{u_{k}(t)}{\Vert u_{k}\Vert
_{L_{M}}}\right) },~t\in \mathrm{supp}\,u_{k}.
\end{equation*}%
From the equality $\int_{0}^{1}M(\frac{u_{k}(t)}{\Vert u_{k}\Vert _{L_{M}}}%
)dt=1$ it follows that 
\begin{equation*}
\inf_{t\in \mathrm{supp}\,u_{k}}H_{k}(t)\leq \int_{0}^{1}M(u_{k}(t))\,dt\leq
\sup\limits_{t\in \mathrm{supp}\,u_{k}}H_{k}(t).
\end{equation*}%
Thus, since $\inf\limits_{t\in \mathrm{supp}\,u_{k}}u_{k}(t)\geq c_{k}$, by
continuity of $M$, equality \eqref{equationforri} holds for some $r_{k}$
such that $r_k\in [c_{k},\sup_{t}u_{k}(t)]$.

Further, assuming as we can that the functions $M$ and $\varphi_{L_M}$ are
strictly increasing, define the real numbers $d_k\in [0, 1]$, $k = 1,2,
\ldots, n$, as follows: 
\begin{equation*}
d_k: = 
\begin{cases}
\varphi_{L_M}^{-1}\left(\frac{\|u_k\|_{L_M}}{r_k}\right), & \mathrm{if}~
\|u_k\|_{L_M} \leq r_k \varphi_{L_M}(m(\mathrm{supp}\, y_k)), \\ 
m(\mathrm{supp}\, y_k), & \mathrm{if} ~ \|u_k\|_{L_M} > r_k \varphi_{L_M}(m(%
\mathrm{supp}\, y_k)).%
\end{cases}%
\end{equation*}
Clearly, the definition of $d_k$ implies that 
\begin{equation}  \label{riphidi2}
r_k\varphi_{L_M} (d_k) \leq \|u_k\|_{L_M}.
\end{equation}
Conversely, $r_k \varphi_{L_M}(d_k)=\|u_k\|_{L_M}$ if $\|u_k\|_{L_M} \leq
r_k \varphi_{L_M}(m(\mathrm{supp}\, y_k))$. Otherwise, since $r_k \geq c_k,$
in view of \eqref{ineq3} and the definition of $c_k$, we obtain 
\begin{equation*}
r_k \varphi_{L_M}(d_k) \geq c_k \varphi_{L_M}(m(\mathrm{supp}\,
y_k))\ge\frac12 \|u_k\|_{L_M}.
\end{equation*}
Thus, summing up, we conclude that 
\begin{equation}  \label{riphidi}
r_k \varphi_{L_M}(d_k) \geq \frac{1}{2} \|u_k\|_{L_M}.
\end{equation}

Now, observe that from inequality \eqref{riphidi} and formula %
\eqref{fundamfunc} for the function $\varphi_{L_M}$ it follows that 
\begin{equation*}
d_k \geq \varphi_{L_M}^{-1}( \|u_k\|_{L_M} /(2 r_k)).
\end{equation*}
and 
\begin{equation*}
\varphi_{L_M}^{-1}(u)=\frac{1}{M(1/u)},\;\;0<t\le 1,
\end{equation*}
respectively. Hence, taking into account that $M\in \Delta_2^\infty$ with
constant $K$ and applying (\ref{equationforri}), we obtain 
\begin{eqnarray}
d_k M(r_k) &\geq& \varphi_{L_M}^{-1}( \|u_k\|_{L_M} /(2 r_k))M(r_k)= \frac{%
M(r_k)}{M\left(\frac{2r_k}{\|u_k\|_{L_M}}\right)}  \notag \\
&\ge& \frac{1}{K} \frac{M(r_k)}{M\left(\frac{r_k}{\|u_k\|_{L_M}}\right)} = 
\frac{1}{K} \int\limits_0^1 M(u_k(t))\,dt.  \label{riphidi3}
\end{eqnarray}

In the converse direction, from the equality $1/d_{k}=M\left(
1/\varphi_{L_{M}}(d_{k})\right) $ (see \eqref{fundamfunc}), combined with ( %
\ref{riphidi2}) and \eqref{equationforri}, it follows 
\begin{equation}
d_{k}M(r_{k})=\frac{M(r_{k})}{M(\frac{1}{\varphi _{L_{M}}(d_{k})})}\leq 
\frac{M(r_{k})}{M\left( \frac{r_{k}}{\Vert u_{k}\Vert _{L_{M}}}\right) }%
=\int\limits_{0}^{1}M(u_{k}(t))\,dt.  \label{estimate41}
\end{equation}

Furthermore, by the definition of $d_{k}$, we have $d_{k}\leq m(\mathrm{supp}%
\,y_{k}) $. Therefore, we can define the following functions $%
h_{k}(t):=r_{k}\chi _{B_{k}}(t),$ where $B_{k}\subset \mathrm{supp}\,y_{k}$
and $m(B_{k})=d_{k}.$ Since $\Vert h_{k}\Vert _{L_{M}}=r_{k}\varphi
_{L_{M}}(d_{k})$, according to \eqref{riphidi2} and \eqref{riphidi}, it
holds 
\begin{equation*}
\frac{1}{2}\Vert u_{k}\Vert _{L_{M}}\leq \Vert h_{k}\Vert _{L_{M}}\leq \Vert
u_{k}\Vert _{L_{M}},\;\;k=1,2,\ldots ,n.
\end{equation*}%
Hence, from \eqref{ineq3} it follows 
\begin{equation}
\frac{1}{4}\Vert y_{k}\Vert _{L_{M}}\leq \Vert h_{k}\Vert _{L_{M}}\leq \Vert
y_{k}\Vert _{L_{M}},\;\;k=1,2,\ldots ,n.  \label{estimate41a}
\end{equation}

Moreover, since the functions $h_{k}$ (respectively, $u_{k}$) are pairwise
disjoint, in view of estimate \eqref{estimate41}, we conclude that 
\begin{eqnarray*}
\int_{0}^{1}M\left( \sum_{k=1}^{n}h_{k}(t)\right) \,dt
&=&\sum_{k=1}^{n}d_{k}M(r_{k})\leq \sum_{k=1}^{n}\int_{0}^{1}M(u_{k}(t))\,dt
\\
&\leq &\int_{0}^{1}M\left( \sum_{k=1}^{n}y_{k}(t)\right) \,dt.
\end{eqnarray*}%
Conversely, by \eqref{riphidi3}, we have 
\begin{equation*}
\sum_{k=1}^{n}\int_{0}^{1}M(u_{k}(t))\,dt\leq K\int_{0}^{1}M\left(
\sum_{k=1}^{n}h_{k}(t)\right) \,dt.
\end{equation*}%
Therefore, since $M$ is convex and $K\geq 1$, it follows that 
\begin{equation*}
\Big\|\sum_{k=1}^{n}h_{k}\Big\|_{L_{M}}\leq \Big\|\sum_{k=1}^{n}u_{k}\Big\|%
_{L_{M}}\leq K\Big\|\sum_{k=1}^{n}h_{k}\Big\|_{L_{M}}.
\end{equation*}%
Noting that the collection $\{g_{k},h_{k}\}_{k=1}^{n}$ consists of $2n$
pairwise disjoint functions, we relabel them as $f_{k}$, $k=1,2,\dots ,2n$.
Then, by \eqref{ineq1} and the last inequality, we obtain 
\begin{eqnarray*}
\Big\|\sum_{k=1}^{n}h_{k}\Big\|_{L_{M}}\leq \Big\|\sum_{k=1}^{n}y_{k}\Big\|%
_{L_{M}} &\leq &\Big\|\sum_{k=1}^{n}u_{k}\Big\|_{L_{M}}+\Big\|%
\sum_{k=1}^{n}g_{k}\Big\|_{L_{M}} \\
&\leq &K\Big\|\sum_{k=1}^{n}h_{k}\Big\|_{L_{M}}+\Big\|\sum_{k=1}^{n}g_{k}%
\Big\|_{L_{M}} \\
&\leq &(K+1)\Big\|\sum_{k=1}^{2n}f_{k}\Big\|_{L_{M}},
\end{eqnarray*}%
and hence \eqref{suffcondfororlicz} is proved. Since inequalities %
\eqref{suforlicz} follow from \eqref{estimate41a} and \eqref{ineq2}, the
proof is completed.
\end{proof}

From Proposition \ref{Orlicz spaces} and its proof we obtain

\begin{corollary}
\label{cor4} Let $M$ be an Orlicz function such that $M\in \Delta
_{2}^{\infty }$. Then, 
\begin{equation*}
\Vert a\Vert _{(L_{M})_{U}}\asymp \sup \left\{ \Big\|\sum_{k=1}^{n}a_{k}%
\frac{\chi _{F_{k}}}{\varphi _{L_{M}}(m(F_{k}))}\Big\|_{L_{M}}:\,n\in 
\mathbb{N},F_{k}\subset \lbrack 0,1]\;\mbox{pairwise disjoint}\right\} ,
\end{equation*}%
with constants independent of $a=(a_{k})_{k=1}^{\infty }$.
\end{corollary}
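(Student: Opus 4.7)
The plan is to derive the equivalence as a direct consequence of Proposition \ref{Orlicz spaces}, exploiting the fact that each normalized characteristic function $\chi_{F_k}/\varphi_{L_M}(m(F_k))$ already has unit $L_M$-norm. Throughout, denote the supremum on the right-hand side by $S(a)$.

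For the easy direction $S(a) \leq \|a\|_{(L_M)_U}$, I would simply note that by formula \eqref{fundamfunc} we have $\|\chi_{F_k}/\varphi_{L_M}(m(F_k))\|_{L_M} = 1$, so whenever $\{F_k\}_{k=1}^n$ are pairwise disjoint the sequence $\{\chi_{F_k}/\varphi_{L_M}(m(F_k))\}_{k=1}^n$ belongs to $\mathfrak{B}_n(L_M)$. By the very definition of $\|\cdot\|_{(L_M)_U(n)}$ this gives $\|\sum_k a_k \chi_{F_k}/\varphi_{L_M}(m(F_k))\|_{L_M} \leq \|a\|_{(L_M)_U(n)} \leq \|a\|_{(L_M)_U}$, and taking the supremum over the configurations delivers the inequality.

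For the reverse direction, I would fix an arbitrary $\{x_k\}_{k=1}^n \in \mathfrak{B}_n(L_M)$ and scalars $a_1,\dots,a_n$, and apply Proposition \ref{Orlicz spaces} to the pairwise disjoint functions $y_k := a_k x_k$ (so $\|y_k\|_{L_M}=|a_k|$). This yields pairwise disjoint characteristic-function multiples $h_j = r_j \chi_{B_j}$ and $g_j = c_j \chi_{C_j}$, both supported in $\mathrm{supp}(x_j)$ with $B_j \cap C_j = \emptyset$, whose relabeling is exactly the $\{f_k\}_{k=1}^{2n}$ of the proposition. The proposition then furnishes the chain
\begin{equation*}
\Big\|\sum_{k=1}^{n} a_k x_k\Big\|_{L_M} \leq (K+1)\Big\|\sum_{k=1}^{2n} f_k\Big\|_{L_M} \leq (K+1)\Big(\Big\|\sum_{j=1}^n h_j\Big\|_{L_M}+\Big\|\sum_{j=1}^n g_j\Big\|_{L_M}\Big),
\end{equation*}
together with the sharper norm bounds $\|h_j\|_{L_M}, \|g_j\|_{L_M} \leq |a_j|$ implicit in the construction of \eqref{suforlicz}.

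At this point I would rewrite $h_j = \|h_j\|_{L_M} \cdot \chi_{B_j}/\varphi_{L_M}(m(B_j))$ (and analogously for $g_j$), and then use the lattice-norm identity $\bigl\|\sum_j t_j \chi_{B_j}/\varphi_{L_M}(m(B_j))\bigr\|_{L_M}$ is monotone in $|t_j|$ together with $|a_j|\chi_{B_j} = |a_j\chi_{B_j}|$ (so that passing from $\|h_j\|_{L_M}$ up to $|a_j|$ and inserting signs $\mathrm{sgn}(a_j)$ does not change the norm) to conclude that each of the two sums on the right is bounded above by $S(a)$. This gives $\|\sum_k a_k x_k\|_{L_M} \leq 2(K+1)S(a)$, and taking suprema over $\{x_k\} \in \mathfrak{B}_n(L_M)$ and over $n$ yields $\|a\|_{(L_M)_U} \leq 2(K+1) S(a)$, which together with the easy direction gives the claimed equivalence with constants depending only on the $\Delta_2^\infty$-constant $K$.

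The only delicate point I anticipate is the bookkeeping in the last step: Proposition \ref{Orlicz spaces} produces $2n$ characteristic-function terms that come in disjoint pairs $(h_j,g_j)$ both indexed by the same ``parent'' $j$, whereas the right-hand side of the corollary expects configurations indexed $1,\dots,N$ with the coefficient $a_k$ attached to the $k$-th set. Splitting the single sum into the two pieces $\sum_j h_j$ and $\sum_j g_j$ solves this cleanly, at the cost of the factor $2$ in the final constant $2(K+1)$.
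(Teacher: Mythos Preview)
Your argument is correct and is precisely the one the paper has in mind: the authors simply write ``From Proposition \ref{Orlicz spaces} and its proof we obtain'' and leave the reader to carry out exactly the two steps you describe. Your handling of the bookkeeping issue---splitting the $2n$ step functions back into the two families $\{h_j\}$ and $\{g_j\}$, each indexed by the original $j$ and satisfying $\|h_j\|_{L_M},\|g_j\|_{L_M}\le |a_j|$ (the latter bound coming from \eqref{ineq2} in the proof rather than from \eqref{suforlicz})---is the intended route, and the resulting constant $2(K+1)$ is fine.
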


Recalling that the space $(L_{M})_{U}$ is rearrangement invariant (see
Theorem \ref{Th_XL_XU_Prop}), denote by $\phi_U$ its fundamental function,
i.e., $\phi_{U}(n):=\|\sum_{k=1}^{n}e_{k}\|_{(L_{M})_{U}}$, $n\in\mathbb{N}$%
. Also, let ${\Phi}_{g}$ be the dilation function of a function $%
g:\,(0,\infty )\to (0,\infty )$ for large values of arguments defined by 
\begin{equation*}
{\Phi}_{g}(u):=\sup_{v\geq \max (1,1/u)}\frac{g(vu)}{g(v)},\;\;u>0.
\end{equation*}

\begin{corollary}
\label{cor5} Let $M$ be an Orlicz function such that $M\in \Delta
_{2}^{\infty }$. Then, 
\begin{equation*}
\phi _{U}(n)\asymp {\Phi }_{M^{-1}}(n),\;\;n\in \mathbb{N},
\end{equation*}%
where $M^{-1}$ is the inverse function for $M$.
\end{corollary}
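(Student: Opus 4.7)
The plan is to start from Corollary \ref{cor4}, which expresses the $(L_M)_U$-norm as a supremum of $L_M$-norms of step functions built from indicators of disjoint sets. Specializing to the fundamental function, that is, taking the sequence $a = (1,1,\dots,1,0,0,\dots)$ with $n$ ones, and setting $s_k := m(F_k)$, the identity $\varphi_{L_M}(s) = 1/M^{-1}(1/s)$ from \eqref{fundamfunc} gives
\begin{equation*}
\phi_U(n) \;\asymp\; \sup\Big\{\Big\|\sum_{k=1}^n M^{-1}(1/s_k)\,\chi_{F_k}\Big\|_{L_M} : F_k \subset [0,1]\ \text{disjoint},\ m(F_k)=s_k\Big\}.
\end{equation*}
By the definition of the Luxemburg norm, this reduces to understanding, for disjoint $F_k$,
\begin{equation*}
I(\lambda) := \sum_{k=1}^n s_k\, M\bigl(M^{-1}(1/s_k)/\lambda\bigr),
\end{equation*}
and the norm equals $\inf\{\lambda : I(\lambda)\le 1\}$.

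For the lower bound $\phi_U(n)\succeq \Phi_{M^{-1}}(n)$, I would specialize to the case $s_1=\cdots=s_n=s$ with $s\le 1/n$. Then $I(\lambda)=1$ is equivalent to $M(M^{-1}(1/s)/\lambda) = 1/(ns)$, hence $\lambda = M^{-1}(1/s)/M^{-1}(1/(ns))$. Putting $v := 1/(ns)\ge 1$, this gives $\lambda = M^{-1}(nv)/M^{-1}(v)$. Taking the supremum over $s\in (0,1/n]$, i.e.\ over $v\ge 1$, produces exactly $\Phi_{M^{-1}}(n)$ (since $\max(1,1/n)=1$ for $n\ge 1$), so that branch of disjoint sets alone contributes $\succeq \Phi_{M^{-1}}(n)$ to the supremum.

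For the upper bound, set $\lambda := \Phi_{M^{-1}}(n)$. The defining inequality $M^{-1}(nv)\le \lambda\, M^{-1}(v)$ for $v\ge 1$, rewritten with $u = M^{-1}(v)$, becomes
\begin{equation*}
n\,M(u) \le M(\lambda u)\quad\text{for all } u\ge 1,\qquad \text{i.e.,}\qquad M(u/\lambda)\le M(u)/n\ \text{for } u\ge \lambda.
\end{equation*}
Given arbitrary disjoint $F_k$ with measures $s_k$, I would split the index set at the threshold $s_k = 1/M(\lambda)$ (equivalently, $M^{-1}(1/s_k)=\lambda$). On the ``small-$s_k$'' part, where $M^{-1}(1/s_k)\ge\lambda$, the displayed inequality yields $s_k M(M^{-1}(1/s_k)/\lambda)\le s_k\cdot (1/s_k)/n = 1/n$, contributing at most $1$ to $I(\lambda)$. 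On the ``large-$s_k$'' part, where $M^{-1}(1/s_k)<\lambda$, the convexity of $M$ with $M(0)=0$ gives $M(t)\le t$ for $t\in [0,1]$, so $s_k M(M^{-1}(1/s_k)/\lambda)\le s_k$, whose sum is bounded by $\sum_k s_k\le 1$. Hence $I(\lambda)\le 2$, and by convexity $I(2\lambda)\le I(\lambda)/2 \le 1$, giving $\|\sum_k M^{-1}(1/s_k)\chi_{F_k}\|_{L_M}\le 2\lambda = 2\,\Phi_{M^{-1}}(n)$, uniformly in the choice of $\{F_k\}$.

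The main subtlety I expect is the case split in the upper bound: one has to handle sets $F_k$ of essentially arbitrary sizes (only constrained by $\sum s_k\le 1$), so the ``large-$s_k$'' branch where the dilation-function bound is unavailable has to be controlled by a separate, softer convexity argument. Combining both bounds with the $\Delta_2^\infty$-absorbed constant from Corollary \ref{cor4} yields the claimed equivalence $\phi_U(n)\asymp \Phi_{M^{-1}}(n)$.
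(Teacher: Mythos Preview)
Your argument is correct and is essentially the paper's own proof: the lower bound is obtained by testing on $n$ equal-measure sets $m(F_k)=1/(nv)$, and the upper bound by showing $I(\Phi_{M^{-1}}(n))\le 2$ via a two-piece split, each piece bounded by $1$. The only cosmetic difference is your threshold $M^{-1}(1/s_k)\gtrless\lambda$ versus the paper's $m(F_k)\gtrless 1/n$; note that your claim ``$M(t)\le t$ on $[0,1]$ by convexity and $M(0)=0$'' tacitly uses the paper's standing normalization $M(1)=1$ (and in fact only $M(t)\le 1$ for $t\le 1$ is needed at that step).
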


\begin{proof}
By Corollary \ref{cor4}, we have 
\begin{equation}  \label{estimate42}
\phi_U(n)\asymp \sup\left\{\Big\|\sum_{k=1}^n \frac{\chi_{F_k}}{%
\varphi_{L_M}(m(F_k))}\Big\|_{L_M}:\,F_k\subset [0,1]\;%
\mbox{are pairwise
disjoint}\right\}.
\end{equation}

Let $n\in\mathbb{N}$ and let $F_k\subset [0,1]$, $k=1,\dots, n$, be pairwise
disjoint. Then, by formula \eqref{fundamfunc}, 
\begin{equation*}
\Big\|\sum_{k=1}^n \frac{\chi_{F_k}}{\varphi_{L_M}(m(F_k))}\Big\|%
_{L_M}=\inf\left\{\lambda>0:\,\sum_{k=1}^n M\left(\frac{M^{-1}(1/m(F_k))}{%
\lambda}\right)m(F_k)\le 1\right\}.
\end{equation*}
Next, we write 
\begin{eqnarray*}
\sum_{k=1}^n M\left(\frac{M^{-1}(1/m(F_k))}{{\Phi}_{M^{-1}}(n)}%
\right)m(F_k)&=&\sum_{k:\,m(F_k)\ge 1/n} M\left(\frac{M^{-1}(1/m(F_k))}{{\Phi%
}_{M^{-1}}(n)}\right)m(F_k) \\
&+&\sum_{k:\,m(F_k)<1/n} M\left(\frac{M^{-1}(1/m(F_k))}{{\Phi}_{M^{-1}}(n)}%
\right)m(F_k) \\
&=& (I)+(II).
\end{eqnarray*}
Observe that 
\begin{equation*}
(I)\le \sum_{k:\,m(F_k)\ge 1/n} M\left(\frac{M^{-1}(n)}{{\Phi}_{M^{-1}}(n)}%
\right)m(F_k)\le M(1)=1
\end{equation*}
and 
\begin{equation*}
(II)\le \sum_{k:\,m(F_k)<1/n} M\left(\frac{%
M^{-1}(1/(m(F_k)n))M^{-1}(1/m(F_k))}{M^{-1}(1/m(F_k))}\right)m(F_k)=\sum_{k:%
\,m(F_k)<1/n}\frac1n\le 1.
\end{equation*}
Summing up, we obtain 
\begin{equation*}
\Big\|\sum_{k=1}^n \frac{\chi_{F_k}}{\varphi_{L_M}(m(F_k))}\Big\|_{L_M}\le 2{%
\Phi}_{M^{-1}}(n),
\end{equation*}
for every $n\in\mathbb{N}$ and all pairwise disjoint $F_k\subset [0,1]$, $%
k=1,\dots,n$. Consequently, in view of \eqref{estimate42}, it follows 
\begin{equation*}
\phi_U(n)\preceq {\Phi}_{M^{-1}}(n),\;\;n\in\mathbb{N}.
\end{equation*}

Conversely, without loss of generality, assume that 
\begin{equation*}
{\Phi}_{M^{-1}}(n)=\frac{M^{-1}(nv_n)}{M^{-1}(v_n)}
\end{equation*}
for some $v_n\ge 1$. Let $F_k\subset [0,1]$, $k=1,\dots, n$, be arbitrary
pairwise disjoint subsets of $[0,1]$ such that $m(F_k)=(nv_n)^{-1}$. Then, 
\begin{eqnarray*}
\phi_U(n)\succeq \Big\|\sum_{k=1}^n \frac{\chi_{F_k}}{\varphi_{L_M}(m(F_k))}%
\Big\|_{L_M}&=&\inf\left\{\lambda>0:\,M\left(\frac{M^{-1}(nv_n)}{\lambda}%
\right)\frac{1}{v_n}\le 1\right\} \\
&=&\frac{M^{-1}(nv_n)}{M^{-1}(v_n)}={\Phi}_{M^{-1}}(n).
\end{eqnarray*}
\end{proof}

Recall that a family of Banach spaces $\{X_\alpha\}_{\alpha \in \mathcal{A}}$
forms a \textit{strongly compatible scale} if there exists a Banach space $%
\tilde X$ such that $X_\alpha\overset{1}{\hookrightarrow}\tilde X$, $\alpha
\in \mathcal{A}$.

Let $\{X_\alpha\}_{\alpha \in \mathcal{A}}$ be a strongly compatible scale.
We set 
\begin{equation*}
\Delta (X_\alpha)_{\alpha \in \mathcal{A}}:= \{x\in \cap _{\alpha \in 
\mathcal{A}}X_\alpha:\ \|x\|_{\Delta (X_\alpha)}:= \sup _{\alpha \in 
\mathcal{A}}\|x\|_{X_\alpha}<\infty\}.
\end{equation*}
Then, $(\Delta (X_\alpha)_{\alpha \in \mathcal{A}}, \|\cdot \|_{\Delta
(X_\alpha)})$ is a Banach space with the following properties:

(i) $\Delta (X_\alpha)_{\alpha \in \mathcal{A}}\overset{1}{\hookrightarrow}
X_\alpha$, $\forall \alpha \in \mathcal{A}$;

(ii) If $F$ is a Banach space such that $F\overset{1}{\hookrightarrow}
X_\alpha$, $\forall\alpha \in \mathcal{A}$, then $F\overset{1}{%
\hookrightarrow}\Delta (X_\alpha)_{\alpha \in \mathcal{A}}$. $\smallskip$

Let $M$ be an Orlicz function, $M_v (u):=M(uv)/M(v)$, $u\ge 0$, $v>0$. We
consider the family of the Musielak-Orlicz sequence spaces $\{l_{M_{\bar{%
\beta}}}\}_{\bar{\beta}\in \mathcal{B}}$, where $\mathcal{B}$ is the set of
all sequences $\bar{\beta}=(\beta_k)_{k=1}^\infty$ such that $%
\sum_{k=1}^\infty{1}/{M(\beta_k)}\le 1$. Recall that the norm of the
Musielak-Orlicz sequence space $l_{M_{\bar{\beta}}}$ is defined by 
\begin{equation*}
\|(a_k)\|_{l_{M_{\bar{\beta}}}}:=\Big\{\lambda>0:\,\sum_{k=1}^\infty
M_{\beta_k}\Big(\frac{a_k}{\lambda}\Big)\le 1\Big\}
\end{equation*}
(see e.g. \cite{Mus,Woo}). One can easily check that $l_{M_{\bar{\beta}}}%
\overset{1}{\hookrightarrow} l_\infty$, for each $\bar{\beta}\in \mathcal{B}%
, $ and hence this family is a strongly compatible scale.

\begin{theorem}
\label{Th. Oricz} For every Orlicz function $M$ such that $M\in \Delta
_{2}^{\infty }$ we have $(L_{M})_{U}=\Delta (l_{M_{\bar{\beta}}})_{{\bar{%
\beta}}\in \mathcal{B}}$ (with equivalence of norms). Moreover, the
following embeddings hold: 
\begin{equation}
l_{\Phi _{M}}\hookrightarrow (L_{M})_{U}\hookrightarrow l_{p_{M}},
\label{embedd}
\end{equation}%
where $\Phi _{M}$ is the dilation function of $M$ for large values of
arguments and 
\begin{equation*}
p_{M}:=\sup \{p\geq 1:\,M(uv)\leq Cu^{p}M(v)\;\;\mbox{for some}\;\;C>0\;\;%
\mbox{and
all}\;\;0<u\leq 1,uv\geq 1\}.
\end{equation*}
\end{theorem}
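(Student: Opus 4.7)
The plan is to reduce everything to the characterization of $\|\cdot\|_{(L_M)_U}$ furnished by Corollary \ref{cor4} and then recognise the resulting supremum as a Musielak-Orlicz construction. Concretely, for pairwise disjoint $F_1,\dots,F_n\subset[0,1]$ of positive measure, I set $\beta_k:=M^{-1}(1/m(F_k))$; using $\varphi_{L_M}(t)=1/M^{-1}(1/t)$, a Luxemburg-norm computation gives
\begin{equation*}
\Big\|\sum_{k=1}^n a_k\frac{\chi_{F_k}}{\varphi_{L_M}(m(F_k))}\Big\|_{L_M}
=\inf\Big\{\lambda>0:\sum_{k=1}^n\frac{M(|a_k|\beta_k/\lambda)}{M(\beta_k)}\le 1\Big\}
=\|a\|_{l_{M_{\bar{\beta}}}},
\end{equation*}
while the disjointness constraint $\sum_k m(F_k)\le 1$ becomes $\sum_k 1/M(\beta_k)\le 1$, i.e.\ $\bar\beta\in\mathcal{B}$ after completion by $\beta_k=+\infty$ (with the convention $1/M(\infty)=0$, which kills the corresponding coordinate and causes no harm). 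Conversely, every $\bar\beta\in\mathcal{B}$ arises from disjoint $F_k\subset[0,1]$ with $m(F_k)=1/M(\beta_k)$. Combining this identification with Corollary \ref{cor4} yields $\|a\|_{(L_M)_U}\asymp\sup_{\bar\beta\in\mathcal{B}}\|a\|_{l_{M_{\bar{\beta}}}}=\|a\|_{\Delta(l_{M_{\bar{\beta}}})}$, which is the main assertion.

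For the embedding $l_{\Phi_M}\hookrightarrow(L_M)_U$, it suffices to prove $l_{\Phi_M}\hookrightarrow l_{M_{\bar{\beta}}}$ with constant independent of $\bar\beta\in\mathcal{B}$. The pointwise estimate for each index splits into two cases: when $|a_k|\beta_k\ge 1$, the definition of $\Phi_M$ with $v=\beta_k\ge\max(1,1/|a_k|)$ gives $M_{\beta_k}(|a_k|)\le\Phi_M(|a_k|)$; when $|a_k|\beta_k<1$, monotonicity of $M$ gives $M_{\beta_k}(|a_k|)\le 1/M(\beta_k)$. Summing over $k$ and using $\|a\|_{l_{\Phi_M}}\le 1$ together with $\bar\beta\in\mathcal{B}$ yields $\sum_k M_{\beta_k}(|a_k|)\le 2$, and convexity of $M_{\bar\beta}$ then gives $\|a\|_{l_{M_{\bar{\beta}}}}\le 2$. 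For the embedding $(L_M)_U\hookrightarrow l_{p_M}$, I would invoke Proposition \ref{coincidence with lp}(i), which gives $(L_M)_U\overset{1}{\hookrightarrow}l_{\delta(L_M)}$, combined with the standard identification $\delta(L_M)=p_M$: the Matuszewska-type inequality $M(uv)\le Cu^p M(v)$ on $\{u\le 1,\,uv\ge 1\}$ is precisely equivalent (using $\Delta_2^\infty$) to an upper $p$-estimate on pairwise disjoint elements of $L_M$, via the reduction to characteristic functions given by Proposition \ref{Orlicz spaces}, so the two suprema defining $\delta(L_M)$ and $p_M$ coincide.

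The main technical subtlety, and what I would take most care to execute cleanly, is the passage from the finite disjoint families appearing in Corollary \ref{cor4} to the infinite sequences in $\mathcal{B}$, and from finitely supported $a$ to the general case. The former is handled by the $\beta_k=+\infty$ convention above; the latter uses order semi-continuity of both $(L_M)_U$ (Lemma \ref{Lemma semi-continuous}) and the intersection lattice $\Delta(l_{M_{\bar{\beta}}})_{\bar\beta\in\mathcal{B}}$, so one approximates by finitely supported truncations and passes to the limit.
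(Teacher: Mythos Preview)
Your proposal is correct and follows essentially the same route as the paper: the identification $(L_M)_U=\Delta(l_{M_{\bar\beta}})_{\bar\beta\in\mathcal B}$ via Corollary~\ref{cor4} and the substitution $\beta_k=M^{-1}(1/m(F_k))$, the case-split $|a_k|\beta_k\lessgtr 1$ to get $\sum_k M_{\beta_k}(|a_k|)\le 2$ for the left embedding, and Proposition~\ref{coincidence with lp}(i) together with $\delta(L_M)=p_M$ for the right one. The only cosmetic difference is that the paper obtains $\delta(L_M)=p_M$ by citing \cite{KMP97}, whereas you sketch a direct argument through Proposition~\ref{Orlicz spaces}; both are fine.
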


\begin{proof}
Without loss of generality, we will assume that $a_k\ge 0$, $k=1,2,\dots$.

Let $F_{k}\subset \lbrack 0,1]$, $k=1,2,\dots $, be arbitrary pairwise
disjoint sets of positive measure. Then, setting $\beta
_{k}:=M^{-1}(1/m(F_{k}))$, $k=1,2,\dots $, we see that 
\begin{equation*}
\sum_{k=1}^{\infty }\frac{1}{M(\beta _{k})}=\sum_{k=1}^{\infty }m(F_{k})\leq
1,
\end{equation*}%
and hence ${\bar{\beta}}=(\beta _{k})_{k=1}^{\infty }\in \mathcal{B}$.
Conversely, for each ${\bar{\beta}}=(\beta _{k})_{k=1}^{\infty }\in \mathcal{%
B}$ we can find pairwise disjoint sets $F_{k}\subset \lbrack 0,1]$, $%
m(F_{k})>0$, such that $\beta _{k}=M^{-1}(1/m(F_{k}))$, $k=1,2,\dots $ These
observations together with Corollary \ref{cor4} imply, for every $%
a=\{a_{k}\}_{k=1}^{\infty }$, the following: 
\begin{eqnarray*}
\Vert a\Vert _{(L_{M})_{U}} &\asymp &\sup \left\{ \Big\|\sum_{k=1}^{n}a_{k}%
\frac{\chi _{F_{k}}}{\varphi _{L_{M}}(m(F_{k}))}\Big\|_{L_{M}}:\,n\in 
\mathbb{N},F_{k}\subset \lbrack 0,1]\;\mbox{pairwise disjoint}\right\} \\
&=&\sup_{n\in \mathbb{N},F_{k}\mbox{\small{pairwise disjoint}}}\inf \left\{
\lambda >0:\,\sum_{k=1}^{n}M\left( \frac{a_{k}M^{-1}({1}/{m(F_{k})})}{%
\lambda }\right) m(F_{k})\leq 1\right\} \\
&=&\sup_{\bar{\beta} \in \mathcal{B},n\in \mathbb{N}}\inf \left\{ \lambda
>0:\,\sum_{k=1}^{n}\frac{M\left( \frac{a_{k}}{\lambda }\beta _{k}\right) }{%
M(\beta _{k})}\leq 1\right\} \\
&=&\sup_{\bar{\beta} \in \mathcal{B},n\in \mathbb{N}}\inf \left\{ \lambda
>0:\,\sum_{k=1}^{n}M_{\beta _{k}}(a_{k}/\lambda )\leq 1\right\} \\
&=&\Vert a\Vert _{\Delta (l_{M_{\bar{\beta} }})_{\bar{\beta} \in \mathcal{B}%
}}.
\end{eqnarray*}%
Thus, $(L_{M})_{U}=\Delta (l_{M_{\bar{\beta}}})_{\bar{\beta}\in \mathcal{B}}$%
, with equivalence of norms.

To prove the left-hand side embedding in \eqref{embedd} we assume that $%
\Vert (a_{k})\Vert _{l_{\Phi _{M}}}\leq 1$. This implies that $%
\sum_{k=1}^{n}\Phi _{M}(a_{k})\leq 1$ for every $n\in \mathbb{N}$. Then, for
each sequence $\bar{\beta}=(\beta _{k})_{k=1}^{\infty }\in \mathcal{B}$ we
have 
\begin{equation*}
\sum_{k=1}^{n}\frac{M\left( {a_{k}}\beta _{k}\right) }{M(\beta _{k})}%
=\sum_{k:\;a_{k}\beta _{k}\leq 1}\frac{M\left( {a_{k}}\beta _{k}\right) }{%
M(\beta _{k})}+\sum_{k:\;a_{k}\beta _{k}>1}\frac{M\left( {a_{k}}\beta
_{k}\right) }{M(\beta _{k})}\leq \sum_{k=1}^{n}\frac{1}{M(\beta _{k})}%
+\sum_{k=1}^{n}\Phi _{M}(a_{k})\leq 2.
\end{equation*}%
Thus, $\Vert (a_{k})\Vert _{l_{M_{\bar{\beta}}}}\leq 2$ for every $\bar{\beta%
}=(\beta _{k})_{k=1}^{\infty }\in \mathcal{B}$ and therefore, by the first
assertion of the theorem, it follows 
\begin{equation*}
\Vert (a_{k})\Vert _{(L_{M})_{U}}\leq C\Vert (a_{k})\Vert _{\Delta (l_{M_{%
\bar{\beta}}})_{\bar{\beta}\in \mathcal{B}}}\leq 2C.
\end{equation*}

Furthermore, by \cite{KMP97}, $p_M$ is the supremum of the set of all $p\ge
1 $ such that $M$ is equivalent to a $p$-convex function on the interval $%
[1,\infty)$, or equivalently $p_M$ is the supremum of the set of all $p\ge 1$
such that the Orlicz space $L_M[0,1]$ admits an upper $p$-estimate. Thus, by
using the notation of this paper, we have $p_M=\delta(L_M[0,1])$ and hence
the right-hand side embedding in \eqref{embedd} is a consequence of
Proposition \ref{coincidence with lp}(i). This completes the proof.
\end{proof}

\begin{remark}
Informally, the Orlicz space $l_{\Phi _{M}}$ is located rather "close" to
the space $(L_{M})_{U}$, because the fundamental functions of these spaces
are equivalent. Indeed, let $n\in \mathbb{N}$ and $\varepsilon >0$ be
arbitrary. Then, by definition, $\phi _{l_{\Phi _{M}}}(n)=1/u_{n}$, where $%
u_{n}$ satisfies the conditions: 
\begin{equation*}
\frac{M(u_{n}v_{n})}{M(v_{n})}\geq (1-\varepsilon )\frac{1}{n}\;\;%
\mbox{for
some}\;\;v_{n}\geq 1/u_{n}\;\;\mbox{and}\;\;\frac{M(u_{n}v)}{M(v)}\leq \frac{%
1}{n}\;\;\mbox{for
all}\;\;v\geq 1/u_{n}.
\end{equation*}%
In particular, from the last estimate it follows that $M(1/u_{n})\geq n$.
Therefore, since $M^{-1}$ is concave, we get 
\begin{equation*}
u_{n}\geq \frac{M^{-1}((1-\varepsilon )s_{n}/n)}{M^{-1}(s_{n})}\geq
(1-\varepsilon )\frac{M^{-1}(s_{n}/n)}{M^{-1}(s_{n})},\;\;\mbox{where}%
\;\;s_{n}=M(v_{n})\geq M(1/u_{n}),
\end{equation*}%
and 
\begin{equation*}
u_{n}\leq \frac{M^{-1}(s/n)}{M^{-1}(s)},\;\;\mbox{for all}\;\;s\geq
M(1/u_{n}).
\end{equation*}%
Thus, 
\begin{equation*}
\frac{1}{\phi _{l_{\Phi _{M}}}(n)}=\inf_{s\geq M(1/u_{n})}\frac{M^{-1}(s/n)}{%
M^{-1}(s)},
\end{equation*}%
whence 
\begin{equation*}
\phi _{l_{\Phi _{M}}}(n)=\sup_{s\geq M(1/u_{n})}\frac{M^{-1}(s)}{M^{-1}(s/n)}%
=\sup_{t\geq M(1/u_{n})/n}\frac{M^{-1}(tn)}{M^{-1}(t)}.
\end{equation*}%
Consequently, since $M(1/u_{n})/n\geq 1$, by Corollary \ref{cor5}, we have 
\begin{equation*}
\phi _{l_{\Phi _{M}}}(n)\leq \mathcal{M}_{M^{-1}}(n)\preceq \phi
_{(L_{M})_{U}}(n),\;\;n\in \mathbb{N}.
\end{equation*}%
It remains to note that the opposite inequality follows from the left-hand
side embedding \eqref{embedd}.
\end{remark}

\vskip0.5cm

\textbf{Data availability statement.} \textit{All data generating or
analysed during this study are included in this published article.}

\end{document}